\title{Operator level limit of the circular Jacobi $\beta$-ensemble}
\date{}
\author{Yun Li and Benedek Valk\'o}
\newtheorem{theorem}{Theorem}
\newtheorem{lemma}[theorem]{Lemma}
\newtheorem{proposition}[theorem]{Proposition}
\newtheorem{corollary}[theorem]{Corollary}
\newtheorem{fact}[theorem]{Fact}
\newtheorem{assumption}[theorem]{Assumption}
\DeclareMathOperator{\sech}{sech}
\DeclareMathOperator{\arccot}{arccot}
\theoremstyle{definition} 
\newtheorem{definition}[theorem]{Definition}
\newtheorem{remark}[theorem]{Remark}
\newcommand{\eps}{\varepsilon}
\newcommand{\Z}{{\mathbb Z}}
\newcommand{\CC}{{\mathbb C}}
\newcommand{\RR}{{\mathbb R}}
\newcommand{\ZZ}{{\mathbb Z}}
\newcommand{\im}{\mathrm{i}}
\newcommand{\R}{{\mathbb R}}
\newcommand{\HH}{{\mathbb H} }
\newcommand{\ev}{{\mathbb{E}}}
\newcommand{\lstar}{{\raise-0.15ex\hbox{$\scriptstyle \ast$}}}
\newcommand{\ol}{\overline}
\theoremstyle{remark} 
\newcommand{\Sineb}{\operatorname{Sine}_{\beta}}
\newcommand{\Bessel}{\operatorname{Bessel}}
\newcommand{\BB}{\operatorname{Bess}}
\newcommand{\HPb}{\operatorname{HP}_{\beta, \delta}}
\newcommand{\Besselop}{\mathtt{Bessel}_{\beta,a}}
\newcommand{\Bessop}{\mathtt{Bess}_{\beta,a}}
\newcommand{\CJop}{\mathtt{CJ}_{n,\beta,\delta}}
\newcommand{\ROop}{\mathtt{RO}_{2n,\beta,a,b}}
\newcommand{\Huaop}{\mathtt{HP}_{\beta,\delta}}
\newcommand{\err}{\textup{err}}
\newcommand{\uu}{\mathfrak{u}}
\newcommand{\ttHP}{\mathtt{HP}}
\newcommand{\ttB}{\mathtt{B}}
\newcommand{\ttr}{\mathtt{r\,}}
\newcommand{\res}{\ttr}
\newcommand{\intr}{\mathfrak{t\,}}
\newcommand{\Dirop}{\mathtt{Dir}}
\definecolor{violet}{rgb}{0.8,0,0.2}
\newcommand{\ed}{\stackrel{d}{=}}
\newcommand{\cB}{{\mathcal B}}
\newcommand{\mat}[4]{\left( \begin{array}{cc}
		#1 & #2  \\
		#3 & #4  \\
	\end{array} \right)}
\newcommand{\dom}{\operatorname{dom}}
\newcommand{\tr}{\operatorname{tr}}
\newcommand{\ind}{\mathbf 1}
\newcommand{\tch}{\upsilon} 
\newcommand{\cH}{\mathcal{H}}
\newcommand{\cA}{\mathcal{A}}
\newcommand{\cE}{\mathcal{E}}
\newcommand{\benedek}[1]{\textcolor{red}{\texttt{#1}}}
\newcommand{\tl}{\widetilde}
\newcommand{\bside}{\noindent\textbf{\benedek{Begin side computation.}}
	\begin{footnotesize}}
	\newcommand{\eside}{\end{footnotesize}
	\noindent \textbf{\benedek{End side computation.}}}
\newcommand{\spec}{\operatorname{spec}}
\begin{document}
	\maketitle

\begin{abstract}
We prove an operator level limit for the circular Jacobi $\beta$-ensemble. As a result, we characterize the counting function of the limit point process via coupled systems of stochastic differential equations. We also show that the normalized characteristic polynomials converge to a random analytic function, which we characterize via the joint distribution of its Taylor coefficients at zero and as the solution of a stochastic differential equation system. 
We also provide analogous results for the real orthogonal $\beta$-ensemble.
\end{abstract}

%
%
%
%
%
%
%
%

\section{Introduction}

We study two families of finite point processes on the circle: the \emph{circular Jacobi $\beta$-ensemble} (CJ$\beta$E) and the  \emph{real orthogonal $\beta$-ensemble} (RO$\beta$E).

For a given integer $n\ge 1$, $\beta>0$, and $\delta\in \CC$ with $\Re \delta>-1/2$ the size $n$ \emph{circular Jacobi $\beta$-ensemble}  with parameters $\beta, \delta$ is the joint distribution of 
$n$ distinct points $\{e^{i \theta_1}, \dots, e^{i \theta_n}\}$ with $\theta_j\in [-\pi,\pi)$, where the joint density function of the angles $\theta_j$ is given by
\begin{align}
 p_{n,\beta, \delta}^{cj}(\theta_1, \dots, \theta_n)=  \frac{1}{Z_{n,\beta,\delta}^{cj}} \prod_{j<k\le n} \left|e^{i \theta_j}-e^{i \theta_k}\right|^\beta  \prod_{k=1}^n (1-e^{- i \theta_k})^{ \delta}(1-e^{ i \theta_k})^{ \bar\delta} \label{eq:PDF_cjacobi}, \qquad \theta_j\in [-\pi,\pi).
\end{align}
Here  $Z_{n,\beta,\delta}^{cj}$ is an explicitly computable normalizing constant (see e.g. Section 4.1 of \cite{ForBook}).

We  write $\Lambda_n\sim \textup{CJ}_{n,\beta,\delta}$ to denote that the random set $\Lambda_n=\{\theta_1, \dots, \theta_n\}$ has  joint density given by \eqref{eq:PDF_cjacobi}.
This family of distributions extends several other named ensembles.  For $\beta=2$ the distribution  was studied by Hua \cite{Hua} and Pickrell \cite{Pickrell}, and is known as the Hua-Pickrell measure in the literature. For $\delta=0$ the distribution  is called the circular $\beta$-ensemble. In particular, when $\beta=2$ and $\delta=0$ we get  the circular unitary ensemble, which gives the joint eigenvalue distribution of an $n\times n$ Haar unitary matrix. For $k\in \ZZ_+$ with $\delta=\frac{\beta k}{2}$ the measure given by  \eqref{eq:PDF_cjacobi} can also be realized as a conditioned version of the size $n+k$ circular $\beta$-ensemble, conditioned to have $k$ points at $1$ (i.e.~$\theta=0$). See  \cite{BO01}, \cite{FN_2001},  Section 3.12 of \cite{ForBook}, and the references within for additional information about the ensemble.

The \emph{real orthogonal $\beta$-ensemble} is a  family of distributions describing an even number of points on the unit circle in a reflection symmetric configuration. If we parametrize the points as $\{\pm e^{i \theta_1}, \dots, \pm e^{i \theta_n}\}$ with $\theta_j\in (0,\pi)$ then the joint density for $(\theta_1, \dots, \theta_n)$ is given by
\begin{align}
&p_{n,\beta,a,b}^{o}(\theta_1,\dots, \theta_n)=    \frac{1}{Z_{n,\beta,a,b}^o} \prod_{j<k\le n} |\cos(\theta_j)-\cos(\theta_k)|^\beta\notag \\ &\hskip150pt \times\prod_{k=1}^n |1-\cos(\theta_k)|^{\frac{\beta}{2}(a+1)-1/2} |1+\cos(\theta_k)|^{\frac{\beta}{2}(b+1)-1/2}. \label{eq:PDF_ortho}
\end{align}
Here $\beta>0, a>-1, b>-1$ are real parameters, and $Z_{n,\beta,a,b}^o$  is an explicitly computable normalizing constant (see \cite{KK}). The ensemble was introduced in \cite{KillipNenciu} and \cite{KK} as a generalization of the joint eigenvalue distributions of some of the classical ensembles on the orthogonal and special orthogonal group of matrices.  E.g.~with $\beta=2$, $a=b=\frac{1}{\beta}-1$, we get the joint eigenvalue distribution of a $2n\times 2n$ special orthogonal matrix chosen  according to Haar measure on $SO(2n)$. (Note that our choice of parameters are slightly different from the one used in \cite{KillipNenciu} and \cite{KK}.)
We  write $\Lambda_{2n}\sim \textup{RO}_{2n,\beta,a, b}$ 
to denote that the random set $\Lambda_{2n}=\{\pm \theta_1, \dots, \pm\theta_n\}$ has a distribution determined by the joint density given by
\eqref{eq:PDF_ortho}.

The real orthogonal $\beta$-ensemble can be connected to another named ensemble on the real line via a change of variables. Suppose that the joint distribution of $\{\theta_1, \dots, \theta_n\}$ is determined by the density \eqref{eq:PDF_ortho}, and introduce \begin{align}\label{eq:changeofvarJ}
x_j=\tfrac12 (1-\cos(\theta_j)).
\end{align}
Then $\{x_1, \dots, x_n\}\in(0,1)^n$ has joint density function
\begin{align}\label{eq:PDF_Jacobi}
   \frac{1}{Z_{n,\beta,a,b}^j}
    \prod_{j<k\le n}|x_j-x_k|^\beta \prod_{k=1}^n x_k^{\frac{\beta}{2}(a+1)-1} (1-x_k)^{\frac{\beta}{2}(b+1)-1},
\end{align}
and the corresponding distribution is called 
 the \emph{real Jacobi $\beta$-ensemble}.
The real Jacobi $\beta$-ensemble also arises from the study of multivariate analysis of variance (MANOVA) in statistics: if $\beta=2$ and $a,b\in\Z_{\geq 0}$, then  \eqref{eq:PDF_Jacobi} is the joint eigenvalue density of an $n\times n$ MANOVA model with parameters $n_1=n+a$ and $n_2=n+b$.

We study point process limits of the CJ$\beta$E and RO$\beta$E, together with the scaling limits of some related objects, in particular the limits of the normalized characteristic polynomials. Our approach follows the framework introduced in \cite{BVBV_op} and \cite{BVBV_szeta}. This framework, together with a high level description of our main results is  summarized in the following outline.

\begin{enumerate}
    \item \textbf{Differential operators from probability measures.} \cite{BVBV_op} describes how the spectral information (the modified Verblunsky coefficients) of a finitely supported probability measure on the unit circle can be used to construct a differential operator (a Dirac operator) with a pure point real spectrum. The spectrum of the constructed differential operator is the periodic lifting of the angles corresponding to the support of the probability measure, see Proposition \ref{prop:unitary_repr} for the precise statement. We summarize the  background and the relevant results in Section \ref{s:DirOp}.
    
    \item \textbf{Random Dirac operators.} \cite{BNR2009} and \cite{KillipNenciu} provide constructions for random probability measures on the unit circle where the support of the measure is given by the CJ$\beta$E and RO$\beta$E, respectively,  and the distribution of the modified Verblunsky coefficients can be explicitly described, see Theorems \ref{thm:CJVer} and \ref{thm:OrthVer}. 
    These constructions together with Proposition \ref{prop:unitary_repr} lead to the construction of the  random differential operators $\CJop$ and $\ROop$ with pure point spectrum. The spectrum of $\CJop$ is distributed as $n \Lambda_n+2\pi n \ZZ$ with $\Lambda_n\sim \textup{CJ}_{n,\beta,\delta}$, and the spectrum of $\ROop$ is distributed as $2n \Lambda_{2n}+4\pi n \ZZ$ where $\Lambda_{2n}\sim \textup{RO}_{2n,\beta, a, b}$, see Section \ref{subs:finiteOp}. The inverses of these differential operators (after a change of basis) are denoted by $\res \CJop$ and $\res \ROop$, these are random  Hilbert-Schmidt integral operators acting on $L^2$ functions of the form $[0,1)\to \R^2$.

    \item  \textbf{Operator level convergence.} The operators $\CJop$ and $\ROop$ and their inverses can be parameterized  in terms of certain random walks in the hyperbolic plane. Under the appropriate scaling these random walks converge to diffusions in the hyperbolic plane. As shown in \cite{BVBV_op}, one can construct random differential operators in terms of these diffusions, these will be called $\Huaop$  and $\Bessop$, respectively. (See Section \ref{subs:limitOp}.) Both of these random differential operators have pure point spectra, the distribution of the point processes of eigenvalues are denoted by $\HPb$ and $\BB_{\beta,a}$, respectively. The process $\HPb$ for $\delta=0$ is  the process $\Sineb$ introduced in \cite{BVBV} as the bulk scaling limit of the Gaussian $\beta$-ensemble.
    The process $\BB_{\beta,a}$ is just a symmetrized (and scaled) version of the square root of the hard edge process $\Bessel_{\beta,a}$ introduced in \cite{RR}.

    We will prove that in  appropriate couplings we have the operator level convergence
    \begin{align}\label{eq:CJ_lim}
    \|\res \CJop  -\res \Huaop\|_{HS} \to 0 \text{ almost surely as $n\to \infty$,} \\
    \|\res \ROop  -\res \Bessop\|_{HS} \to 0 \text{ almost surely as $n\to \infty$.} \label{eq:RO_lim}
    \end{align}
    
The precise version of  these results are stated in Theorems \ref{thm:CJ_op_limit} and \ref{thm:RO_op_limit} in Section \ref{subs:Limits}. These results identify the point process scaling limits of the ensembles CJ$\beta$E and RO$\beta$E as the 
point processes $\HPb$ and $\BB_{\beta,a}$. (See Corollaries \ref{cor:CJ_lim} and \ref{cor:RO_lim}.) The distribution of the point process $\HPb$ can be characterized via its counting function using coupled systems of stochastic differential equations. Two equivalent characterizations are given in   Theorems \ref{thm:KSsde} and \ref{thm:HP_2piZ} in Section \ref{subs:limitproc}. 

\item \textbf{Convergence of characteristic polynomials.} \cite{BVBV_szeta} introduced the secular function for  a Dirac operator $\tau$ which is an entire function with zero set given by the spectrum of $\tau$. This is a generalization of the normalized characteristic polynomial of a unitary matrix. We review the definition in Section \ref{s:DirOp}.
\cite{BVBV_szeta} also showed that results of the form of \eqref{eq:CJ_lim} and \eqref{eq:RO_lim} (together with similar statements on the so-called integral trace) imply that the scaled and normalized characteristic polynomials of CJ$\beta$E and RO$\beta$E converge to the secular functions of the  operators $\Huaop$ and $\Bessop$. These results are stated as part of  Corollaries \ref{cor:CJ_lim} and \ref{cor:RO_lim}. Theorems \ref{thm:zetaHP} and \ref{thm:zetaB} provide two separate characterizations of the limiting random entire functions: by describing the joint distribution of the Taylor coefficients, and a characterization using entire function valued stochastic differential equations. 
\end{enumerate}

For the circular Jacobi $\beta$-ensemble the operators $\CJop$ and $\Huaop$ were introduced in \cite{BVBV_op}, and the convergence \eqref{eq:CJ_lim} was stated as a conjecture. (More precisely: as a statement to be proved in a future paper.) 
 In \cite{AN2019} Assiotis and Najnudel showed the existence of the  point process limit of  the  circular Jacobi $\beta$-ensemble by providing a coupling of the scaled  finite ensembles so that they posses an a.s.~point process limit. However their result does not provide an explicit characterization for the limiting point process. 
 
 Our main new contributions for the study of the scaling limits of CJ$\beta$E are the 
operator level convergence of Theorem \ref{thm:CJ_op_limit}, the various  characterizations of the limit point process $\HPb$ (Theorems \ref{thm:KSsde} and \ref{thm:HP_2piZ}), 
  and the description and characterization of the limit of the normalized characteristic polynomials (Corollary \ref{cor:CJ_lim} and Theorem \ref{thm:zetaHP}). We also state results on the large gap asymptotics of the  point process $\HPb$, a central limit theorem on the counting function of $\HPb$, and a process level transition from $\HPb$ to the $\Sineb$ process (see Theorems \ref{thm:largegap} and \ref{thm:CLT_transition}). 
 Some of our results are extensions and generalizations of corresponding results for the circular $\beta$-ensemble and the $\Sineb$ process proved in \cite{KS}, \cite{BVBV_op}, \cite{BVBV_19}, \cite{BVBV_szeta}. 

In the $\beta=1, 2, 4$ cases the limiting point processes have been described via their $n$-point correlation functions in \cite{FN_2001}. In \cite{Liu2017} the limiting correlation functions were derived in the case when $\beta$ is an even integer, together with exact formulas for expectations of products of characteristic polynomials. (Note that the normalization for the characteristic polynomials in \cite{Liu2017} is different from ours.) \cite{FLT_2021} provides corrections to these results in the case when $\beta$ is an even integer or equal to 1. 
Scaling limits of characteristic polynomials of classical random matrix ensembles were also studied in \cite{CNN} and  \cite{chhaibi2019limiting}.

A version of the first three steps of the  outline above was carried out by Holcomb and Moreno Flores in \cite{HM2012} for the real Jacobi $\beta$-ensemble. Using the change of variables of \eqref{eq:changeofvarJ}, their results also imply the point process level convergence of 
RO$\beta$E. The proof in \cite{HM2012} relies on a tridiagonal representation of the real Jacobi $\beta$-ensemble together with the operator convergence approach for studying the hard edge limit, introduced in \cite{RR} for the Laguerre $\beta$-ensemble. 
\cite{BVBV_op} provided a representation of  the hard edge limit operator as a random Dirac operator.
\cite{H2018} provides various descriptions and properties of the limiting (hard edge) point process. 
Our main new contributions for the study of RO$\beta$E   are the existence and description of the limit of the normalized characteristic polynomials (Corollary \ref{cor:RO_lim} and Theorem \ref{thm:zetaB}), and  a new approach to prove the point process limit via operator convergence (Theorem \ref{thm:RO_op_limit}). 

\subsubsection*{Outline of the paper} 

In Section \ref{s:DirOp} we outline the used operator theoretic framework, the presentation will mostly follow that of \cite{BVBV_op} and \cite{BVBV_szeta}. In Section \ref{s:RandomDir} we introduce the random differential operators corresponding to the finite ensembles and their limits. Section \ref{s:results} states our precise results, including the  description of the limiting point processes and random analytic functions. 
Sections \ref{s:tools}, \ref{s:pathconv}, and \ref{s:oplim} provide the proofs for the operator convergence results, while Section \ref{s:limitingobjects} contains the proofs of the statements of the properties and characterizations of the limiting objects.

\section{The operator theoretic framework} \label{s:DirOp}

This section collects all the deterministic operator theoretic ingredients. We describe the type of differential and integral operators we consider, the definition of the secular function, and how these objects can be used to study finitely supported probability measures on the unit circle.

\subsection{Dirac operators}\label{subs:Diracop}

We start by collecting some basic facts about differential operators of the form
\begin{equation}\label{def:Dirop}
	\tau: f \to R^{-1}(t)Jf', \qquad f:[0,1)\to \R^2, \quad J=\mat{0}{-1}{1}{0},
\end{equation}
where $R(t)$ is a  positive definite real symmetric $2\times2$ matrix valued function on $[0,1)$. These differential operators are called Dirac operators, for more details see \cite{Weidmann} or \cite{BVBV_op}.

We consider differential operators of the form \eqref{def:Dirop} where the matrix valued function $R(t)$ is defined from a locally bounded measurable function $x+i y:[0,1)\to \HH=\{z\in \CC: \Im z>0\}$ as follows: 
\begin{align}\label{eq:R}
 R=\frac12 X^tX,\quad X=\frac{1}{\sqrt{y}}
\mat{1}{-x}{0}{y}. 
\end{align}
We call $R$ the \emph{weight function}, and $x+i y$ the \emph{generating path} of $\tau$.

The boundary conditions for $\tau$ at 0 and 1 are given by nonzero, non-parallel $\R^2$ vectors $\uu_0, \uu_1$. We will assume that these vectors are normalized so that they satisfy the condition
\begin{align}\label{eq:u_assumption}
\uu_0^t J \uu_1=1.
\end{align}
We will also have the following integrability assumption for the boundary conditions:
\begin{assumption}\label{assumption:1}
    \begin{align}\label{eq:int_assumption}
\int_0^1  \|R(s) \uu_1\| ds<\infty \quad \textup{and} \quad     \int_0^{1} \int_0^t \mathfrak  u_0  ^t R(s) \mathfrak  u_0   \, \mathfrak  u_1^t R(t) \mathfrak  u_1 ds dt<\infty.
\end{align}
\end{assumption}
Under these conditions $\tau$ will be self-adjoint on the following domain: 
\begin{equation}\label{Dir:domain}
\mathrm{dom}(\tau)=\{v\in L^2_R\cap\text{AC}:  \tau v\in L^2_R, \lim_{s\to 0}v(s)^tJ\uu_0=0,\lim_{s\to 1} v(s)^tJ\uu_1=0\}.
\end{equation}
Here $L^2_R$ is the $L^2$ space of functions $f:[0,1)\to\R^2$ with the $L^2$ norm $||f||_R^2=\int_0^1 f^tRfds$, and $\text{AC}([0,1))$ is the set of absolutely continuous real functions on $[0,1)$. We will use the notations $\Dirop(R,\uu_0,\uu_1)$ or $\Dirop(x+i y, \uu_0, \uu_1)$ for the the operator $\tau$ defined via \eqref{def:Dirop} and \eqref{eq:R} with boundary conditions $\uu_0, \uu_1$ on the domain \eqref{Dir:domain}. We sometimes replace the $\R^2$ vector by the element in $\RR\cup \{\infty\}$ corresponding to the ratio of its two coordinates: $[a,b]^t$ corresponds to $a/b$ if $b\neq 0$ and $\infty$ if $b=0$.

The inverse of $\tau=\Dirop(x+i y, \uu_0, \uu_1)$ is a Hilbert-Schmidt integral operator on $L^2_R$ with kernel given by 
\begin{equation}\label{eq:HS_kernel}
	K_{\tau^{-1}}(s,t) = \big( \uu_0\uu_1^t1_{s<t}+\uu_1\uu_0^t1_{s\geq t}\big) R(t).
\end{equation}
This means that if $f\in \mathrm{dom}(\tau)$ and $g=\tau f$ then $f(s)=\int_0^1 K_{\tau^{-1}}(s,t) g(t) dt$. The fact that the integral operator is Hilbert-Schmidt follows from the second inequality of  \eqref{eq:int_assumption}, and implies that $\tau$ has a discrete pure point  spectrum with nonzero real eigenvalues $\lambda_k, k\in \ZZ$ that satisfy $\sum_k \lambda_k^{-2}<\infty$. We label the eigenvalues so that they are in an increasing order with  $\lambda_{-1}<0<\lambda_0$.

After the change of variables $\hat \tau=X\tau X^{-1}$,  the inverse $\ttr\tau:=\hat \tau^{-1}$ is an integral operator on the $L^2$ space of functions $f:[0,1)\to\R^2$   with norm $\|f\|^2=\int_0^1 f^t f ds$, and its kernel is given by
\begin{equation}\label{Dir:inverse}
K_{\ttr \tau}(s,t) = \tfrac12\big( a(s)c(t)^t1_{s<t}+c(s)a(t)^t1_{s\geq t}\big), \quad a(s)=X(s)\uu_0,\quad c(s)=X(s)\uu_1.
\end{equation}
We define the \emph{integral trace} of $\ttr \tau$ as 
 the  integral of the trace of the kernel $K_{\ttr \tau}$, and denote it by $\mathfrak{t}_\tau$:
  \begin{align}
\mathfrak{t}_\tau=\int_0^{1} \tr K_{\ttr \tau}(s,s) ds=\tfrac12 \int_0^{1} a(s)^t c(s) ds=\int_0^1 \uu_0^t R(s) \uu_1 ds.
    \label{eq:int_tr}
\end{align}  
By the first inequality of \eqref{eq:int_assumption} the integral trace is finite.

We define the \emph{secular function} of $\tau$ with the expression 
\begin{align}
    \zeta_\tau(z)=e^{-z{\mathfrak t}_\tau}{\det}_2(I-z \,\ttr \tau) =e^{-\tfrac{z}{2} \int_0^{1} a(s)^t c(s) ds} \prod_k (1-z/ \,\lambda_k)e^{z/ \lambda_k}.\label{eq:zeta}
\end{align}
Here ${\det}_2$ is the second regularized determinant, see \cite{Simon_det}. The secular function $\zeta_\tau$ is an entire function with zero set given by $\lambda_k, k\in \ZZ$, it is
an analogue of the normalized characteristic polynomial of a square matrix. (See \cite{BVBV_szeta} for details.)

The next statement provides comparisons for the spectra and secular functions of two Dirac operators.

\begin{proposition}\label{prop:comp}
Let $\tau_1 $, $\tau_2$ be two Dirac operators on $[0,1)$ satisfying assumptions \eqref{eq:u_assumption} and \eqref{eq:int_assumption}.  Denote by $\lambda_{k,i}, \zeta_i, \res_i, \mathfrak t_i$ the eigenvalues,  secular function, resolvent and integral trace of $\tau_i$. 
Let  $\|\cdot \|$ denote the Hilbert-Schmidt norm.
Then 
\begin{align}\label{eq:ev_comp}
\sum_k \left|\lambda_{k,1}^{-1}-\lambda_{k,2}^{-1}\right|^2\le& \|\res_1-\res_2\|^2,
\end{align}
and there is a universal constant $a>1$ so that for all $z\in \CC$
\begin{align}\label{eq:zeta_comp}
|\zeta_1(z)-\zeta_2(z)|\le& \Big(e^{|z| |\mathfrak t_1-\mathfrak t_2|}-1+|z|\big\|\res_1-\res_2\big\|\Big)
a^{|z|^2(\|\res_1\|^2+\|\res_2\|^2)+|z|(|\mathfrak{t}_1|+|\mathfrak{t}_2|)+1}
\end{align}
\end{proposition}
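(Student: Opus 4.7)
My plan is to treat the two inequalities separately and reduce each to a standard operator-theoretic estimate; the setup is essentially done, because in the excerpt $\hat \tau_i = X_i \tau_i X_i^{-1}$ is self-adjoint on the ordinary $L^2$ space through the isometry $f\mapsto X_i f/\sqrt{2}$ from $L^2_{R_i}$ to $L^2$, so $\res_i = \hat \tau_i^{-1}$ is a self-adjoint Hilbert--Schmidt operator whose eigenvalues are exactly the reciprocals $\lambda_{k,i}^{-1}$ of the eigenvalues of $\tau_i$.

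For \eqref{eq:ev_comp}, since $\res_1,\res_2$ are self-adjoint Hilbert--Schmidt operators acting on the same Hilbert space, I would invoke the Hoffman--Wielandt / Lidskii--Mirsky inequality for compact self-adjoint operators: if the eigenvalues of two such operators are arranged in the same monotone order, then the $\ell^2$ distance between the two eigenvalue sequences is bounded above by the Hilbert--Schmidt norm of their difference. Applied to the eigenvalues $\{\lambda_{k,i}^{-1}\}_{k\in\ZZ}$ (ordered as in the statement), this yields \eqref{eq:ev_comp} immediately.

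For \eqref{eq:zeta_comp} I would first decompose
\begin{align*}
\zeta_1(z)-\zeta_2(z)=\bigl[e^{-z \mathfrak t_1}-e^{-z \mathfrak t_2}\bigr]{\det}_2(I-z\res_1)+e^{-z \mathfrak t_2}\bigl[{\det}_2(I-z\res_1)-{\det}_2(I-z\res_2)\bigr],
\end{align*}
and then bound the two pieces using the standard estimates for the regularized determinant (see Simon, \emph{Trace Ideals and Their Applications}): for Hilbert--Schmidt operators $A,B$ one has $|{\det}_2(I+A)|\le e^{\|A\|^2/2}$ and the Lipschitz bound
\begin{align*}
\bigl|{\det}_2(I+A)-{\det}_2(I+B)\bigr|\le \|A-B\| \, \exp\!\bigl(\tfrac12(\|A\|+\|B\|+1)^2\bigr).
\end{align*}
For the scalar factors I would use $|e^{-z\mathfrak t_1}-e^{-z\mathfrak t_2}|\le e^{|z||\mathfrak t_2|}(e^{|z|\,|\mathfrak t_1-\mathfrak t_2|}-1)$, which follows from $|e^w-1|\le e^{|w|}-1$. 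Substituting $A=-z\res_1,B=-z\res_2$, expanding the square via $(x+y+1)^2\le 3(x^2+y^2+1)$, and combining with the bounds on the exponential factor should produce a common factor of exactly the form $a^{|z|^2(\|\res_1\|^2+\|\res_2\|^2)+|z|(|\mathfrak t_1|+|\mathfrak t_2|)+1}$ multiplied by $(e^{|z|\,|\mathfrak t_1-\mathfrak t_2|}-1)+|z|\,\|\res_1-\res_2\|$, once the universal constant $a$ is chosen large enough.

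The only real point of care is the precise form of the Lipschitz estimate for $\det_2$ and book-keeping of the multiplicative constants so that everything collapses into the single geometric-in-$|z|^2$ prefactor demanded by the statement; the eigenvalue inequality, by contrast, is an immediate application of a classical operator-theoretic result, so I expect the determinant comparison to be the main technical step rather than a conceptual obstacle.
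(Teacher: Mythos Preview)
Your proposal is correct and follows essentially the same approach as the paper: the paper attributes \eqref{eq:ev_comp} to the infinite-dimensional Hoffman--Wielandt inequality and \eqref{eq:zeta_comp} to the standard ${\det}_2$ estimates from Simon's \emph{Trace Ideals} (deferring the bookkeeping to Proposition~21 of \cite{BVBV_szeta}), which is exactly the decomposition and set of inequalities you outline. Your write-up in fact supplies more detail than the paper's one-line justification.
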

The inequality \eqref{eq:ev_comp} is just the Hoffman-Wielandt inequality in infinite dimensions (see e.g.~\cite{BhatiaElsner}), the bound \eqref{eq:zeta_comp}  follows from standard properties of the regularized determinant \cite{Simon_det} (see Proposition 21 in \cite{BVBV_szeta} for additional details). Proposition \ref{prop:comp} shows that the Hilbert-Schmidt convergence of Dirac operators implies the convergence of the spectra, and if the integral traces converge as well then we have uniform on compacts convergence of the secular functions. 

The end points of a Dirac operator can be classified as limit circle or limit point based on the integrability of the solutions of $(\tau-\lambda)u=0$ near that end point. By the Weyl's alternative theorem (e.g. Theorem 5.6 in \cite{Weidmann}) the integrability of the solutions does not depend on $\lambda$. Hence one can choose $\lambda=0$, and just check the integrability of the constant vectors. 
Since  $R(t)$ is locally bounded near 0, the  left endpoint of the interval $[0,1)$ is \emph{limit circle} with respect to the weight function $R$: for any $v\in \RR^2$ the function $v^t Rv$ is integrable near $0$. Assumption \eqref{eq:int_assumption} shows that $v R v$ is integrable near 1 if $v\parallel \uu_1$, but that might not be the case if $v \! \not {\!\parallel} \, \uu_1$. This shows that the right endpoint could be limit circle or \emph{limit point}.

For certain applications of the limiting objects, it is more convenient to consider operators that have 0 as the endpoint that could possibly be limit point. In this case the domain of the operator is $(0,1]$, and we have to modify are setup and assumptions. This \emph{reversed framework} will be introduced in Section \ref{subs:reverse}, where we also discuss 
other transformations of Dirac operators.

\subsection{Dirac operators for finitely supported probability measures on the unit circle}

We review the construction given Section 3 of \cite{BVBV_szeta}  that shows how a finitely supported probability measure on the unit circle can be represented using a Dirac operator of the form \eqref{def:Dirop}. (See also Section 5 of \cite{BVBV_op}.)

Let $\mu$ be a probability measure whose support is a set of $n$ distinct points $e^{i\lambda_j}, 1\le j \le n$ on the unit circle, and assume $\mu(\{1\})=0$. The characteristic polynomial of $\mu$, normalized at $1$,  is  defined as
\begin{align}\label{eq:charpol}
p_\mu(z)=\prod_{j=1}^n\frac{z-e^{i\lambda j}}{1-e^{i\lambda j}}.
\end{align}
For $0\le k\le n$, the $k$th orthogonal polynomial $\Phi_k(z)$ is defined as the unique polynomial with main coefficient $1$ of degree $k$ that is orthogonal to $1,\ldots,z^{k-1}$ in $L^2(\mu)$. 
We denote by $\Psi_k(z)=\frac{\Phi_k(z)}{\Phi_k(1)}$ the normalized orthogonal polynomials. Note that we have $\Phi_0=\Psi_0=1$ and  $p_\mu=\Psi_n$.
For $0\le k\le n$ we define $\Phi_k^*, \Psi_k^*$ as the
reversed polynomials 
\[
\Phi^*_k(z)=z^k \overline{\Phi_k(1/\bar z)}, \qquad \Psi^*_k(z)=z^k \overline{\Psi_k(1/\bar z)}.
\]
The vector $\binom{\Phi_k}{\Phi_k^*}$ satisfies the  Szeg\H{o} recursion \cite{OPUC1}:
\begin{align}\label{eq:Szego}
    \binom{\Phi_{k+1}(z)}{\Phi_{k+1}^*(z)}=A_k \mat{z}{0}{0}{1}\binom{\Phi_{k}(z)}{\Phi_{k}^*(z)}, \qquad 0\le k\le n-1.
\end{align}
Here $A_k=\mat{1}{-\bar \alpha_k}{-\alpha_k}{1}$, the complex numbers $\alpha_0, \dots, \alpha_{n-1}$ are called the Verblunsky coefficients. They satisfy $|\alpha_k|<1$ for $0\le k\le n-1$ and $|\alpha_{n-1}|=1$. The normalized orthogonal polynomials $\Psi_k, \Psi_k^*$ satisfy a similar recursion as \eqref{eq:Szego}, with the matrix 
\[
\tl A_k=\mat{\frac{1}{1-\gamma_k}}{-\frac{\gamma_k}{1-\gamma_k}}{-\frac{\bar \gamma_k}{1-\bar \gamma_k}}{\frac{1}{1-\bar \gamma_k}}
\]
in place of $A_k$. The complex numbers $\gamma_k$, $0\le k\le n-1$ are called the modified or deformed Verblunsky coefficients (see \cite{BNR2009}). They satisfy 
\begin{align}\label{eq:modifiedV}
\gamma_k=\bar \alpha_k \prod_{j=0}^{k-1} \frac{1-\bar \gamma_j}{1-\gamma_j}, \qquad 0\le k\le n-1,
\end{align}
from which it follows that $|\gamma_k|=|\alpha_k|$.


Define $w_k,v_k\in \mathbb R$  with
\begin{align}\label{def:wv}
   \frac{2\gamma_k}{1-\gamma_k}= w_k-iv_k. 
\end{align}
Set $x_0=0$, $y_0=1$, and define recursively
\begin{align}\label{xyrec}
x_{k+1}=x_k+v_k y_k, \qquad y_{k+1}=y_k(1+w_k), \qquad 0\le k\le n-1.
\end{align}
Note that  $|\gamma|\le 1$ implies $\Re \frac{2\gamma}{1-\gamma}\ge -1$, and we have equality  if and only if $|\gamma|=1$, $\gamma\neq 1$.
Hence $y_k>0$ for $1\le k\le n-1$ and $y_n=0$. The following proposition was proved in \cite{BVBV_szeta}. 

\begin{proposition}[\cite{BVBV_szeta}]
\label{prop:unitary_repr}
Set $x(t)+i y(t)=x_{\lfloor nt \rfloor}+i y_{\lfloor nt \rfloor}$ for $t\in [0,1]$.
Let
\begin{align}\label{eq:dscrtDirop}
\tau = R^{-1} \mat{0}{-1}{1}{0} \frac{d}{dt}, \qquad R=\frac{X^t X}{2\det X}, \qquad X=\mat{1}{-x}{0}{y},
\end{align}
with boundary conditions $\mathfrak  u_0  =[1,0]^t$, $\mathfrak  u_1 =[-x(1),-1]^t$.

Then $\tau$ satisfies our assumptions, the spectrum of $\tau$ is given by the set 
\[
\spec \tau=\{n \lambda_k+2\pi n j: 1\le k\le n, j\in \ZZ\},
\]
and the secular function of $\tau$ satisfies
\begin{align}\label{eq:char_pol}
\zeta_\tau(z)=
p_\mu(e^{iz/n})e^{-iz/2}=
 \prod_{j=1}^n \frac{\sin(\lambda_j/2-z/(2n))}{\sin(\lambda_j/2)}.
\end{align}
\end{proposition}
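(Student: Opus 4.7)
The proof will exploit the piecewise-constant structure of the generating path $x+iy$: on each interval $[k/n,(k+1)/n)$ the weight $R$ is constant, so the eigenvalue ODE $\tau u = \lambda u$ integrates explicitly, reducing the spectral problem to a discrete transfer-matrix recursion on $\R^2$. The goal is to identify this recursion, after a natural change of basis and complexification, with the normalized Szeg\H{o} recursion \eqref{eq:Szego} evaluated at $z = e^{i\lambda/n}$; the spectrum and the secular function are then read off from $\Psi_n = p_\mu$.

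First I would verify Assumption \ref{assumption:1}: since $y_k>0$ for $0\le k<n$, the matrix $R$ is piecewise constant and bounded on $[0,1)$, so the two integrability conditions in \eqref{eq:int_assumption} are immediate, and $\uu_0^t J \uu_1 = 1$ is a direct check using $\uu_0 = [1,0]^t$ and $\uu_1 = [-x_n,-1]^t$. Next, rewriting $\tau u = \lambda u$ as $u' = -\lambda J R u$, one observes that $J R_k$ is traceless with determinant $1/4$, hence $(J R_k)^2 = -I/4$, and the one-step transfer matrix has the explicit form
\begin{align*}
T_k(\lambda) \;=\; \exp(-\lambda J R_k / n) \;=\; \cos\bigl(\lambda/(2n)\bigr)\,I \;-\; 2\sin\bigl(\lambda/(2n)\bigr)\, J R_k .
\end{align*}
Setting $\phi_k = X_k u(k/n)$, the discrete recursion $\phi_{k+1} = X_{k+1} T_k(\lambda) X_k^{-1} \phi_k$ can be complexified (packaging the two real coordinates as a single complex number), and a direct calculation based on \eqref{def:wv}--\eqref{xyrec} and the parametrization \eqref{eq:modifiedV} of the modified Verblunsky coefficients identifies $X_{k+1} T_k(\lambda) X_k^{-1}$ with the normalized Szeg\H{o} step $\tl A_k \operatorname{diag}(z,1)$ at $z = e^{i\lambda/n}$. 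The initial condition $u(0) \parallel [1,0]^t$ corresponds to $\Psi_0 = \Psi_0^* = 1$, and iterating $n$ steps the right boundary condition $\lim_{t\to 1^-} u(t)^t J \uu_1 = 0$ reduces to $\Psi_n(e^{i\lambda/n}) = 0$. Since the zeros of $\Psi_n = p_\mu$ are exactly the points $e^{i\lambda_j}$, this gives $\spec\tau = \{n\lambda_j + 2\pi n k : 1\le j \le n,\; k\in\ZZ\}$.

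For the secular function, both $\zeta_\tau(z)$ and $g(z) := p_\mu(e^{iz/n}) e^{-iz/2} = \prod_j \sin(\lambda_j/2 - z/(2n))/\sin(\lambda_j/2)$ are entire of order one with the same zero set (by the previous step) and both equal $1$ at $z=0$, so Hadamard's theorem forces $\zeta_\tau(z)/g(z) = e^{cz}$ for some $c\in\CC$. The linear coefficient of $\log\zeta_\tau$ at $0$ is $-\mathfrak t_\tau$ by \eqref{eq:zeta}, while the linear coefficient of $\log g$ at $0$ is $-\tfrac{1}{2n}\sum_j \cot(\lambda_j/2)$; computing $\mathfrak t_\tau = \int_0^1 \uu_0^t R \uu_1\, ds = \tfrac{1}{2n}\sum_{k=0}^{n-1}(x_k - x_n)/y_k$ as a telescoping sum and matching it to $\tfrac{1}{2n}\sum_j\cot(\lambda_j/2)$ pins down $c=0$. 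The main obstacle in this plan is the algebraic identification at the core of step two: verifying that $X_{k+1} T_k(\lambda) X_k^{-1}$, after complexification, coincides with $\tl A_k\operatorname{diag}(e^{i\lambda/n},1)$. This requires carefully unpacking the definitions \eqref{def:wv}--\eqref{xyrec} together with \eqref{eq:modifiedV}, keeping normalizations consistent, and is what ultimately ties the Dirac framework of Section \ref{subs:Diracop} to the Szeg\H{o} recursion on the circle.
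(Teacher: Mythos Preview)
The paper does not give its own proof of this proposition; it simply records the statement and cites \cite{BVBV_szeta}, so there is no in-paper argument to compare against line by line. Your sketch is broadly correct and is in fact the natural strategy: integrate the eigenvalue ODE across each constant piece, observe that $(JR_k)^2=-I/4$ so the one-step propagator is $\cos(\lambda/(2n))I-2\sin(\lambda/(2n))JR_k$, conjugate by $X_k$, and identify the resulting $2\times2$ recursion with the normalized Szeg\H{o} step at $z=e^{i\lambda/n}$. This part is sound, and the spectrum identification follows.

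The weak spot is the secular-function argument. Hadamard plus matching the linear term is valid in principle, but your claim that the integral trace $\mathfrak t_\tau=\tfrac{1}{2n}\sum_{k}(x_k-x_n)/y_k$ equals $\tfrac{1}{2n}\sum_j\cot(\lambda_j/2)$ ``as a telescoping sum'' is not right: this is a genuine identity between the Verblunsky-path data and the spectral data of $\mu$, not a telescoping, and proving it independently amounts to redoing the Szeg\H{o} identification at the level of first derivatives (e.g.\ via $p_\mu'(1)$). A cleaner and more direct route, and the one used in \cite{BVBV_szeta} (cf.\ the use of Proposition~13 of that reference later in the present paper), is to invoke the characterization of $\zeta_\tau$ as a boundary pairing of the fundamental solution: if $H(\cdot,z)$ solves $JH'=zRH$ with $H(0,z)=\uu_0$, then $\zeta_\tau(z)$ is obtained from $H(1,z)$ and $\uu_1$ directly. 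Since you have already identified the discrete propagation of $H$ with the Szeg\H{o} recursion, this yields $\zeta_\tau(z)=p_\mu(e^{iz/n})e^{-iz/2}$ in one stroke, with the $e^{-iz/2}$ factor falling out of the complexification/change of basis rather than having to be pinned down by a separate trace computation.
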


\section{Random Dirac operators}\label{s:RandomDir}

This section introduces the random Dirac operators corresponding to the finite ensembles and to their limits.

\subsection{Operators for the finite ensembles} \label{subs:finiteOp}

The results of this section provide descriptions of random probability measures with support given by the CJ$\beta$E and RO$\beta$E, respectively, where the joint distribution of the modified Verblunsky coefficients can be described explicitly. 


\begin{definition}\label{def:Theta}
For  $a>0$ and $\Re \delta>-1/2$ we denote by $\Theta(a+1,\delta)$ the distribution on $\{|z|<1\}$ that has probability density function
\begin{align}
	 c_{a,\delta} (1-|z|^2)^{a/2-1}(1-z)^{\bar \delta} (1-\bar z)^{\delta},
	\end{align}
where $c_{a,\delta}=\frac{\Gamma(a/2+1+\delta)\Gamma(a/2+1+\bar \delta)}{\pi \Gamma(a/2)\Gamma(a/2+1+\delta+\bar \delta)}$.

We extend the definition for the $a=0$, $\Re \delta>-1/2$ case as follows:  $\Theta(1,\delta)$ is the distribution on $\{|z|=1\}$ with probability density function
\begin{align}
\tfrac{\Gamma(1+\delta)\Gamma(1+\bar \delta)}{\Gamma(1+\delta+\bar \delta)} (1-z)^{\bar \delta} (1-\bar z)^{\delta}.
\end{align}

\end{definition}

\begin{definition}\label{def:Beta*}
For $s,t>0$ let $\tl{\mathrm{B}}(s,t)$ denote the scaled (and flipped) beta distribution on $(-1,1)$ that has probability density function 
	\begin{align*}
\tfrac{2^{1-s-t}\Gamma(s+t)}{\Gamma(s)\Gamma(t)}(1-x)^{s-1}(1+x)^{t-1}.
	\end{align*} 
\end{definition}

\begin{theorem} [Theorems 3.2 and 3.3 of \cite{BNR2009}] \label{thm:CJVer} 
For given $\beta>0$, $\Re \delta>-1/2$ 
and $n\ge 1$ let $\mu=\mu_{n,\beta, \delta}^{\textup{cj}}$ be the random probability measure $\mu=\sum_{k=1}^n r_k \delta_{e^{i \theta_k}}$ on the unit circle where $(\theta_1,\dots, \theta_n)$ and $(r_1, \dots, r_n)$ are independent, the joint density of $\theta_k, 1\le k \le n$ is given by \eqref{eq:PDF_cjacobi}, and the joint density of $r_k, 1\le k \le n-1$ is given by $\frac{1}{C_{n,\beta}} \prod_{k=1}^n r_k^{\beta/2-1}$. In other words, $\mu$ is a probability measure where the support has distribution CJ$\beta$E, and the weights are Dirichlet$(\beta/2, \dots, \beta/2)$ distributed, independently of the support.

Then the modified Verblunsky coefficients $\gamma_0, \dots, \gamma_{n-1}$ of $\mu$ are independent, and $\gamma_k$ has distribution $\Theta(\beta(n-k-1)+1,\delta)$ for $0\le k\le n-1$.
\end{theorem}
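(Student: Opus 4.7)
The strategy is to reduce to the Killip--Nenciu theorem, which handles the $\delta=0$ case, and then to show that the $\delta$-dependent weight factors cleanly as a product over the modified Verblunsky coefficients. Killip--Nenciu determines the pushforward of the joint law of $(\theta_j,r_j)$ in the $\delta=0$ case (circular $\beta$-ensemble with independent Dirichlet weights) under the bijection $\mu\mapsto(\alpha_0,\dots,\alpha_{n-1})$: the Verblunsky coefficients become independent, with $\alpha_k$ having density proportional to $(1-|\alpha|^2)^{\beta(n-k-1)/2-1}$ on the open disc for $k<n-1$, and $\alpha_{n-1}$ uniform on the unit circle. This one computation already absorbs the Vandermonde factor, the Dirichlet weights, and the OPUC change-of-variables Jacobian.

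Next I would transfer to modified Verblunsky coefficients and establish the key product identity. By \eqref{eq:modifiedV}, conditionally on $\gamma_0,\dots,\gamma_{k-1}$ the map $\alpha_k\mapsto\gamma_k$ is complex conjugation composed with multiplication by a unit-modulus constant, so it is an isometry of $\CC$ preserving both planar Lebesgue measure on the disc and arc-length measure on the circle. Inducting on $k$, the $\gamma_k$ in the $\delta=0$ case are therefore also independent with $\gamma_k\sim\Theta(\beta(n-k-1)+1,0)$. The essential new ingredient for general $\delta$ is the identity
\begin{equation*}
\prod_{j=1}^n(1-e^{i\theta_j})=\prod_{k=0}^{n-1}(1-\gamma_k),
\end{equation*}
which I would prove by induction on $k$. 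Writing $b_k=\Phi_k(1)$ and noting $\Phi_k^*(1)=\overline{b_k}$, the Szeg\H{o} recursion \eqref{eq:Szego} at $z=1$ yields $b_{k+1}=b_k-\bar\alpha_k\,\overline{b_k}$; substituting $\bar\alpha_k=\gamma_k\prod_{j<k}\tfrac{1-\gamma_j}{1-\bar\gamma_j}$ from \eqref{eq:modifiedV} and the inductive hypothesis $b_k=\prod_{j<k}(1-\gamma_j)$ collapses this to $b_{k+1}=(1-\gamma_k)b_k$, and the $k=n$ case is the claim since $\Phi_n(1)=\prod_j(1-e^{i\theta_j})$.

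Combining: taking moduli and conjugates in the identity rewrites the $\delta$-dependent factor in \eqref{eq:PDF_cjacobi} as $\prod_k(1-\bar\gamma_k)^\delta(1-\gamma_k)^{\bar\delta}$, a product over the independent coordinates. Each marginal is thereby reweighted into the density of $\Theta(\beta(n-k-1)+1,\delta)$ from Definition \ref{def:Theta}, giving the claim. The main obstacle is the product identity for $\Phi_n(1)$; once it is in hand the rest is bookkeeping via the measure-preserving structure of the $\alpha_k\leftrightarrow\gamma_k$ transformation. A subtlety worth flagging is the boundary case $k=n-1$, where $|\gamma_{n-1}|=1$ and the density lives on the unit circle rather than the open disc, but the $a=0$ branch of Definition \ref{def:Theta} is tailored to this exact situation.
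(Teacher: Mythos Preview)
The paper does not actually prove this theorem; it is quoted from \cite{BNR2009} and used as input, so there is no ``paper's proof'' to compare against. That said, your outline is correct and is essentially the argument of Bourgade--Nikeghbali--Rouault: start from Killip--Nenciu for $\delta=0$, pass from $\alpha_k$ to $\gamma_k$ via the measure-preserving rotation/conjugation, and then use the product identity $\Phi_n(1)=\prod_{k}(1-\gamma_k)$ to factor the $\delta$-weight across the independent coordinates. Your inductive proof of $b_{k+1}=(1-\gamma_k)b_k$ is clean and exactly the right engine.

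One point deserves a sentence of care. From $\prod_j(1-e^{i\theta_j})=\prod_k(1-\gamma_k)$ you want to conclude
\[
\prod_j(1-e^{-i\theta_j})^{\delta}(1-e^{i\theta_j})^{\bar\delta}=\prod_k(1-\bar\gamma_k)^{\delta}(1-\gamma_k)^{\bar\delta},
\]
but complex powers do not automatically commute with products. Writing each factor as $|1-w|^{2\Re\delta}e^{2\Im\delta\,\arg(1-w)}$ with the principal branch, the moduli match immediately, while equality of the exponential parts requires $\sum_j\arg(1-e^{i\theta_j})=\sum_k\arg(1-\gamma_k)$ as real numbers, not merely modulo $2\pi$. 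Your recursion $b_{k+1}=(1-\gamma_k)b_k$ in fact gives this: tracking the continuous argument from $b_0=1$ yields $\arg b_n=\sum_k\arg(1-\gamma_k)$, and on the other side $\arg(1-e^{i\theta_j})=(\pi-\theta_j)/2\in(-\pi/2,\pi/2)$, so a short additional check (or analytic continuation from real $\delta$) closes the gap. This is a genuine technicality in \cite{BNR2009} as well, not a flaw in your strategy.
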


\begin{theorem}[Theorem 2 of \cite{KillipNenciu}, Proposition 4.5 in \cite{KK}] \label{thm:OrthVer}
For given $\beta>0$, $a, b>-1$ 
and $n\ge 1$ let $\mu=\mu_{2n,\beta, a, b}^{\textup{o}}$ be the random probability measure $\mu=\sum_{k=1}^n \frac12 r_k (\delta_{e^{i \theta_k}}+\delta_{e^{-i \theta_k}})$ on the unit circle where $(\theta_1,\dots, \theta_n)$ and $(r_1, \dots, r_n)$ are independent, the joint density of $\theta_k, 1\le k \le n$ is given by \eqref{eq:PDF_ortho}, and the joint density of $r_k, 1\le k \le n-1$ is given by $\frac{1}{C_{n,\beta}} \prod_{k=1}^n r_k^{\beta/2-1}$. 

Then the Verblunsky coefficients $\alpha_0, \dots, \alpha_{2n-1}$ corresponding to $\mu$ are real,  independent of each other. We have $\alpha_{2n-1}=-1$, and the distribution of $\alpha_k,0\le k\le 2n-2$ is given by  
\begin{align*}
    	  \alpha_{k}\sim \begin{cases}
	    \mathrm{\tl B}\big(
	\tfrac{\beta}{4}(2n-k+2a)
, \tfrac{\beta}{4}(2n-k+2b)
\big),\quad &\text{if $k$ is even,}\\
\mathrm{\tl B}\big(\tfrac{\beta}{4}(2n-k+2a+2b+1),
	\frac{\beta}{4}(2n-k-1)
	\big),\quad &\text{if $k$ is odd.}
	    \end{cases}
\end{align*}
Since all the Verblunsky coefficients are real, we have $\gamma_k=\alpha_k$ for all $0\le k\le 2n-1$.
\end{theorem}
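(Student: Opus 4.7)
The plan is to exploit the reflection symmetry of $\mu=\mu_{2n,\beta,a,b}^{\textup{o}}$ and reduce to the already established tridiagonal representation for the real Jacobi $\beta$-ensemble. Since the support of $\mu$ is symmetric under $z\mapsto\bar z$ with matching weights on conjugate pairs, we have $\bar\mu=\mu$, and it is a standard fact in OPUC theory (\cite{OPUC1}) that conjugation-invariant measures have real orthogonal polynomials, hence real Verblunsky coefficients; via \eqref{eq:modifiedV} this also gives $\gamma_k=\alpha_k$ for all $k$. For the final coefficient, the monic $(2n)$-th orthogonal polynomial is $\Phi_{2n}(z)=\prod_{j=1}^n (z^2-2\cos\theta_j\, z+1)$, which has constant term $1$. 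Evaluating the Szeg\H{o} recursion \eqref{eq:Szego} at $z=0$ yields $\Phi_{2n}(0)=-\bar\alpha_{2n-1}\Phi_{2n-1}^*(0)=-\bar\alpha_{2n-1}$ (since $\Phi_k^*(0)=1$ for monic $\Phi_k$), so $\alpha_{2n-1}=-1$.

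Next I would apply the change of variables $y_k=\cos\theta_k$ to \eqref{eq:PDF_ortho}. The factor $\prod|\cos\theta_j-\cos\theta_k|^\beta$ becomes the Vandermonde $\prod|y_j-y_k|^\beta$, the endpoint factors become $(1-y_k)^{\frac{\beta}{2}(a+1)-1/2}(1+y_k)^{\frac{\beta}{2}(b+1)-1/2}$, and the Jacobian $\prod(1-y_k^2)^{-1/2}$ from $d\theta_k=(1-y_k^2)^{-1/2}dy_k$ merges with the endpoint factors to yield, up to a constant,
\[
\prod_{j<k}|y_j-y_k|^\beta \prod_k (1-y_k)^{\frac{\beta}{2}(a+1)-1}(1+y_k)^{\frac{\beta}{2}(b+1)-1}
\]
on $(-1,1)^n$, the real Jacobi $\beta$-ensemble in the $[-1,1]$ parametrization. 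Carrying along the independent Dirichlet weights $r_k$, this realizes the pushforward $\nu=\sum_k r_k \delta_{y_k}$ as the random spectral measure of the Jacobi $\beta$-ensemble with the standard Dirichlet weight choice.

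Theorem 2 of \cite{KillipNenciu} then gives the joint distribution of the recursion coefficients $(a_j,b_j)$ of $\nu$: they are independent, and each $a_j^2$ and each appropriate affine transform of $b_j$ is Beta-distributed with explicit shape parameters in terms of $\beta,a,b,n,j$. To transfer back to OPUC I would invoke the classical Szeg\H{o} map between measures on $[-1,1]$ and conjugation-symmetric measures on the unit circle, which expresses $(a_j,b_j)$ as explicit algebraic functions of the real Verblunsky coefficients $(\alpha_k)$ of $\mu$: schematically,
\[
a_{j+1}^2=(1-\alpha_{2j-1})(1-\alpha_{2j}^2)(1+\alpha_{2j+1}),\qquad b_{j+1}=(1-\alpha_{2j-1})\alpha_{2j}-(1+\alpha_{2j-1})\alpha_{2j-2}
\]
(with boundary conventions such as $\alpha_{-1}=-1$). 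Inverting recursively and computing the Jacobian of this algebraic map converts the product of Betas in the $(a_j,b_j)$ variables into a product of $\tl{\mathrm B}$ densities in the $(\alpha_k)$ variables, and the parity split in the statement reflects exactly which Verblunsky indices enter $a_j$ (odd $k$) versus $b_j$ (even $k$).

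The main obstacle is the Jacobian calculation in the last step: one must verify that the $2n-1$ algebraic relations between $(a_j,b_j)$ and $(\alpha_k)$ have a sufficiently triangular structure to invert recursively, and that the resulting Jacobian factor combines with the product of Betas precisely to produce the shape parameters appearing in the theorem, with the correct half-integer shifts between the even and odd cases. This is the algebraic content of \cite{KillipNenciu} and Proposition 4.5 of \cite{KK}, requiring no new probabilistic input beyond the Killip--Nenciu tridiagonal theorem on $[-1,1]$ together with the Szeg\H{o} bijection.
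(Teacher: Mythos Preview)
The paper does not supply its own proof of this theorem: it is stated with attribution to Theorem~2 of \cite{KillipNenciu} and Proposition~4.5 of \cite{KK}, and is used as input without further argument. So there is no paper proof to compare against.

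That said, your outline is a faithful sketch of how those references actually establish the result. The symmetry argument for real $\alpha_k$ and the constant-term computation for $\alpha_{2n-1}=-1$ are correct and standard. The reduction via $y_k=\cos\theta_k$ to the Jacobi $\beta$-ensemble on $[-1,1]$, followed by the Szeg\H{o} map relating the three-term recurrence coefficients $(a_j,b_j)$ to the real Verblunsky coefficients, is exactly the mechanism in \cite{KillipNenciu}. Your honest identification of the Jacobian/parameter-matching step as the real work is accurate: that computation is the substance of the cited results, and you have correctly located it rather than attempted to reproduce it. One minor caution: the precise form of the Szeg\H{o}-map relations and the boundary conventions (e.g.\ the value of $\alpha_{-1}$) vary across sources, so if you were to carry out the Jacobian calculation in full you would need to fix a convention and track it carefully; your ``schematic'' formulas are of the right shape but should be checked against a fixed reference before use.
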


Theorems \ref{thm:CJVer} and \ref{thm:OrthVer} together with Proposition \ref{prop:unitary_repr} provide random Dirac operator representations for the CJ$\beta$E and RO$\beta$E. 

\begin{definition}
We denote by  $\CJop$ the random Dirac operator constructed from the random probability measure $\mu_{n,\beta, \delta}^{\textup{cj}}$ of Theorem \ref{thm:CJVer} using Proposition \ref{prop:unitary_repr}.
We denote by  $\ROop$ the random Dirac operator constructed from the random probability measure $\mu_{2n,\beta, a, b}^{\textup{o}}$ of Theorem \ref{thm:OrthVer} using Proposition \ref{prop:unitary_repr} .
\end{definition}


The modified Verblunsky coefficients are independent for both $\mu_{n,\beta, \delta}^{\textup{cj}}$ and $\mu_{2n,\beta, a, b}^{\textup{o}}$. Hence the sequence $x_k+i y_k$ defined by the recursion \eqref{xyrec} is a Markov chain for both of these random measures. The generating paths of the $\CJop$ and $\ROop$ operators are just these Markov chains embedded into continuous time.


\subsection{The limiting operators}
\label{subs:limitOp}

As we show below, the generating paths of both $\CJop$ and $\ROop$ approximate certain diffusions in $\HH$, and the operators themselves approximate the Dirac operators built from these diffusions. In this section we introduce the two limiting operators. 

For the rest of the paper, we set 
\begin{align}
    \upsilon_\beta(t)=-\tfrac{4}{\beta}\log(1-t). \label{eq:timechange}
\end{align} 

\subsubsection*{Hua-Pickrell operator}

Fix $\beta>0$ and $\delta\in\CC$ with $\Re \delta>-1/2$. Let $B_1, B_2$ be independent standard Brownian motion, and let $x_t+i y_t, t\ge 0$ be the strong solution of the SDE
\begin{align}\label{eq:HPsde}
dy=\left( -\Re \delta dt +  dB_1 \right)y,\quad dx=\left(\Im \delta dt +  dB_2 \right)y, \quad y(0)=1, x(0)=0.
\end{align}

\begin{proposition}[Proposition 31 of \cite{BVBV_op}] \label{prop:hpoperator}
	Let $x(t)+i y(t)$ be defined via \eqref{eq:HPsde}. The limit $q=\lim_{t\to \infty} x(t)$  exists, and it is non-zero with probability one.
	Define $\tl x(t)=x(\upsilon_\beta(t))$,$\tl y(t)=y(\upsilon_\beta(t))$, and set $\uu_0=[1,0]^t$, $\uu_1=[-q,-1]^t$. 
	Then the random Dirac operator 
	$\Huaop=\Dirop(\tl x+i \tl y, \uu_0, \uu_1)$ satisfies the assumptions of Section \ref{subs:Diracop}. 
\end{proposition}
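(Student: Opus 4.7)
The claim splits into three pieces: (a) the limit $q=\lim_{t\to\infty}x(t)$ exists a.s.; (b) $q\neq 0$ a.s.; and (c) the triple $(\tl x+i\tl y,\uu_0,\uu_1)$ satisfies both the normalization \eqref{eq:u_assumption} and the integrability Assumption~\ref{assumption:1}. The identity $\uu_0^t J\uu_1=1$ is immediate since $J\uu_1=[1,-q]^t$.

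\textbf{Existence and nondegeneracy of $q$.} Applying It\^o's formula to $\log y$ in \eqref{eq:HPsde} yields the explicit solution
\[
y(t)=\exp\bigl(-(\Re\delta+\tfrac12)t+B_1(t)\bigr).
\]
Since $\Re\delta+\tfrac12>0$, the law of the iterated logarithm gives $y(t)=O(e^{-ct})$ a.s.\ for every $c<\Re\delta+\tfrac12$, so $\int_0^\infty y(s)\,ds+\int_0^\infty y(s)^2\,ds<\infty$ a.s. The drift part $\Im\delta\int_0^t y\,ds$ of $x$ converges by dominated convergence, and the martingale part $\int_0^t y\,dB_2$ converges by Dambis--Dubins--Schwarz applied to its total quadratic variation. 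Conditioning on $\sigma(B_1)$ makes $y$ deterministic, so $q$ becomes a Gaussian with variance $\int_0^\infty y^2\,ds>0$ a.s.; hence $\Pr(q=0\mid\sigma(B_1))=0$ almost surely.

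\textbf{Integrability of the weight.} Direct algebra gives
\[
\uu_0^t R\uu_0=\tfrac{1}{2\tl y},\qquad \uu_1^t R\uu_1=\tfrac{(\tl x-q)^2+\tl y^2}{2\tl y},\qquad \|R\uu_1\|\le C\,\tfrac{|\tl x-q|+\tl y^2}{\tl y},
\]
and under the substitution $t=1-e^{-\beta u/4}$ one has $\tl y(t(u))=y(u)$ and $dt=\tfrac{\beta}{4}e^{-\beta u/4}\,du$. The key quantitative input is the pathwise a.s.\ bound $|q-x(u)|\le C_\omega\,y(u)$ for all large $u$, obtained from
\[
q-x(u)=\Im\delta\int_u^\infty y\,ds+\int_u^\infty y\,dB_2
\]
by factoring $y(u)$ out of each tail integral: the drift ratio is a Dufresne-type random variable $\int_0^\infty e^{-(\Re\delta+1/2)v+\tilde B(v)}\,dv$, and the martingale ratio is handled by DDS together with $\int_u^\infty y^2\,ds\le C_\omega\,y(u)^2$ a.s. Given this bound, the first inequality in \eqref{eq:int_assumption} reduces to $\int_0^\infty y(u)\,e^{-\beta u/4}\,du<\infty$, which follows from the exponential decay of $y$. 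For the double integral, the inner term $\int_0^{t(u)}\uu_0^t R\uu_0\,ds$ becomes $\tfrac{\beta}{8}\int_0^u y(v)^{-1}e^{-\beta v/4}\,dv$; in either regime of $\Re\delta+\tfrac12-\tfrac{\beta}{4}$ this is dominated by $1+C y(u)^{-1}e^{-\beta u/4}$, and multiplying by $\uu_1^t R\uu_1\cdot\tfrac{\beta}{4}e^{-\beta u/4}\sim y(u)\,e^{-\beta u/4}$ produces an exponentially decaying (hence integrable) tail.

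\textbf{Main obstacle.} The delicate step is the integrability in (c). Since $\mathbb E[y(u)^{-1}]=e^{(\Re\delta+1)u}$ and $\mathbb E[y(u)^2]$ can grow exponentially when $\Re\delta<\tfrac12$, one cannot use $L^p$ estimates directly; pathwise bounds based on the LIL and the stationary ratio identity $y(u)^{-1}\int_u^\infty y\,ds\ed Z$ (with $Z$ a Dufresne variable) are essential to preserve the right balance of exponents. Once the key inequality $|q-x(u)|\lesssim y(u)$ is in place, the remaining verification is straightforward bookkeeping.
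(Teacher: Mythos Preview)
Your argument for parts (a) and (b) is fine, and the overall plan for (c) is the right one. The gap is in the ``key quantitative input'': the pathwise bound $|q-x(u)|\le C_\omega\,y(u)$ for all large $u$ is false, as are the companion claims $\int_u^\infty y^2\,ds\le C_\omega\,y(u)^2$ and $\int_0^u y(v)^{-1}e^{-\beta v/4}\,dv\lesssim 1+y(u)^{-1}e^{-\beta u/4}$. The ratio $Z_u:=y(u)^{-1}\int_u^\infty y(s)\,ds=\int_0^\infty e^{-(\Re\delta+1/2)v+(B_1(u+v)-B_1(u))}\,dv$ is, for each fixed $u$, a Dufresne variable, but as a process in $u$ it is stationary and ergodic (it is a measurable functional of the shifted Brownian path). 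Since its marginal has unbounded support, ergodicity forces $\limsup_{u\to\infty}Z_u=+\infty$ a.s.; the same reasoning kills the other two uniform bounds. In short, factoring out $y(u)$ gives the right \emph{distribution} of the ratio, but not a uniform pathwise estimate.

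The fix is to replace these sharp ratio bounds by the $\varepsilon$-weakened estimates that the paper records in \eqref{ineq:hpsde} (coming from the proof in \cite{BVBV_op}): for every small $\varepsilon>0$ there is a random $C$ with
\[
C^{-1}(1-t)^{c_\delta+\varepsilon}\le \tl y(t)\le C(1-t)^{c_\delta-\varepsilon},\qquad |q-\tl x(t)|\le C(1-t)^{c_\delta-\varepsilon},\qquad c_\delta=\tfrac{4}{\beta}(\Re\delta+\tfrac12),
\]
equivalently $e^{-(\Re\delta+1/2+\varepsilon)u}\lesssim y(u)\lesssim e^{-(\Re\delta+1/2-\varepsilon)u}$ and $|q-x(u)|\lesssim e^{-(\Re\delta+1/2-\varepsilon)u}$. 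These follow directly from the law of the iterated logarithm applied to $B_1$ (for $y$) and to the DDS Brownian motion of $\int_0^\cdot y\,dB_2$ (for $q-x$), with no need for the stationary-ratio identity. Plugging them into your expressions for $\uu_0^tR\uu_0$, $\uu_1^tR\uu_1$ and $\|R\uu_1\|$ gives, after the same change of variables, integrands bounded by $(1-t)^{c_\delta-3\varepsilon}$ and $(1-t)^{1-4\varepsilon}$ in the two regimes of $c_\delta$ versus $1$, which are integrable for $\varepsilon$ small. So the bookkeeping part of your argument survives once the correct exponents are in place.
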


We record the following estimates for $\tl x, \tl y$ from the proof of Proposition 31 of \cite{BVBV_op}. 
For any $\eps>0$ small there exists a random finite $C=C(\eps)$ such that
\begin{equation}\label{ineq:hpsde}
	C^{-1}(1-t)^{\frac{4}{\beta}(\Re\delta+\frac12+\eps)}\leq \tl y(t)\leq C(1-t)^{\frac{4}{\beta}(\Re\delta+\frac12-\eps)},\quad |q-\tl x(t)|\leq C(1-t)^{\frac{4}{\beta}(\Re\delta+\frac12-\eps)}.
\end{equation}
The distribution of $q=\lim_{t\to\infty} x(t)$  was identified in \cite{BCFY2001}. 
\begin{definition}\label{def:Pearson}
For $m>1/2$ and $\mu\in \R$ we denote by $P_{IV}(m,\mu)$ the distribution of the (unscaled) Pearson type IV distribution on $\R$ that has density function 
\begin{align}
    \frac{2^{2m-2}|\Gamma(m+\frac{\mu}{2} i)|^2}{\pi \, \Gamma(2m-1)} (1+x^2)^{-m}e^{-\mu \arctan x}.
\end{align}
\end{definition}

\begin{theorem}[\cite{BCFY2001}]\label{thm:q}
 The random variable $q$ in Proposition \ref{prop:hpoperator} has $P_{IV}(\Re \delta+1,-2 \Im \delta)$ distribution.
\end{theorem}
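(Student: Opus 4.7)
My plan is to identify the density of $q = \lim_{t\to\infty}x(t)$ by constructing a family of bounded $L$-harmonic kernels on $\HH$ indexed by $a\in\R$, whose normalized boundary values at the boundary point $(a,0)$ approximate Dirac deltas, and then applying optional stopping to bounded martingales of the form $u(x_t,y_t;a)$. The generator of the SDE \eqref{eq:HPsde} is
\begin{align*}
L = \tfrac{y^2}{2}(\partial_x^2+\partial_y^2) + y\,\Im\delta\,\partial_x - y\,\Re\delta\,\partial_y.
\end{align*}

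The first step is to search for positive $L$-harmonic functions of the ansatz $u(x,y;a) = y^\alpha \rho^{-\beta}e^{\phi\theta}$, where $\rho = \sqrt{(x-a)^2+y^2}$ and $\theta = \arg((x-a)+iy) \in (0,\pi)$ are polar coordinates centered at $a$. Computing $Lu$ in these coordinates and matching coefficients of $\sin^2\theta$, $\sin\theta\cos\theta$, and the $\theta$-independent part gives three algebraic equations in $(\alpha,\beta,\phi)$ whose nontrivial solution is $\alpha = 2\Re\delta+1$, $\beta = 2(\Re\delta+1)$, $\phi = 2\Im\delta$, yielding
\begin{align*}
u(x,y;a) = \frac{y^{2\Re\delta+1}}{((x-a)^2+y^2)^{\Re\delta+1}}\exp\bigl(2\Im\delta\cdot\arg((x-a)+iy)\bigr).
\end{align*}

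Next I would establish an approximate-identity property. The substitution $a = x - y\cot\phi$ gives $\int_\R u(x,y;a)\,da = \kappa := \int_0^\pi \sin^{2\Re\delta}\phi\,e^{2\Im\delta\phi}\,d\phi$, a finite positive constant (integrability at $\phi = 0,\pi$ uses the hypothesis $\Re\delta > -1/2$). Hence $u/\kappa$ is a probability density in $a$ for every $(x,y)\in\HH$. For any bounded continuous $f\colon\R\to\R$, set $h_f(x,y) = \kappa^{-1}\int f(a)u(x,y;a)\,da$; linearity gives $Lh_f = 0$ and $|h_f|\le\|f\|_\infty$, so by It\^o's formula $h_f(x_t,y_t)$ is a bounded true martingale. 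The same polar substitution rewrites
\begin{align*}
h_f(x,y) = \kappa^{-1}\int_0^\pi f(x-y\cot\phi)\sin^{2\Re\delta}\phi\,e^{2\Im\delta\phi}\,d\phi,
\end{align*}
so dominated convergence yields $h_f(x,y) \to f(x_0)$ whenever $(x,y)\to(x_0,0)$.

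Combining this with $x_t\to q$ and $y_t\to 0$ a.s.\ from Proposition~\ref{prop:hpoperator}, the bounded convergence theorem gives $\ev[f(q)] = h_f(0,1) = \kappa^{-1}\int f(a)u(0,1;a)\,da$, so $q$ has density $u(0,1;a)/\kappa$. Since $\arg(-a+i) = \tfrac{\pi}{2}+\arctan a$, this density is proportional to $(1+a^2)^{-(\Re\delta+1)}e^{2\Im\delta\arctan a}$, which is precisely the $P_{IV}(\Re\delta+1,-2\Im\delta)$ density; evaluating $\kappa$ via the Legendre duplication formula confirms the normalization in Definition~\ref{def:Pearson}. The main obstacle is guessing the correct ansatz for the $L$-harmonic kernel: once the form $y^\alpha\rho^{-\beta}e^{\phi\theta}$ is posited, the polar computation and the martingale argument are mechanical, but recognizing that the appropriate object is a complex power of the classical Poisson kernel of $\HH$, twisted by the argument function, is the conceptual heart of the proof.
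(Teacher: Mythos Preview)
Your argument is correct. The paper itself does not supply a proof of this theorem: it is simply attributed to \cite{BCFY2001}, so there is no ``paper's own proof'' to compare against. What you have written is essentially a reconstruction of the method in that reference, which identifies the hitting distribution of a drifted hyperbolic Brownian motion on $\partial\HH$ via a twisted Poisson-type kernel. Your computation of the generator $L$, the harmonic kernel
\[
u(x,y;a)=\frac{y^{2\Re\delta+1}}{\bigl((x-a)^2+y^2\bigr)^{\Re\delta+1}}\exp\bigl(2\Im\delta\cdot\arg((x-a)+iy)\bigr),
\]
the approximate-identity property of $a\mapsto u(x,y;a)/\kappa$, and the bounded-martingale/optional-stopping step are all sound; the only inputs you need from the paper are that $y_t\to 0$ (immediate from $y_t=\exp(B_1(t)-(\Re\delta+\tfrac12)t)$ and $\Re\delta>-\tfrac12$) and $x_t\to q$ a.s., both of which are recorded in and around Proposition~\ref{prop:hpoperator}.

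One small cosmetic point: in your ansatz you use the symbol $\beta$ for the exponent of $\rho$, which clashes with the global parameter $\beta>0$ of the paper; if this were to be inserted into the text you would want to rename that exponent. Also, the constant $e^{\pi\Im\delta}$ that appears when you evaluate $\arg(-a+i)=\tfrac{\pi}{2}+\arctan a$ is correctly absorbed into the normalization, but it is worth making that explicit rather than leaving it implicit in ``proportional to''.
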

There is an interesting connection between the distributions $P_{IV}$ and $\Theta$: the map $z(e^{i \theta})= - \cot(\theta/2)$ transforms  $\Theta(1,\delta)$ into $P_{IV}(\Re \delta+1,-2 \Im \delta)$. The map  $z$ can be extended to the conformal map $w\to i\frac{w+1}{-w+1}$  from $\{|w|\le 1\}$ to $\{\Im z>0\}$, which provides an isometry between the unit disk and half-plane representations of the hyperbolic plane.  
In other words,  $\Theta(1,\delta)$ and $P_{IV}(\Re \delta+1,-2 \Im \delta)$ are  different representations of the same distribution on the boundary of the hyperbolic plane.

\subsubsection*{Hard edge operator}

The point process scaling limit of the Laguerre $\beta$-ensemble near the hard edge was identified by Ram\'{\i}rez and Rider in \cite{RR} as the  spectrum of the following random Sturm-Liouville differential operator: 
\begin{align}\label{eq:hardop}
\mathfrak{G}_{\beta,a} f(x)= -e^{(a+1)x+\frac{2}{\sqrt{\beta}}W(x)} \partial_x \left(e^{-ax-\frac{2}{\sqrt{\beta}}W(x)} \partial_x \, f(x) \right).
\end{align}
Here $W(x)$ is a standard Brownian motion, and the operator acts on functions $[0,\infty)\to \R$ with Dirichlet boundary condition at 0  and Neumann boundary condition at $\infty$. 

 \cite{BVBV_op} provided a Dirac operator representation for $\mathfrak{G}_{\beta,a}$, we summarize the result below.

\begin{proposition}[Theorem 30 of \cite{BVBV_op}]\label{prop:hardedge}
Fix $\beta>0,a>-1$, and let $B$ be a standard Brownian motion. We set  $y(t)=e^{-\frac{\beta}{4}(2a+1)t-B(2t)}$, $\tl y(t)=y(\upsilon_\beta(t))$, $\uu_0=[1,0]^t$, and $\uu_1=[0,-1]^t$.

Then the operator $\Bessop:=\Dirop (i \tl y, \uu_0, \uu_1)$ satisfies the assumptions of Section \ref{subs:Diracop}, 
and its spectrum is symmetric about 0: $\lambda_{-k}=-\lambda_{k-1}, k\ge 1$.

Moreover, the set $\{\tfrac{1}{16} \lambda_0^2, \tfrac{1}{16}\lambda_1^2, \dots\}$ has the same distribution as the spectrum of the hard edge operator $\mathfrak{G}_{\beta,a}$ defined in \eqref{eq:hardop}

\end{proposition}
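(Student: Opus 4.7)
The plan is to verify three assertions in turn: the operator-theoretic assumptions, the spectral symmetry, and the distributional identity with $\mathfrak{G}_{\beta,a}$. Since $x\equiv 0$, the weight function is diagonal: $R(t)=\tfrac12\operatorname{diag}(\tl y(t)^{-1},\tl y(t))$, so $\uu_0^t R \uu_0 = 1/(2\tl y)$, $\uu_1^t R \uu_1 = \tl y/2$, and $\|R\uu_1\|=\tl y/2$. The a.s.\ estimate $B(2\upsilon_\beta(t))=o(\upsilon_\beta(t))$ combined with $\upsilon_\beta(t)=-\tfrac{4}{\beta}\log(1-t)$ gives $\tl y(t)=(1-t)^{2a+1+o(1)}$ as $t\to 1$. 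Since $a>-1$, we have $\int_0^1 \tl y\,dt < \infty$, and a short case analysis (comparing $2a+1$ with $1$) shows $\int_0^1 \tl y(t)\int_0^t \tl y(s)^{-1}\,ds\,dt<\infty$ as well: in each case the outer integrand near $t=1$ is comparable to $(1-t)$ or $(1-t)\log(1/(1-t))$. Both parts of Assumption \ref{assumption:1} are therefore satisfied almost surely.

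For the spectral symmetry, let $\sigma_3=\operatorname{diag}(1,-1)$. Since $R$ is diagonal, $\sigma_3 R = R\sigma_3$, and one checks $J\sigma_3=-\sigma_3 J$; hence if $\Bessop f=\lambda f$ then $\Bessop(\sigma_3 f)=-\lambda\sigma_3 f$. The boundary conditions require the second component of $f$ to vanish at $t=0$ and the first component at $t=1$, both of which are preserved by multiplication by $\sigma_3$. The spectrum is therefore symmetric about $0$ and admits the labelling $\lambda_{-k}=-\lambda_{k-1}$.

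To relate $\Bessop$ to $\mathfrak{G}_{\beta,a}$, write an eigenfunction as $f=(f_1,f_2)^t$; the eigenvalue equation reads $f_1'=(\lambda \tl y/2)f_2$, $f_2'=-(\lambda/(2\tl y))f_1$. Eliminating $f_1=-2\tl y f_2'/\lambda$ produces the Sturm-Liouville equation
\[
-(\tl y f_2')' = \tfrac{\lambda^2}{4}\,\tl y\,f_2, \qquad f_2(0)=0,\quad (\tl y f_2')(1)=0.
\]
Apply the change of variable $s=\upsilon_\beta(t)$ (mapping $[0,1)\to[0,\infty)$) and set $\Phi(s)=f_2(t)$. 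Using $ds/dt = 4/(\beta(1-t))$ and $\tl y(t)=y(s)=e^{-\beta(2a+1)s/4-B(2s)}$, a direct computation rewrites the equation as
\[
-\frac{d}{ds}\bigl[e^{-\beta a s/2 - B(2s)}\Phi'(s)\bigr] = \frac{\lambda^2\beta^2}{64}\,e^{-\beta(a+1)s/2 - B(2s)}\,\Phi(s).
\]
A second change of variable $x=\beta s/2$, combined with the Brownian scaling coupling $B(4x/\beta)=\tfrac{2}{\sqrt\beta}W(x)$ for a standard Brownian motion $W$, turns the exponential coefficients into $e^{-ax-\tfrac{2}{\sqrt\beta}W(x)}$ and $e^{-(a+1)x-\tfrac{2}{\sqrt\beta}W(x)}$, yielding exactly the eigenvalue equation of $\mathfrak{G}_{\beta,a}$ at eigenvalue $\Lambda=\lambda^2/16$. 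Running this construction in both directions (inverting the changes of variable to build a $\Bessop$-eigenfunction from a $\mathfrak{G}_{\beta,a}$-eigenfunction) provides a bijection between the positive spectrum of $\Bessop$ and the spectrum of $\mathfrak{G}_{\beta,a}$, proving the final claim in distribution.

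The main obstacle is the careful handling of the random coefficients in the Sturm-Liouville manipulations together with the matching of self-adjoint boundary conditions through the changes of variable. In particular, verifying that the Neumann-at-$\infty$ condition for $\mathfrak{G}_{\beta,a}$ corresponds precisely to $f_1(1)=0$ for $\Bessop$ will require a Weyl limit-point/limit-circle analysis at the right endpoint, combined with the growth estimates on $\tl y$ near $t=1$ recorded in \eqref{ineq:hpsde}-style form.
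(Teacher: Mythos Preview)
The paper does not give a self-contained proof of this proposition: it is cited as Theorem~30 of \cite{BVBV_op}, and the only argument supplied here is the Remark immediately following, which explains that \cite{BVBV_op} states the result for $\Besselop=\Dirop(i\tl y^{-1},\uu_1,\uu_0)$ and that conjugating $\Bessop$ by the permutation matrix swapping the two coordinates yields $-\Besselop$, so the spectra coincide.

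Your route is genuinely different: rather than reducing to the cited result, you attempt the argument from scratch. The pieces you do carry out are correct. The weight function is indeed $R=\tfrac12\operatorname{diag}(\tl y^{-1},\tl y)$, and the integrability checks go through (one small imprecision: for $-1<a<0$ the outer integrand near $t=1$ is of order $(1-t)^{2a+1+o(1)}$, not $(1-t)$, but this is still integrable since $2a+1>-1$). The $\sigma_3$-conjugation for spectral symmetry is clean and correct. The elimination of $f_1$, the change of variable $s=\upsilon_\beta(t)$, and the Brownian rescaling $x=\beta s/2$ are all computed correctly and do produce the eigenvalue equation of $\mathfrak G_{\beta,a}$ at $\Lambda=\lambda^2/16$.

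The gap you flag in your last paragraph is real and is precisely where the substance lies. A formal match of eigenvalue ODEs does not by itself give equality of spectra as self-adjoint operators: one must check that the self-adjoint realizations correspond through the change of variables, which amounts to the limit-point/limit-circle analysis at the right endpoint and the identification of the ``Neumann at $\infty$'' condition for $\mathfrak G_{\beta,a}$ with $\lim_{t\to 1}(\tl y f_2')(t)=0$. This is exactly what is done in Theorem~30 of \cite{BVBV_op}; the paper's approach of citing that result and then applying the permutation conjugation is what buys it the right to skip this step.
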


\begin{remark}
Theorem 30 of \cite{BVBV_op} is 
stated in a slightly different (but equivalent) way.  With the notations of Proposition \ref{prop:hardedge} the statement of that theorem is about the operator $\Besselop=\Dirop(i \tl y^{-1}, \uu_1, \uu_0)$. Note however that conjugating $\Bessop$ with the permutation matrix transposing the first and second coordinate in $\R^2$ gives $-\Besselop$, and since the spectra of $\Bessop$ and $\Besselop$ are symmetric about 0, the statement of the proposition follows. 
\end{remark}

\section{Precise results} \label{s:results}

We are  ready to state our results in a precise form. 

\subsection{Convergence of random operators and normalized characteristic polynomials}
\label{subs:Limits}

\begin{theorem}\label{thm:CJ_op_limit}
Fix $\beta>0$ and $\Re \delta>-1/2$. Then there is a coupling of the random operators $\CJop, n\ge 1$ and $\Huaop$ so that $\|\res \CJop-\res \Huaop\|_{HS}$ and $\intr_{\CJop}-\intr_{\Huaop} $ both converge to 0 almost surely as $n\to \infty$.  
\end{theorem}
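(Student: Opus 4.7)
The plan is to build an explicit coupling of the generating paths of $\CJop$ and $\Huaop$, show almost sure pathwise convergence on compact subintervals of $[0,1)$ together with uniform tail estimates near $t=1$ that mirror \eqref{ineq:hpsde}, and then translate this path-level information into the stated operator-level statements via the explicit formulas \eqref{Dir:inverse} and \eqref{eq:int_tr}. The strategy is the one pioneered in \cite{BVBV_op} for the circular $\beta$-ensemble, adapted to include the Hua--Pickrell parameter $\delta$.

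To set up the coupling, recall from Theorem~\ref{thm:CJVer} that $\gamma_k\sim \Theta(\beta(n-k-1)+1,\delta)$ independently. For $k$ with $\beta(n-k)$ large, Definition~\ref{def:Theta} shows this law is concentrated near $0$ with fluctuations of order $(\beta(n-k))^{-1/2}$ and a drift of order $\delta/(\beta(n-k))$ coming from the factor $(1-z)^{\bar\delta}(1-\bar z)^{\delta}$. Expanding \eqref{def:wv} gives $w_k\approx -2\Re\gamma_k$, $v_k\approx -2\Im\gamma_k$, and plugging into \eqref{xyrec} yields
\begin{align*}
\log\frac{y_{k+1}}{y_k}\approx -2\Re\gamma_k+O(|\gamma_k|^2),\qquad \frac{x_{k+1}-x_k}{y_k}\approx -2\Im\gamma_k.
\end{align*}
Under the time change $s=\upsilon_\beta(k/n)$ the step size $\Delta s\approx \tfrac{4}{\beta(n-k)}$ is exactly the variance scale of $2\gamma_k$, and the leading drift terms match $-\Re\delta\,ds$ and $\Im\delta\,ds$ from \eqref{eq:HPsde}. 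I would realize each $\gamma_k$ via a Skorokhod/strong-embedding from the Brownian increments $B_i(s_{k+1})-B_i(s_k)$, $i=1,2$, driving $\Huaop$, so that standard discretization estimates give the uniform bound $\max_{k/n\le 1-\eta}\bigl(|\log y_k-\log y(s_k)|+|x_k-x(s_k)|\bigr)\to 0$ almost surely for every fixed $\eta>0$.

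The delicate part is the region $k/n\to 1$, where $\Theta(\beta(n-k-1)+1,\delta)$ degenerates to $\Theta(1,\delta)$ on the unit circle and the diffusion itself is singular. Mirroring \eqref{ineq:hpsde}, I would prove that for every small $\eps>0$ there is an a.s.\ finite random $C=C(\eps)$ and $N=N(\eps)$ so that for all $n\ge N$ and all $0\le k\le n$,
\begin{align*}
C^{-1}\bigl(1-\tfrac{k}{n}\bigr)^{\tfrac{4}{\beta}(\Re\delta+\frac12+\eps)}\le y_k\le C\bigl(1-\tfrac{k}{n}\bigr)^{\tfrac{4}{\beta}(\Re\delta+\frac12-\eps)},\quad |x_n-x_k|\le C\bigl(1-\tfrac{k}{n}\bigr)^{\tfrac{4}{\beta}(\Re\delta+\frac12-\eps)}.
\end{align*}
The bounds on $\log y_k$ come from a super/submartingale decomposition against $\upsilon_\beta$ using exponential moment estimates for the $\Theta$ increments, and the $x$-bound follows by summing $v_k y_k$ and using the $y_k$-bound. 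As a byproduct one gets $x_n\to q$ almost surely, so that the boundary vector $\uu_1^{(n)}=[-x_n,-1]^t$ of $\CJop$ converges to the $\uu_1=[-q,-1]^t$ of $\Huaop$ in the coupling.

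Passing to the conclusion is then bookkeeping on \eqref{Dir:inverse} and \eqref{eq:int_tr}: I would split the Hilbert--Schmidt integral and the integral trace as $[0,1-\eta]$ (handled by the uniform path convergence together with the fact that $R$ is built polynomially from $x,y$) plus its complement (handled by the common polynomial-in-$(1-t)$ tail bound, integrable when $\eps$ is small), and then let $\eta=\eta(n)\to 0$ slowly. The main obstacle I expect is exactly the uniform-in-$n$ tail estimate near $t=1$: the $O(|\gamma_k|^2)$ correction in the expansion of $\log y_{k+1}/y_k$ is no longer negligible when $|\gamma_k|$ is of order one, so the naive Gaussian approximation breaks down in the very region that controls integrability of the kernel, and one needs a careful monotone comparison with a discrete analogue of the SDE to push the $(1-t)^{\alpha}$ bounds up to the boundary with constants independent of $n$. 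Once that is in place, the HS and integral-trace convergences follow routinely.
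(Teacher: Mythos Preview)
Your strategy is sound in outline, but it differs meaningfully from the paper's, and in one place you underestimate the work.

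The paper does \emph{not} construct an explicit coupling via strong embedding. Instead it separates the problem into three soft pieces: (i) convergence \emph{in distribution} of the discrete generating paths $x^{(n)}+iy^{(n)}$ to the diffusion path on $[0,1)$ in the Skorohod topology, obtained from an Ethier--Kurtz type criterion (Proposition~\ref{prop:as1check}, checked in Proposition~\ref{prop:cjpath}); (ii) for each fixed $n$, existence of a tight sequence of random constants $\kappa^{(n)}$ controlling $y^{(n)}$ and $|x^{(n)}_n-x^{(n)}_k|$ as in \eqref{eq:xyfluc_bound}--\eqref{eq:xyfluc_bound_x} (Propositions~\ref{prop:y_cj} and~\ref{prop:xtight}); and (iii) a deterministic lemma (Proposition~\ref{prop:detlim}) saying that pointwise path convergence plus uniform tail bounds forces convergence of resolvents and integral traces. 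Skorohod's representation theorem then manufactures the coupling a posteriori (Corollary~\ref{cor:randomDirac}). The advantage is that one never needs a \emph{single} almost-sure constant $C$ valid for all $n$ simultaneously, only tightness of $\kappa^{(n)}$; and one never needs almost-sure path convergence in a specific coupling, only distributional convergence. Your explicit-embedding route would also work but buys you nothing extra here and is technically heavier.

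Separately, your sketch of the $x$-bound (``follows by summing $v_k y_k$ and using the $y_k$-bound'') is too optimistic. Since $v_k$ has mean of order $(n-k)^{-1}$ and fluctuations of order $(n-k)^{-1/2}$, naively summing $|v_k|y_k$ against the upper bound on $y_k$ gives a divergent series unless $\Re\delta$ is large. The paper (Proposition~\ref{prop:xtight}) instead writes $v_k=z_k(2+w_k)$ with $z_k$ independent of the $y$-walk, centers $z_k$, conditions on the $y$-path, and controls the resulting martingale via Doob's maximal inequality combined with a geometric blocking of the index set to absorb the factor $(1-j/n)^{-c'}$. This is the genuinely nontrivial step, and it is where your outline would need to be fleshed out.
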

From Theorem \ref{thm:CJ_op_limit} and Proposition \ref{prop:comp} we immediately get the following corollary. 

\begin{corollary}\label{cor:CJ_lim}
Consider the coupling of Theorem \ref{thm:CJ_op_limit}. Denote by $\Lambda_n$ the eigenangles of $\CJop$ inside $(-\pi,\pi]$, and let $\lambda_{k,n}, k\in \Z$ be the sequence of ordered elements of the set $n \Lambda_n+2\pi n \Z$ with $\lambda_{-1,n}<0< \lambda_{0,n}$.
Let $p_n(z)$ be the normalized characteristic polynomial of $\Lambda_n$ defined via \eqref{eq:charpol}. Denote by $\HPb=\{\lambda_{k,\ttHP}, k\in \Z\}$  the ordered spectrum of the operator $\Huaop$, and by $\zeta^{\ttHP}_{\beta, \delta}$ the secular function of $\Huaop$. Then 
\begin{align}
    \sum_k |\lambda_{k,n}^{-1}-\lambda_{k,\ttHP}^{-1}|^2\to 0 \qquad &\text{almost surely as $n\to \infty$},\\
|p_n(e^{iz/n})e^{-iz/2}-\zeta^{\ttHP}_{\beta, \delta}(z)|\to 0 \qquad &\text{almost surely, uniformly on compacts as $n\to \infty$}. 
\end{align}
In particular, if $\Lambda_n\sim \textup{CJ}_{n,\beta,\delta}$ then $n\Lambda_n \Rightarrow \HPb$.

\end{corollary}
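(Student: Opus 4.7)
The plan is to derive all three conclusions as deterministic consequences of the operator‑level convergence delivered by Theorem \ref{thm:CJ_op_limit}, via the comparison inequalities of Proposition \ref{prop:comp} and the spectral/secular identification of Proposition \ref{prop:unitary_repr}. First I work inside the coupling of Theorem \ref{thm:CJ_op_limit}, under which $\|\res \CJop - \res \Huaop\|_{HS} \to 0$ and $\intr_{\CJop} - \intr_{\Huaop} \to 0$ almost surely. Proposition \ref{prop:unitary_repr} identifies $\spec \CJop = n\Lambda_n + 2\pi n \ZZ$ (labeled so that $\lambda_{-1,n} < 0 < \lambda_{0,n}$) and $\zeta_{\CJop}(z) = p_n(e^{iz/n}) e^{-iz/2}$, while Proposition \ref{prop:hpoperator} gives the analogous identification for $\Huaop$ with eigenvalues $\{\lambda_{k,\ttHP}\}$ and secular function $\zeta^{\ttHP}_{\beta,\delta}$. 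The two labelings use the same convention, so the Hoffman--Wielandt bound \eqref{eq:ev_comp} applied to $\tau_1 = \CJop$, $\tau_2 = \Huaop$ yields
\[
\sum_k \bigl|\lambda_{k,n}^{-1} - \lambda_{k,\ttHP}^{-1}\bigr|^2 \leq \|\res \CJop - \res \Huaop\|_{HS}^2 \longrightarrow 0 \quad \text{a.s.},
\]
which is the first assertion.

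Next I would handle the secular function convergence by applying the bound \eqref{eq:zeta_comp} of Proposition \ref{prop:comp}. On any fixed compact $K \subset \CC$, the prefactor $e^{|z| |\intr_{\CJop} - \intr_{\Huaop}|} - 1 + |z| \|\res \CJop - \res \Huaop\|_{HS}$ tends to $0$ uniformly in $z \in K$ by Theorem \ref{thm:CJ_op_limit}. For the exponentiated second factor, reverse triangle inequalities give $\|\res \CJop\|_{HS} \to \|\res \Huaop\|_{HS} < \infty$ and $|\intr_{\CJop}| \to |\intr_{\Huaop}| < \infty$ almost surely, so that exponent remains bounded in $n$ uniformly on $K$. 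Combining these observations with \eqref{eq:zeta_comp} yields
\[
\bigl|p_n(e^{iz/n})e^{-iz/2} - \zeta^{\ttHP}_{\beta,\delta}(z)\bigr| = |\zeta_{\CJop}(z) - \zeta_{\Huaop}(z)| \longrightarrow 0
\]
uniformly on compacts, almost surely, as required.

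Finally, the weak convergence $n\Lambda_n \Rightarrow \HPb$ is immediate: the $\ell^2$ convergence of reciprocals in the first assertion forces pointwise convergence $\lambda_{k,n} \to \lambda_{k,\ttHP}$ for each $k \in \ZZ$ (all limits being finite and nonzero), and since every nonzero point of $2\pi n \ZZ$ leaves any fixed bounded window as $n \to \infty$, the point process $n\Lambda_n$ converges to $\HPb$ almost surely in the vague topology. Under the coupling $\Lambda_n$ has the CJ$\beta$E distribution (by Theorem \ref{thm:CJVer} together with the construction of $\CJop$), so the almost sure vague convergence upgrades to convergence in distribution. There is no substantive obstacle here — the corollary is essentially a bookkeeping application of Proposition \ref{prop:comp}; the only care required is to check that the $\lambda_{-1} < 0 < \lambda_0$ labeling convention is consistent across the two operators and that the a.s.\ boundedness of $\|\res \CJop\|_{HS}$ and $|\intr_{\CJop}|$ indeed follows from the a.s.\ convergence supplied by Theorem \ref{thm:CJ_op_limit}.
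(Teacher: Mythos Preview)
Your proposal is correct and follows precisely the route the paper indicates: the paper does not give a separate proof, stating only that the corollary follows ``immediately'' from Theorem \ref{thm:CJ_op_limit} and Proposition \ref{prop:comp}, and you have supplied exactly those details (the Hoffman--Wielandt bound \eqref{eq:ev_comp} for the eigenvalue convergence, the bound \eqref{eq:zeta_comp} for the secular function convergence, and the passage from pointwise eigenvalue convergence to vague point process convergence).
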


The following theorem and its corollary state the corresponding result for the real orthogonal ensemble.

\begin{theorem}\label{thm:RO_op_limit}
Fix $\beta>0$ and $a, b>-1$. Then there is a coupling of the random operators $\ROop, n\ge 1$ and $\Bessop$ so that $\|\res \ROop-\res \Bessop\|_{HS}$ converges to 0 almost surely as $n\to \infty$.  
\end{theorem}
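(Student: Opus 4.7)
The plan is to follow the blueprint of Theorem \ref{thm:CJ_op_limit} in a genuinely simpler setting. By Theorem \ref{thm:OrthVer} the Verblunsky coefficients $\alpha_k$ for RO$\beta$E are real, so $\gamma_k=\alpha_k\in\RR$, and consequently $v_k=0$ in \eqref{def:wv} and $x_k\equiv 0$ in \eqref{xyrec}. This has two consequences: the generating path of $\ROop$ is purely imaginary, matching the structure of $\Bessop=\Dirop(i\tl y,\uu_0,\uu_1)$; and the right boundary condition $\uu_1=[-x(1),-1]^t$ prescribed by Proposition \ref{prop:unitary_repr} reduces to $[0,-1]^t$, exactly matching that of $\Bessop$. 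The problem therefore reduces to controlling the single real process $\log y_k=\sum_{j=0}^{k-1}\log\frac{1+\alpha_j}{1-\alpha_j}$ together with the associated weight function, and the strategy splits into a path-convergence step on compact subintervals of $[0,1)$ followed by an operator-convergence step using the general Hilbert--Schmidt comparison machinery of Sections \ref{s:tools}--\ref{s:oplim}.

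For the path convergence, I would parameterize $k=2nt$ and use Theorem \ref{thm:OrthVer} to compute to leading order the mean and variance of the increments of $\log y_k$. Pairing consecutive even and odd indices eliminates the alternating first-order biases $(b-a)/(2n-k+a+b)$ and $-(a+b+1)/(2n-k+a+b)$ and leaves a combined drift per pair of $-(2a+1)/(n-m)$ and variance $4/(\beta(n-m))$. Summing, $\ev[\log y_{\lfloor 2nt\rfloor}]\to (2a+1)\log(1-t)=-\tfrac{\beta}{4}(2a+1)\upsilon_\beta(t)$ and $\Var(\log y_{\lfloor 2nt\rfloor})\to -\tfrac{8}{\beta}\log(1-t)=2\upsilon_\beta(t)$, which exactly match the mean and variance of $\log\tl y(t)=-\tfrac{\beta}{4}(2a+1)\upsilon_\beta(t)-B(2\upsilon_\beta(t))$. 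Coupling each $\alpha_j$ to a Gaussian increment with matched mean and variance (by quantile transport or a KMT-type construction, exactly as in the CJ case) and applying standard martingale maximal inequalities then yields uniform a.s.~convergence $\log y_{\lfloor 2nt\rfloor}\to \log\tl y(t)$ on every $[0,T]$ with $T<1$.

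The main obstacle is the analysis near the right endpoint $t=1$, where the Beta shape parameters in Theorem \ref{thm:OrthVer} degenerate (one of them vanishes at $k=2n-1$) and the boundary condition makes $\uu_1^t R\uu_1 = 1/y$ potentially large. To verify Assumption \ref{assumption:1} uniformly in $n$ and control the $L^2$ tails of $a(s)=X(s)\uu_0=[y^{-1/2},0]^t$ and $c(s)=X(s)\uu_1=[0,-\sqrt{y}]^t$ that enter the HS norm via \eqref{Dir:inverse}, I would establish pathwise two-sided bounds of the form $c_\eps(1-k/(2n))^{2a+1+\eps}\le y_k\le C_\eps(1-k/(2n))^{2a+1-\eps}$ with random constants whose tails are uniform in $n$. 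This should follow from a stopping-time/dyadic-scale argument running the martingale decomposition of the previous step at successively finer scales of $1-k/(2n)$, mirroring the estimate \eqref{ineq:hpsde} adapted to the Bessel case; the condition $a>-1$ is precisely what makes the exponent $2a+1$ compatible with $L^2$ integrability near $t=1$. With these two ingredients in place, the Hilbert--Schmidt convergence $\|\res\ROop-\res\Bessop\|_{HS}\to 0$ follows from the same general operator-comparison tool used for $\CJop\to\Huaop$, applied in the simpler $x\equiv 0$ regime.
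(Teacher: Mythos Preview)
Your proposal is correct and follows the same three-step structure as the paper: (i) path convergence of $y^{(2n)}$ to $\tl y$ on compacts of $[0,1)$, handled by pairing consecutive even/odd steps exactly as you describe (this is the paper's Proposition~\ref{prop:orthpath}); (ii) uniform-in-$n$ two-sided bounds $y_k\asymp (1-k/(2n))^{2a+1\pm\eps}$ with tight random constants (Proposition~\ref{prop:y_orth}); and (iii) the general Hilbert--Schmidt comparison (Corollary~\ref{cor:randomDirac}) in the $x\equiv 0$, $q=0$ regime, where \eqref{eq:xyfluc_bound_x} is vacuous. The only differences are in the choice of standard tools: for (i) the paper invokes Ethier--Kurtz weak convergence plus Skorokhod representation rather than an explicit KMT-type coupling, and for (ii) it uses the MGF-based Proposition~\ref{prop:ygeneral} (from Ram\'irez--Rider) rather than a dyadic stopping-time argument; both substitutions are cosmetic and your versions would work equally well.
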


Note that since the driving paths are purely imaginary, we have $\intr_{\ROop}=\intr_{\Bessop}=0$.

\begin{corollary}\label{cor:RO_lim}
Consider the coupling of Theorem \ref{thm:RO_op_limit}. Denote by $\Lambda_{2n}$ the eigenangles of $\ROop$ inside $(-\pi,\pi]$, and let $\lambda_{k,2n}, k\in \Z$ be the ordered elements of the set $2n \Lambda_{2n}+4\pi n \Z$ with $\lambda_{-1,2n}<0< \lambda_{0,2n}$.
Let $p_{2n}(z)$ be the normalized characteristic polynomial of $\Lambda_{2n}$ defined via \eqref{eq:charpol}. Denote by $\BB_{\beta,a}=\{\lambda_{k,\ttB}, k\in \Z\}$  the ordered spectrum of the operator $\Bessop$, and by $\zeta^{\ttB}_{\beta,a}$ the secular function of $\Bessop$. Then 
\begin{align}
    \sum_k |\lambda_{k,2n}^{-1}-\lambda_{k,\ttB}^{-1}|^2\to 0 \qquad &\text{almost surely as $n\to \infty$},\\
|p_{2n}(e^{iz/(2n)})e^{-iz/2}-\zeta^{\ttB}_{\beta,a}(z)|\to 0 \qquad &\text{almost surely, uniformly on compacts as $n\to \infty$}. 
\end{align}
Moreover, if $\Lambda_{2n}\sim \textup{RO}_{2n,\beta,a,b}$ then $2n\Lambda_{2n} \Rightarrow \BB_{\beta,a}$.
\end{corollary}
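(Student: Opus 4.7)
The plan is to derive Corollary \ref{cor:RO_lim} as an essentially immediate consequence of Theorem \ref{thm:RO_op_limit} combined with the deterministic comparison inequalities of Proposition \ref{prop:comp} and the spectrum/secular-function dictionary provided by Proposition \ref{prop:unitary_repr}.

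First, I would apply Proposition \ref{prop:unitary_repr} to the random probability measure $\mu_{2n,\beta,a,b}^{\textup{o}}$ underlying $\ROop$. Its support is contained in $\{\pm e^{i\theta_j}:\theta_j\in(0,\pi)\}$, so in particular $\mu(\{1\})=0$ and the proposition applies. This identifies the ordered spectrum of $\ROop$ with $2n\Lambda_{2n}+4\pi n\Z=\{\lambda_{k,2n}:k\in\Z\}$ and its secular function with $\zeta_{\ROop}(z)=p_{2n}(e^{iz/(2n)})e^{-iz/2}$. The corresponding objects for $\Bessop$ are $\BB_{\beta,a}=\{\lambda_{k,\ttB}:k\in\Z\}$ and $\zeta^{\ttB}_{\beta,a}$ by definition.

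Second, I would feed the pair $(\tau_1,\tau_2)=(\ROop,\Bessop)$ into Proposition \ref{prop:comp}. The Hoffman--Wielandt bound \eqref{eq:ev_comp} combined with Theorem \ref{thm:RO_op_limit} gives
\[
\sum_k \bigl|\lambda_{k,2n}^{-1}-\lambda_{k,\ttB}^{-1}\bigr|^2 \le \bigl\|\res\ROop-\res\Bessop\bigr\|_{HS}^2 \longrightarrow 0 \text{ a.s.}
\]
For the secular function comparison \eqref{eq:zeta_comp} the key observation is that the integral traces vanish identically: both generating paths are purely imaginary, so $x\equiv 0$ in \eqref{eq:R}, and with boundary vectors $\uu_0=[1,0]^t$, $\uu_1=[0,-1]^t$ a direct substitution gives $\uu_0^tR\uu_1\equiv 0$, hence $\intr_{\ROop}=\intr_{\Bessop}=0$. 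The a.s.\ Hilbert--Schmidt convergence also implies that $\|\res\ROop\|_{HS}$ is a.s.\ bounded in $n$, while $\|\res\Bessop\|_{HS}<\infty$ a.s.\ follows from Proposition \ref{prop:hardedge}. Plugging these inputs into \eqref{eq:zeta_comp} produces the asserted a.s.\ uniform-on-compacts convergence $\zeta_{\ROop}(z)\to\zeta^{\ttB}_{\beta,a}(z)$.

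Third, the distributional statement $2n\Lambda_{2n}\Rightarrow\BB_{\beta,a}$ is immediate from the coupling: the summable $\ell^2$ convergence of reciprocals, together with the normalization $\lambda_{-1,\cdot}<0<\lambda_{0,\cdot}$ that aligns indices, forces $\lambda_{k,2n}\to\lambda_{k,\ttB}$ for every $k$; since $\BB_{\beta,a}$ is locally finite, this implies vague (hence distributional) convergence of counting measures on bounded windows in $\R$. The entire argument is bookkeeping of established ingredients, so the genuine analytic obstacle sits in Theorem \ref{thm:RO_op_limit} itself; inside the corollary the only non-mechanical checks are the vanishing of the integral traces and the verification of the hypothesis $\mu(\{1\})=0$ needed to apply Proposition \ref{prop:unitary_repr}.
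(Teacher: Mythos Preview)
Your proposal is correct and matches the paper's own approach: the paper simply declares both corollaries to follow ``immediately'' from the operator-limit theorem together with Proposition~\ref{prop:comp}, and explicitly notes (between Theorem~\ref{thm:RO_op_limit} and Corollary~\ref{cor:RO_lim}) that $\intr_{\ROop}=\intr_{\Bessop}=0$ because the driving paths are purely imaginary. Your write-up just unpacks those steps; the only additions are the explicit checks that $\mu(\{1\})=0$ and that $\uu_1=[0,-1]^t$ in both operators, which are routine.
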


\subsection{Characterization of the limiting point processes}
\label{subs:limitproc}

The point process $\HPb$ is a generalization of the $\Sineb$ process: $\operatorname{HP}_{\beta, 0}=\Sineb$. The $\Sineb$ process has various descriptions via its counting function using  stochastic differential equations, we will show that these descriptions can be extended to the process $\HPb$ as well.

\begin{theorem}\label{thm:KSsde}
 Let $\beta>0$, $\delta\in \CC$ with $\Re \delta>-1/2$. Let $Z=B_1+i B_2$ be a standard complex Brownian motion, and let $\theta\in (-\pi,\pi]$ be a random variable independent of $Z$ so that $e^{i \theta}$ has distribution $\Theta(1,\delta)$.
 
There is a unique process $\psi_\lambda(t)$ with $t\in (0,1]$, $\lambda\in \R$ that is continuous in both variables, and for each $\lambda\in \R$ the process $t\to \psi_\lambda(t)$ is a strong solution of 
 \begin{align}\label{eq:HP_psi_sde}
     d\psi_\lambda=\lambda dt + \Re[(e^{-i\psi_\lambda}-1)(\tfrac{2}{\sqrt{\beta t}}dZ-i\delta\tfrac{4}{\beta t}dt)], \qquad \lim_{t\to 0} \psi_\lambda(t)=0.
 \end{align}
 The point process $\HPb$ has the same distribution as the random set
 \begin{align}
     \Xi=\{\lambda\in \R: \psi_\lambda(1)\in \theta+2\pi \ZZ\}.
 \end{align}
\end{theorem}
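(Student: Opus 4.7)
The plan is to represent the spectrum of $\Huaop$ through a Prüfer-type phase function associated with solutions of the eigenvalue equation $\Huaop u=\lambda u$, and to derive the SDE of the theorem by applying It\^o's formula to the phase after a reparametrization that time-reverses the hyperbolic diffusion driving $\Huaop$. This reversal is the key step that decouples the driving noise $Z$ from the terminal angle $\theta$, yielding the form stated in the theorem.

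More concretely, I would first recall the standard Pr\"ufer setup for a Dirac operator $\Dirop(x+iy,\uu_0,\uu_1)$: the solutions of the ODE $\tau u=\lambda u$ starting from $u(0)=\uu_0$ trace a continuous path on the boundary of the hyperbolic plane, whose lift to $\R$ is a continuous phase $\vfi_\lambda$, and eigenvalues are precisely those $\lambda$ for which the terminal phase matches the angle associated to $\uu_1$ modulo $2\pi$. Applied directly to $\Huaop$, the induced SDE has the general shape $d\vfi_\lambda=\lambda \tilde y^2\,dt+\Re[(e^{-i\vfi_\lambda}-1)(\cdots dB+\cdots dt)]$, with driving Brownian motions $(B_1,B_2)$ from \eqref{eq:HPsde} and coefficients depending on the generating path $\tilde x+i\tilde y$; the terminal condition involves $\uu_1=[-q,-1]^t$, so that the eigenvalue condition couples $\vfi_\lambda(1)$ with the random variable $q$, which is itself a functional of $(B_1,B_2)$.

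To arrive at the clean formulation of the theorem, I would time-reverse the generating path: running the HP diffusion from $q$ at $s=\infty$ back to $i$ at $s=0$, the reversed diffusion is driven by a fresh complex Brownian motion $Z$ independent of $q$, with drift given by an $h$-transform involving the Martin kernel that produced the Pearson IV law in Theorem \ref{thm:q}. Composing the reversal with the time change $t=e^{-\beta s/4}$ (which sends $s=\infty$ to $t=0$ and places the singular coefficients there) converts the phase SDE into \eqref{eq:HP_psi_sde}: the noise prefactor $\tfrac{2}{\sqrt{\beta t}}$ comes from the time change applied to $Z$, while the drift $-i\delta\tfrac{4}{\beta t}\,dt$ comes from the transformed $\delta$-drift of \eqref{eq:HPsde} under the reversal. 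The normalization $\lim_{t\to 0}\psi_\lambda(t)=0$ corresponds to gauging the Pr\"ufer phase to $0$ at the direction $\uu_1$ (now at $t=0$), and the terminal condition $\psi_\lambda(1)\in\theta+2\pi\Z$ encodes matching the direction $\uu_0=[1,0]^t$ at the reversed endpoint. The distribution $e^{i\theta}\sim\Theta(1,\delta)$ follows from Theorem \ref{thm:q} together with the identification of $\Theta(1,\delta)$ with $P_{IV}(\Re\delta+1,-2\Im\delta)$ via the conformal map $w\mapsto i(w+1)/(1-w)$ discussed immediately after Theorem \ref{thm:q}, and the independence of $\theta$ and $Z$ is exactly the decoupling produced by the reversal.

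For well-posedness of \eqref{eq:HP_psi_sde}: although the coefficients are singular at $t=0$, the factor $(e^{-i\psi_\lambda}-1)$ vanishes at $\psi_\lambda=0$, which gives the control needed for strong existence and pathwise uniqueness via localization and standard $L^2$-estimates, as in the Killip-Stoiciu analysis for $\Sineb$. Joint continuity in $(t,\lambda)$ follows from a Kolmogorov criterion applied to $\lambda$-differences, using linearity of the $\lambda\,dt$ term together with the Lipschitz structure of the remaining coefficients in $\psi_\lambda$. The main obstacle will be the time-reversal step: carrying out the reversal of the HP diffusion carefully and identifying the $h$-transform drift to produce precisely the coefficients of \eqref{eq:HP_psi_sde}, and then tracking the phase SDE through the reversal. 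A secondary difficulty is matching the phase-counting set $\Xi$ with the spectrum $\HPb$ on the nose (handling the winding of $\psi_\lambda$ on $(0,1]$ and multiplicities), which I would address by showing that the $\lambda\mapsto\psi_\lambda(1)$ map is strictly monotone and has the correct asymptotics, so that every lift in $\theta+2\pi\Z$ is hit exactly once.
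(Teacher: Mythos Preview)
Your overall strategy is correct and matches the paper's: both use a Pr\"ufer-type phase for the Dirac eigenvalue equation, decouple the terminal direction from the driving noise, and apply the time change $u=\tfrac{4}{\beta}\log t$ to reach \eqref{eq:HP_psi_sde}. The paper packages the phase slightly differently, extracting it from the structure function $\cE_u(\lambda)=\cA_u(\lambda)-i\cB_u(\lambda)$ built in the proof of Theorem~\ref{thm:zetaHP} via $\alpha_\lambda=\Im(2\log\cE)$; since $\cH=[\cA,\cB]^t$ already satisfies the SDE \eqref{eq:HP_cH}, the It\^o computation for $\alpha_\lambda$ is immediate and the passage to $\psi_\lambda$ is just the time change.

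There is, however, a gap in your decoupling step. A direct time-reversal of the HP diffusion combined with the $h$-transform conditioning on $w_\infty=q$ yields a drift that still depends on $q$ (the Martin-kernel term is $q$-dependent), so the resulting phase SDE will not have the $q$-free coefficients of \eqref{eq:HP_psi_sde}. The paper supplies the missing ingredient in Theorem~\ref{thm:factor}: the hyperbolic rotation $T_q$ about $i$ sending $q\mapsto\infty$ turns the $h$-conditioned diffusion into the single SDE \eqref{eq:SDE_cond}, whose law is \emph{independent} of $q$. This rotation, combined with the operator-level time reversal of Lemma~\ref{lem:timerev}, is assembled in Lemma~\ref{lem:HPtau} into an operator $\tau_{\beta,\delta}$ orthogonally equivalent to $\Huaop$ whose driving path is genuinely independent of the boundary point $q\sim P_{IV}(\Re\delta+1,-2\Im\delta)$. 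Once you insert this rotation your outline goes through, and the identification $\theta=-2\arccot q$ giving $e^{i\theta}\sim\Theta(1,\delta)$ is exactly as you describe. Note also that in the paper's reversed operator the phase is gauged to $0$ at the \emph{fixed} direction $\uu_0=[1,0]^t$ at $t=0$, with the eigenvalue condition at $t=1$ matching the \emph{random} direction determined by $q$; your assignment of the roles of $\uu_0,\uu_1$ is inverted.
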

Note that this is an extension of the Killip-Stoiciu characterization of the $\Sineb$ process, see \cite{KS}, \cite{BVBV_op}.
The following theorem provides another, equivalent characterization of $\HPb$, which is an extension of the description of $\Sineb$ given in Proposition 4 of \cite{BVBV}. 

\begin{theorem}\label{thm:HP_2piZ}
Let $\beta>0$, $\delta\in \CC$ with $\Re \delta>-1/2$. Let $Z=B_1+i B_2$ be a standard complex Brownian motion. Then the following SDE system has a unique strong solution on $t\in [0,\infty)$, $\lambda\in \R$  
\begin{align}\label{eq:HP_alpha_sde}
d\alpha_\lambda = \lambda \tfrac{\beta}{4}e^{-\frac{\beta}{4}t}dt +\Re[(e^{-i \alpha_\lambda}-1)(dZ-i\delta dt)],\quad\quad \alpha_{\lambda}(0)=0.
\end{align}
 With probability one the process $\lambda\to \alpha_\lambda(t)$ is increasing for all $t>0$. 
For each $\lambda\in \R$ the limit $\operatorname{sgn}(\lambda)\cdot\lim\limits_{t\to \infty}\frac{1}{2\pi} \alpha_\lambda(t)$ exists almost surely, and it has the same  distribution as the number of points of $\HPb$ in $[0,\lambda]$ for $\lambda\ge 0$ (and in $[\lambda,0]$ for $\lambda<0$). Moreover, if $N(\lambda)$ is the right-continuous version of the function $\lambda\to \lim\limits_{t\to \infty}\frac{1}{2\pi} \alpha_\lambda(t)$, then $N(\cdot)$ has the same distribution as the counting function of the $\HPb$ process. 

\end{theorem}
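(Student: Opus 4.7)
I would realize $\alpha_\lambda(t)$ as the Pr\"ufer phase of the Dirac eigenvalue problem $\Huaop f = \lambda f$, parametrized in the original SDE time $t\in[0,\infty)$ of the diffusion $(x,y)$ from \eqref{eq:HPsde} rather than in the compactified operator time $s = 1 - e^{-\beta t/4}\in[0,1)$. This would extend the $\Sineb$ characterization of Proposition 4 of \cite{BVBV} to the Hua--Pickrell setting; the drift term $-i\delta\,dt$ in \eqref{eq:HP_alpha_sde} is the new contribution coming from the $\Re\delta$ and $\Im\delta$ drifts of \eqref{eq:HPsde}, while the $e^{-\beta t/4}$ factor in the $\lambda$-drift comes from the Jacobian of the time change.

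Existence, uniqueness, and joint continuity of $(\lambda,t)\mapsto\alpha_\lambda(t)$ follow from Picard iteration together with a Kolmogorov continuity argument, since the SDE coefficients are smooth and Lipschitz in $\alpha$ with exponentially decaying $\lambda$-drift. Monotonicity in $\lambda$ follows from a comparison argument: the difference $D = \alpha_{\lambda_1}-\alpha_{\lambda_2}$ solves an SDE whose drift has a strictly positive deterministic part $(\lambda_1-\lambda_2)\frac{\beta}{4}e^{-\beta t/4}$, while all remaining terms are Lipschitz in $D$ and vanish at $D=0$, forcing $D>0$ for $t>0$ when $\lambda_1>\lambda_2$ via a Yamada--Watanabe style comparison.

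To derive \eqref{eq:HP_alpha_sde} I would use the substitution $g = Xf$. Since $\det X = 1$, the identity $XJX^t = J$ holds, and the eigenvalue equation reduces to $g' = (X'X^{-1} - \frac{\lambda}{2}J)g$ in $s$-time, where $X'X^{-1}$ is an $\mathfrak{sl}_2(\R)$-valued stochastic one-form determined by \eqref{eq:HPsde}. Passing to $t$-time via $s = 1-e^{-\beta t/4}$ and applying It\^o's formula to the angular coordinate $\alpha = 2\arg(g_1+ig_2)$ of $g$ produces \eqref{eq:HP_alpha_sde}: the drift $\lambda\frac{\beta}{4}e^{-\beta t/4}\,dt$ comes from the $-\frac{\lambda}{2}J$ term combined with $ds/dt$, while the martingale $\Re[(e^{-i\alpha}-1)dZ]$ and the drift $\Re[(e^{-i\alpha}-1)(-i\delta\,dt)]$ come respectively from the martingale and drift parts of $dx, dy$ in \eqref{eq:HPsde}, up to an orthogonal relabeling of the driving Brownian motions.

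The counting function identification uses Sturm--Pr\"ufer oscillation for the Dirac operator: $\lambda\in\spec\Huaop$ iff $\alpha_\lambda(\infty)$ lies in a $2\pi\Z$-shift of the boundary angle determined by $\uu_1=[-q,-1]^t$, and the monotonicity in $\lambda$ ensures that the right-continuous version of $\operatorname{sgn}(\lambda)\lim_{t\to\infty}\alpha_\lambda(t)/(2\pi)$ counts these eigenvalues. The main obstacle is showing that this $t\to\infty$ limit exists almost surely and not just in distribution: the drift decays like $e^{-\beta t/4}$, but the noise $\Re[(e^{-i\alpha}-1)dZ]$ has quadratic variation $4\sin^2(\alpha/2)\,dt$ which does not vanish. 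One needs a martingale/oscillation analysis in the spirit of \cite{KS,BVBV}, combined with the boundary estimates \eqref{ineq:hpsde} for $\tl x, \tl y$ and the Pearson IV law of $q$ from Theorem \ref{thm:q}, to show that $\alpha_\lambda(t)$ stabilizes almost surely and to match the distribution of its limit with the counting function of $\HPb$.
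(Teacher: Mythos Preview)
Your outline matches the paper's approach at the structural level: realize $\alpha_\lambda$ as the Pr\"ufer phase of $\Huaop v=\lambda v$ after the time change $s=1-e^{-\beta t/4}$, deduce \eqref{eq:HP_alpha_sde} by It\^o, and appeal to oscillation theory for the counting identification. The paper parametrizes the phase via the ratio $r_\lambda=v_1/v_2$ and the hyperbolic angle $\tl\alpha_\lambda=2\arccot((\tl x-r_\lambda)/\tl y)$ rather than via $g=Xf$ and $\alpha=2\arg(g_1+ig_2)$, but these are equivalent descriptions of the same rotation.

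The genuine gap is exactly where you flag it: the almost sure existence and integrality of $\lim_{t\to\infty}\tfrac{1}{2\pi}\alpha_\lambda(t)$. Your invocation of ``martingale/oscillation analysis'' together with the estimates \eqref{ineq:hpsde} and the Pearson IV law of $q$ is not a proof, and in fact neither $\tl x,\tl y$ nor $q$ enter the argument: the SDE \eqref{eq:HP_alpha_sde} is autonomous in $\alpha_\lambda$ and the complex Brownian motion $Z$, so the limit must be analyzed from \eqref{eq:HP_alpha_sde} alone. The paper's mechanism (isolated as Lemma~\ref{lem:HP_alpha_sde}) is the substitution $X(t)=\pm\log|\tan(\alpha_\lambda(t)/4)|$ on each $2\pi$-interval, which converts the problem into showing $X(t)\to\pm\infty$. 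Under this map the noise becomes $dW$ with unit quadratic variation, and the drift is $\tfrac{\lambda\beta}{8}e^{-\beta t/4}\cosh X+(\Re\delta+\tfrac12)\tanh X-\Im\delta\,\sech X$; for large $|X|$ the dominant term is $(\Re\delta+\tfrac12)\tanh X$, so one can couple $X$ with Brownian motion with drift $\pm c$ for any $0<c<\Re\delta+\tfrac12$ and conclude via the exponential tail of the running minimum of drifted Brownian motion. This is the missing idea; without it the noise term, whose quadratic variation $4\sin^2(\alpha_\lambda/2)\,dt$ does not decay, prevents any direct martingale convergence. A secondary point: the eigenvalue condition here is not ``$\alpha_\lambda(\infty)$ lies in a $2\pi\Z$-shift of the boundary angle determined by $\uu_1$'' --- that is the setup of Theorem~\ref{thm:KSsde}. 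In the present theorem the limit is always an integer multiple of $2\pi$, and the matching with $\spec\Huaop$ requires the limit point/limit circle dichotomy for the right endpoint (Proposition~31 of \cite{BVBV_op}) together with the ``convergence from above'' in the limit point case, both of which feed into the oscillation argument of Theorem~26 in \cite{BVBV_op}.
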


The diffusion description given in Theorem \ref{thm:HP_2piZ}  allows us to study various properties of  the counting function of the  $\HPb$ process via the SDE \eqref{eq:HP_alpha_sde}. For a given $\lambda\in \R$ the process $\alpha_\lambda$ given by \eqref{eq:HP_alpha_sde} has the same distribution as the unique strong solution of 
	\begin{align}
	d\alpha_{\lambda} = \lambda \tfrac{\beta}{4}e^{-\frac{\beta}{4}t}dt + (\Im\delta(\cos\alpha_{\lambda}-1)-\Re\delta\sin\alpha_{\lambda})dt + 2\sin(\tfrac{\alpha_{\lambda}}{2})dW, \quad \alpha_{\lambda}(0)=0.
	\end{align}
Here $W$ is a standard Brownian motion (which also depends on $\lambda$).

A similar diffusion description for the square root of the hard edge process (spectrum of the operator $\mathfrak{G}_{\beta,a}$ given in \eqref{eq:hardop}) was proved by Holcomb in  \cite{H2018}, building on the results of \cite{RR}. 
Let $M_{a,\beta}(\lambda)$ be the counting function of the $\BB_{\beta,a}$ process and $B$ a standard Brownian motion. 
Then, by Theorem 1.4 of \cite{H2018}, the function $\lambda \to M_{a,\beta}(\lambda)$ has the same distribution as the right continuous version of the function  $\lambda \to \lim\limits_{t\to\infty}\lfloor\frac{1}{4\pi}\psi_{a,\lambda}(t)\rfloor$, where $\psi_{a,\lambda}$ solves the SDE
\begin{align}\label{eq:hardedge_sde}
d \psi_{a,\lambda} = \tfrac{\beta}{2}(a+\tfrac{1}{2})\sin(\tfrac{\psi_{a,\lambda}}{2})dt + \lambda \tfrac{\beta}{4}e^{-\beta t/8} dt+ \tfrac{\psi_{a,\lambda}}{2} dt+ 2\sin(\tfrac{\psi_{a,\lambda}}{2})dB,\quad \psi_{a,\lambda}(0)=2\pi.
\end{align}

As an application of Theorem \ref{thm:HP_2piZ}, one can study 
the asymptotics of large gap probabilities of the $\HPb$ process. For $\beta>0, \Re \delta>-1/2$  let
\[
GAP_\lambda=P(\HPb\cap [0,\lambda]=\emptyset), \qquad \lambda>0,
\]
be the probability of $\HPb$ having no points in the interval  $[0,\lambda]$. Then $GAP_\lambda$ is
the probability that $\alpha_\lambda(t)\to 0$ as $t\to \infty$. The  asymptotics of $GAP_\lambda$ as $\lambda\to \infty$ can be studied with a change of measure argument, by comparing $\alpha_\lambda$ to a similar diffusion which converges to 0 a.s. This approach was carried out  in \cite{BVBV2} for the $\Sineb$ process.  The proof in \cite{BVBV2} can be extended  to cover the $\HPb$ process with a bit of extra work, we state the result without proof.

\begin{theorem}\label{thm:largegap} Fix $\beta>0$ and $\delta\in \CC$ with $\Re \delta>-1/2$. Then as $\lambda\to\infty$ we have
	\[
	GAP_\lambda = (\kappa_{\beta,\delta}+o(1))\lambda^{\gamma_{\beta,\delta}}\exp\Big(-\tfrac{\beta}{64}\lambda^2+\big(\tfrac{\beta}{8}-\tfrac{1}{4}+\tfrac{1}{2}\Im\delta\big)\lambda\Big),
	\]
	where 
	\[
	\gamma_{\beta,\delta} = \tfrac{1}{4}\big(\tfrac{\beta}{2}-\tfrac{2}{\beta}-3\big)-\Re\delta+\tfrac{2}{\beta}
	\Re(\delta+\delta^2).
	\]
\end{theorem}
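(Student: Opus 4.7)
The starting point is the diffusion characterization from Theorem \ref{thm:HP_2piZ}: $GAP_\lambda$ equals the probability that the scalar SDE
\[
d\alpha_\lambda = \lambda\tfrac{\beta}{4}e^{-\beta t/4}\,dt + \bigl(\Im\delta(\cos\alpha_\lambda-1)-\Re\delta\sin\alpha_\lambda\bigr)dt + 2\sin(\alpha_\lambda/2)\,dW,\qquad \alpha_\lambda(0)=0,
\]
satisfies $\alpha_\lambda(t)\to 0$ as $t\to\infty$. Following the scheme developed in \cite{BVBV2} for the $\Sineb$ case (which is the $\delta=0$ specialization), the plan is to realize this probability as the expectation of a Girsanov density against a reference measure $\mathbb{Q}$ under which the corresponding diffusion converges to $0$ almost surely. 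The natural choice of $\mathbb{Q}$ is the one that flips the sign of the forcing drift $\lambda(\beta/4)e^{-\beta t/4}$, turning the equation into a generalized $\Sineb$-type SDE in which the forcing pushes toward $0$; call its solution $\widetilde{\alpha}_\lambda$. Since the event $\{\widetilde{\alpha}_\lambda(\infty)=0\}$ has $\mathbb{Q}$-probability tending to $1$, the leading asymptotics of $GAP_\lambda$ reduce to asymptotically evaluating $\mathbb{E}_{\mathbb{Q}}[dP/d\mathbb{Q}]$ restricted to that event.

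I would then split the time axis at $T_\star=\tfrac{4}{\beta}\log\lambda^{2}$, the scale at which the deterministic forcing $\lambda e^{-\beta t/4}$ drops to $O(1)$. On $[0,T_\star]$ the process is well approximated by a deterministic oscillatory trajectory of amplitude $O(\lambda)$; substituting this trajectory into the Girsanov exponent produces the main Gaussian contribution $-\tfrac{\beta}{64}\lambda^{2}+\bigl(\tfrac{\beta}{8}-\tfrac14+\tfrac12\Im\delta\bigr)\lambda$, where the $\Im\delta$ piece emerges from averaging $\Im\delta(\cos\widetilde{\alpha}_\lambda-1)$ along the oscillation. On $[T_\star,\infty)$ the process has linearized around $0$ and behaves like an Ornstein--Uhlenbeck-type diffusion whose effective restoring coefficient is perturbed by $\Re\delta$; computing the expected value of the remaining Girsanov weight against the stationary density of this linearization produces both the polynomial factor $\lambda^{\gamma_{\beta,\delta}}$ and the constant $\kappa_{\beta,\delta}$. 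The match across $t=T_\star$ should be handled by a standard patching argument as in \cite{BVBV2}.

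The main obstacle is the bookkeeping of the $\delta$-dependent corrections. In \cite{BVBV2} the $\delta=0$ equation is invariant under $\alpha\mapsto -\alpha$, which symmetrizes many of the oscillatory integrals and simplifies the analysis; here the drift $\Im\delta(\cos\alpha-1)-\Re\delta\sin\alpha$ breaks that symmetry and has to be carried explicitly through both regimes. Concretely, the $\tfrac12\Im\delta$ contribution to the linear coefficient reduces to a bounded oscillatory integral along the deterministic trajectory on $[0,T_\star]$, while the extra $-\Re\delta+\tfrac{2}{\beta}\Re(\delta+\delta^{2})$ piece of $\gamma_{\beta,\delta}$ arises from the It\^o quadratic-variation correction to the Girsanov density at the matching scale and from the modified mass of the linearized process on $[T_\star,\infty)$. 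One also has to establish the uniform integrability needed to upgrade the resulting asymptotics to the exact prefactor $\kappa_{\beta,\delta}$; this should follow from moment bounds on $\sup_{t}|\widetilde{\alpha}_\lambda(t)|$ obtained via Gronwall and Burkholder--Davis--Gundy estimates applied to the $\delta$-perturbed SDE, in parallel with the corresponding bounds in \cite{BVBV2}. Once these technical estimates are in place, the argument of \cite{BVBV2} can be followed essentially to the letter.
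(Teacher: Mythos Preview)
Your proposal is aligned with the paper's approach, but you should know that the paper does not actually prove Theorem~\ref{thm:largegap}: it is stated explicitly without proof, with only the remark that the change-of-measure argument of \cite{BVBV2} (comparing $\alpha_\lambda$ to a similar diffusion that converges to $0$ a.s.) can be extended to cover the $\HPb$ process ``with a bit of extra work.'' Your outline---Girsanov against a reference measure under which the forced diffusion hits $0$, the time split at the scale where $\lambda e^{-\beta t/4}$ becomes $O(1)$, and the separate analyses of the oscillatory and linearized regimes---is precisely an elaboration of that sketch, and is consistent with the structure of \cite{BVBV2}.
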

A similar type of result is proved in \cite{Ramirez2011} for the asymptotic gap probability of the hard edge process. Note also that for the square root of the hard edge process, various properties (for example a transition to $\Sineb$ process and a Central Limit Theorem) have been proved by Holcomb \cite{H2018} by analyzing the coupled system of SDEs \eqref{eq:hardedge_sde}. With small modifications of the proofs therein, we get similar results for the $\HPb$ process. Again we will only record the statements without proofs.

\begin{theorem}\label{thm:CLT_transition}
	Fix $\beta>0$ and $\delta\in \CC$ with $\Re \delta>-1/2$. Then as $\lambda\to \infty$, we have
	\begin{align*}
	(\HPb-\lambda)\Rightarrow \Sineb.
	\end{align*}
	Let $N(\cdot)$ be the counting function of the $\HPb$ process, as $\lambda\to\infty$ we have
	\begin{align*}
	\tfrac{1}{\sqrt{\log \lambda}}(N(\lambda) -\tfrac{\lambda}{2\pi})\Rightarrow \mathcal{N}(0,\tfrac{2}{\beta\pi^2}),
	\end{align*}
where $\mathcal{N}(\mu,\sigma^2)$ is the mean $\mu$, variance $\sigma^2$ normal distribution. 
\end{theorem}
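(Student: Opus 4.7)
The plan is to adapt Holcomb's SDE analysis of the hard edge diffusion \eqref{eq:hardedge_sde} in \cite{H2018} to the $\HPb$ SDE \eqref{eq:HP_alpha_sde} using Theorem \ref{thm:HP_2piZ}. The Hua--Pickrell drift for $\alpha_\lambda$ differs from the $\Sineb$ SDE (the $\delta = 0$ case) only by the additional drift $(\Im\delta(\cos\alpha_\lambda-1)-\Re\delta\sin\alpha_\lambda)\,dt$, which is bounded uniformly in $t$ and $\alpha$. This is structurally analogous to the bounded $\tfrac{\beta}{2}(a+\tfrac12)\sin(\psi_{a,\lambda}/2)$ drift that Holcomb treats as a perturbation of pure Sine dynamics, so the main steps of his argument carry over with only the invariant-measure computation of the perturbed phase needing adjustment.

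For the transition $\HPb - \lambda \Rightarrow \Sineb$, I would couple $\{\alpha_{\lambda+s}\}_{s\in\R}$ and $\{\alpha_\lambda\}$ via a common driving Brownian motion. Monotonicity of $\lambda \mapsto \alpha_\lambda(t)$ from Theorem \ref{thm:HP_2piZ} gives tightness of the resulting counting functions on compact intervals. To identify the limit, I would study the difference $\Delta_s(t) := \alpha_{\lambda+s}(t) - \alpha_\lambda(t)$, whose SDE has bounded Lipschitz coefficients. As $\lambda \to \infty$, the base process $\alpha_\lambda$ oscillates rapidly, so the coefficients of the $\Delta_s$-equation self-average and reduce in the limit to those of the autonomous Sine diffusion at parameter $s$ (the $\delta = 0$ case of \eqref{eq:HP_alpha_sde}). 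The bounded $\delta$-drift contributes only an $O(1)$ shift that cancels between parameters $\lambda+s$ and $\lambda$ in the limit, so $\HPb - \lambda$ converges to $\Sineb$ in the point-process sense.

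For the CLT, I would use the decomposition
\[
\alpha_\lambda(t) = \lambda(1-e^{-\beta t/4}) + D_\lambda(t) + M_\lambda(t), \qquad dM_\lambda = 2\sin(\tfrac{\alpha_\lambda}{2})\,dW,
\]
where $D_\lambda$ is the time integral of the bounded $\delta$-drift. Following \cite{H2018}, the process $\alpha_\lambda$ oscillates rapidly on the window $t \in (0, \tfrac{4}{\beta}\log\lambda)$ and becomes essentially frozen afterwards. Over this window $\sin^2(\alpha_\lambda/2)$ averages to $1/2$, yielding $\langle M_\lambda\rangle_\infty/\log\lambda \to 8/\beta$; meanwhile $\sin\alpha_\lambda$ and $\cos\alpha_\lambda-1$ average to $0$ and $-1$ respectively, so $D_\lambda(\infty)$ is bounded in probability. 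A martingale CLT then gives $M_\lambda(\infty)/\sqrt{\log\lambda} \Rightarrow \mathcal{N}(0,8/\beta)$, and dividing by $2\pi$ (since $N(\lambda) = \alpha_\lambda(\infty)/(2\pi)$ up to sign and rounding by Theorem \ref{thm:HP_2piZ}) produces the claimed variance $2/(\beta\pi^2)$.

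The main obstacle is the averaging step: one must promote the heuristic replacement of $\sin^2(\alpha_\lambda/2)$ by $1/2$ (and analogous replacements for the $\delta$-drift) to rigorous $L^1$ estimates that are uniform enough to control $\langle M_\lambda\rangle_\infty$ and $D_\lambda(\infty)$ at the $\log\lambda$ scale. Holcomb identifies an explicit invariant measure for a rescaled phase process; the same strategy applies here, but the invariant measure must be recomputed with the bounded $\delta$-perturbation. Checking that the $\sin^2(\cdot/2)$ integral against this modified invariant measure still equals $1/2$ — so that the CLT variance is independent of $\delta$ — is the crucial, nontrivial point where the "small modifications" of the paper's remark are concentrated.
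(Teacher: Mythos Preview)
The paper explicitly does not prove this theorem: it states the result and says only that ``with small modifications of the proofs therein [in \cite{H2018}], we get similar results for the $\HPb$ process.~Again we will only record the statements without proofs.'' Your proposal is precisely the adaptation the paper gestures at---treating the bounded $\delta$-drift $(\Im\delta(\cos\alpha_\lambda-1)-\Re\delta\sin\alpha_\lambda)\,dt$ as a perturbation analogous to Holcomb's $\tfrac{\beta}{2}(a+\tfrac12)\sin(\psi/2)$ term---so it aligns with the paper's intended (but unwritten) argument.
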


\subsection{Characterization of the limiting random analytic functions}

\begin{theorem}[Characterization of $\zeta^{\ttHP}_{\beta, \delta}$]\label{thm:zetaHP}
Fix $\beta>0$ and $\delta\in \CC$ with $\Re \delta>-1/2$.
Let $B_1, B_2$ independent copies of two-sided Brownian motion, and let $q$ be an independent random variable with distribution $P_{IV}(\Re \delta+1,-2 \Im \delta)$. Denote by $\HPb$ the spectrum of the operator $\Huaop$, and by $\zeta^{\ttHP}_{\beta, \delta}$ its secular function. Then $\zeta^{\ttHP}_{\beta, \delta}$ has the same distribution as the random analytic function $[1,-q]\cH_0$ where $\cH_u(z)$ is the unique analytic solution of the system of stochastic differential equations 
\begin{equation}\label{eq:HP_cH}
	d\mathcal{H} = \begin{pmatrix}
	0 & -dB_1\\0& dB_2
	\end{pmatrix}\mathcal{H}+ \begin{pmatrix}
	0 & -\Im\delta du\\0& -\Re\delta du
	\end{pmatrix}\mathcal{H} -z\frac{\beta}{8}e^{\beta u/4}J\mathcal{H} du, \qquad u\in \RR
\end{equation}
with the boundary condition $\lim\limits_{u\to -\infty} \sup_{|z|<1} \left|\cH_u(z)-\binom{1}{0}\right|=0$. Moreover, $\zeta^{\ttHP}_{\beta, \delta}(z)$ has the same distribution as the random power series $\sum_{n=0}^\infty (\cA^{(n)}_{0}-q \cB^{(n)}_{0})z^n$ where $\cA^{(n)}_{u}, \cB^{(n)}_{u}$ are processes satisfying the recursion 
\begin{align}\label{eq:HP_taylor}
    \cB^{(n)}_{u}&=-e^{B_2(u)-(\frac12+\Re\delta)u}\int_{-\infty}^u \tfrac{\beta}{8}\,e^{-B_2(s)+(\frac{\beta}{4}+\frac12+\Re\delta)s}\,\cA^{(n-1)}_{s}\,ds,\\
\cA^{(n)}_{u}&=\int_{-\infty}^u \big(\tfrac{\beta}{8}\, e^{\beta s/4}\,\cB^{(n-1)}_{s}-\Im\delta \cB^{(n)}_{s}\big)\,ds- \int_{-\infty}^u\cB^{(n)}_{s} \, dB_1.
\label{eq:HP_taylor_2}
\end{align}
with $\cA^{(0)}\equiv 1, \cB^{(0)} \equiv 0$. 
\end{theorem}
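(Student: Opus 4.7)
The plan is to identify $\zeta^{\ttHP}_{\beta,\delta}$ directly with an endpoint evaluation of a solution of the eigenvalue equation $\Huaop u = z u$, suitably transformed so that the time parameter runs over $\R$ with boundary data imposed at $-\infty$. The SDE \eqref{eq:HP_cH} then comes from applying It\^o's formula to this transformed solution, and the Taylor-series recursion \eqref{eq:HP_taylor}, \eqref{eq:HP_taylor_2} follows by expanding \eqref{eq:HP_cH} in powers of $z$ and matching coefficients.

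First I would use the transfer-matrix description of the secular function from \cite{BVBV_szeta} to represent $\zeta^{\ttHP}_{\beta,\delta}(z)$ as a boundary evaluation of the solution $\phi_z$ of the eigenvalue ODE $J\phi' = zR\phi$, normalized by the left boundary condition $\uu_0 = [1,0]^t$. The secular function is then (up to the exponential factor from $\mathfrak{t}_\tau$) given by $\uu_1^t J\phi_z(1^-) = -\phi_z^{(1)}(1^-)+q\phi_z^{(2)}(1^-)$, using $\uu_1 = [-q,-1]^t$. Next I would combine two transformations: the change of basis $\cH = X\phi_z$ based on the matrix $X$ from \eqref{eq:R}, and a time reparametrization such as $u = -\upsilon_\beta(1-t)$ that places the limit-circle endpoint $t=1$ of the operator at $u=-\infty$ and the left endpoint $t=0$ at $u=0$. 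The symplectic identity $XJX^t=J$ ensures that the $z$-dependent part of the transformed ODE reduces to the clean term $-z\tfrac{\beta}{8}e^{\beta u/4}J\cH$ of \eqref{eq:HP_cH}, while the It\^o contribution from $dX\cdot X^{-1}$ (computed along the SDE \eqref{eq:HPsde}) supplies the drift and martingale terms carrying the $\Re\delta$, $\Im\delta$, $dB_1$, $dB_2$ dependence; the upper triangular structure of $X$ is exactly what produces the vanishing first column of the noise and drift matrices in \eqref{eq:HP_cH}. The two-sided Brownian motion in the statement is the natural one induced by the above time change (and independent extension to $u>0$ if needed; this has no effect on $\cH_0$).

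The boundary condition $\lim_{u\to-\infty}\cH_u(z) = [1,0]^t$ uniformly on compact $z$-sets is verified using the path estimates \eqref{ineq:hpsde} for $(\tl x, \tl y)$ near the limit-circle endpoint, together with the fact that the $z$-dependent forcing in \eqref{eq:HP_cH} carries the factor $e^{\beta u/4}$, which is integrable at $-\infty$. A Duhamel expansion of $\cH_u(z)-\cH_u(0)$ in powers of $z$ then converges absolutely and uniformly in $z$ on compacts for $u$ sufficiently negative. The identification $\zeta^{\ttHP}_{\beta,\delta}(z) = [1,-q]\cH_0(z)$ records the value at the $u=0$ endpoint contracted against $\uu_1^t J$. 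For the Taylor-series characterization, write $\cH_u(z) = \sum_{n\geq 0} z^n\cC^{(n)}_u$ with $\cC^{(n)}=(\cA^{(n)},\cB^{(n)})^t$, substitute into \eqref{eq:HP_cH}, and match powers of $z$. For each $n\geq 1$ this gives a linear SDE system for $(\cA^{(n)},\cB^{(n)})$ with forcing from $(\cA^{(n-1)},\cB^{(n-1)})$ and boundary condition $\cA^{(n)}_{-\infty}=\cB^{(n)}_{-\infty}=0$. The $\cB^{(n)}$-equation is a linear SDE with multiplicative noise $dB_2-\Re\delta\,du$ and additive forcing proportional to $\cA^{(n-1)}$; its explicit solution via the stochastic integrating factor $e^{B_2(u)-(\frac12+\Re\delta)u}$ is exactly \eqref{eq:HP_taylor}. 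Integrating the $\cA^{(n)}$-equation yields \eqref{eq:HP_taylor_2}, and the base case $\cA^{(0)}\equiv 1, \cB^{(0)}\equiv 0$ comes from $\cB^{(0)}_{-\infty}=0$ satisfying its own homogeneous multiplicative SDE. To complete the argument one would show inductively that $\|\cC^{(n)}_u\|\leq C(u)^n/n!$ on compact $u$-intervals, giving absolute, uniform-on-compacts convergence of $\sum z^n\cC^{(n)}_u$ as an entire function of $z$.

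The main obstacle I anticipate is in step two, namely choosing the correct combination of time reparametrization, time reversal, and change of basis so that the It\^o calculus for the matrix process $X$ along \eqref{eq:HPsde} produces exactly the drift/noise structure of \eqref{eq:HP_cH} (the vanishing first column, the correct sign pattern, and the $e^{\beta u/4}$ growth). Getting all these conventions consistent, as well as verifying the uniform convergence in $z$ of $\cH_u$ to $[1,0]^t$ at $u\to-\infty$ in the presence of the random path of \eqref{eq:HPsde}, is likely to be the bulk of the technical work, paralleling but extending the $\Sineb$-case argument in \cite{BVBV_szeta}.
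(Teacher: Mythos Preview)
Your proposal contains a genuine gap concerning the independence of $q$ from the Brownian motions. In the statement of the theorem, $q$ is required to be \emph{independent} of $B_1,B_2$. If you work directly with $\Huaop$ as you suggest, the endpoint $q=\lim_{t\to 1}\tl x(t)$ is a measurable functional of the Brownian motions driving \eqref{eq:HPsde}, so the representation you would obtain is $[1,-q]\cH_0$ with $q$ \emph{dependent} on the noise in \eqref{eq:HP_cH}. Nothing in your outline explains why this should equal in law the same expression with an independent $q$. The paper handles this via an additional nontrivial step you are missing: a factorization lemma (Theorem~\ref{thm:factor}, proved by Doob's $h$-transform / enlargement of filtration) showing that after rotating the hyperbolic path by $T_{w_\infty}$ the rotated path solves an SDE with new Brownian motions that are independent of $w_\infty$. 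This is packaged into Lemma~\ref{lem:HPtau}, which constructs a time-reversed operator $\tau^{\ttHP}_{\beta,\delta}$ on $(0,1]$ with driving path built from two-sided Brownian motion and with $q\sim P_{IV}(\Re\delta+1,-2\Im\delta)$ genuinely independent of that path, and proves it has the same law as $\Huaop$. Only after this decoupling does the transfer-matrix and It\^o computation you describe go through to yield the theorem as stated.

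There is also some confusion in your endpoint bookkeeping. For $\Huaop$ on $[0,1)$, the left endpoint $t=0$ is always limit circle (the weight is locally bounded there); the right endpoint $t=1$ is the one that may be limit point. Your proposed time change $u=-\upsilon_\beta(1-t)=\tfrac{4}{\beta}\log t$ actually sends $t=0$ to $u=-\infty$ and $t=1$ to $u=0$, the opposite of what you wrote. In the paper the boundary condition $\cH_u\to[1,0]^t$ at $u=-\infty$ corresponds to the $\uu_0$ condition at $t=0$ of the \emph{reversed} operator, and the evaluation $[1,-q]\cH_0$ records the $\uu_1$ condition at $t=1$; your attempt to impose $[1,0]^t$ at the $t=1$ end of the original operator does not match its actual boundary condition $[-q,-1]^t$ there. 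Once the factorization and reversal are in place, the rest of your plan (It\^o's formula for $X_uH(t(u),z)$, differentiating \eqref{eq:HP_cH} in $z$, solving the $\cB^{(n)}$ equation via the integrating factor $e^{B_2(u)-(\frac12+\Re\delta)u}$) matches the paper's argument.
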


\begin{theorem}[Characterization of $\zeta^{\ttB}_{\beta,a}$]\label{thm:zetaB}
Fix $\beta>0$, $a>-1$. Let $B$ be a two-sided  Brownian motion on $\R$, $ y(t) = \exp(-\frac{\beta}{4}(2a+1) t+B(2t))$ and $\hat y(t)=y(\tfrac{4}{\beta} \log t)$. Denote by $\BB_{\beta,a}$ the spectrum of the operator $\Bessop$, and by $\zeta^{\ttB}=\zeta^{\ttB}_{\beta,a}$ its secular function. 
Then  $\zeta^{\ttB}$ has the same distribution as $
 		1+\sum_{k=1}^{\infty }r_kz^{2k},
$
 	where
 	\begin{align}\label{eq:B_Taylor}
 	r_{k}=(-1)^k2^{-2k}\iiint\limits_{0<s_1<s_2<\cdots<s_{2k}\leq 1} \frac{\hat y(s_2)\hat y(s_4)\cdots \hat y(s_{2k})}{\hat y(s_1)\hat y(s_3)\cdots \hat y(s_{2k-1})} ds_1\cdots ds_{2k}.
 	\end{align}
 Moreover, $\zeta^{\ttB}(z)$ has the same distribution as 	
 $[1,0]\mathcal{H}_0(z)$, where  $\mathcal{H}_u(z)$ is the unique strong solution of the SDE
 	\begin{equation}\label{eq:B_cH}
 	d\mathcal{H} = \begin{pmatrix}
 	0 & 0\\0& \sqrt{2}dB+(1-\frac{\beta}{4}(2a+1))du
 	\end{pmatrix}\mathcal{H}-z\frac{\beta}{8}e^{\beta u/4}J\mathcal{H} du
 	\end{equation}
 with boundary conditions $\lim\limits_{u\to -\infty} \sup_{|z|<1} \left|\cH_u(z)-\binom{1}{0}\right|=0$. 
 \end{theorem}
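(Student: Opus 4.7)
My plan is to mirror the strategy used for Theorem \ref{thm:zetaHP}: first represent the secular function $\zeta^{\ttB}$ as a specific projection of the transfer-matrix solution associated with $\Bessop$; second, convert the transfer-matrix ODE into the SDE \eqref{eq:B_cH} via a time change and a stochastic substitution; and third, iterate the resulting system in powers of $z$ to recover the Taylor formula \eqref{eq:B_Taylor}.

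Because the generating path $i\tl y$ of $\Bessop$ is purely imaginary, the kernel $K_{\res_{\Bessop}}$ is off-antidiagonal, so $\intr_{\Bessop}=0$ and $\zeta^{\ttB}(z)=\det(I-z\res_{\Bessop})$. By the transfer-matrix representation of the secular function for Dirac operators (adapting the approach of \cite{BVBV_szeta}), one has $\zeta^{\ttB}(z)=F(1;z)$, where $\binom{F(\cdot;z)}{G(\cdot;z)}=M(\cdot;z)\uu_0$ and $M$ solves $\partial_t M=-zJRM$, $M(0)=I$ with $R=\tfrac12\operatorname{diag}(\tl y^{-1},\tl y)$. This reduces to the scalar system $\partial_tF=\tfrac{z}{2}\tl y\,G$, $\partial_tG=-\tfrac{z}{2\tl y}F$, $F(0)=1$, $G(0)=0$. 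Iterating in $z$, the odd Taylor coefficients of $F(1;z)$ vanish and
\begin{equation*}
F(1;z)=1+\sum_{k\geq 1}(-1)^k 2^{-2k}z^{2k}\int\limits_{0<s_1<\cdots<s_{2k}<1}\frac{\tl y(s_2)\tl y(s_4)\cdots\tl y(s_{2k})}{\tl y(s_1)\tl y(s_3)\cdots\tl y(s_{2k-1})}\,ds_1\cdots ds_{2k}.
\end{equation*}

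To identify this with \eqref{eq:B_Taylor}, I invoke the joint distributional identity $(\tl y(t))_{t\in[0,1)}\ed(1/\hat y(1-t))_{t\in[0,1)}$, which follows from the explicit formulas $\tl y(t)=(1-t)^{2a+1}\exp(B(-\tfrac{8}{\beta}\log(1-t)))$ and $1/\hat y(1-t)=(1-t)^{2a+1}\exp(-B(\tfrac{8}{\beta}\log(1-t)))$ combined with the two-sided Brownian symmetry $(B(r))_{r\geq 0}\ed(-B(-r))_{r\geq 0}$. Substituting this identity and performing the order-reversing change of variables $s_i\mapsto 1-s_{2k+1-i}$ reproduces \eqref{eq:B_Taylor}. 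For the SDE characterization, I apply the time change $u=\tfrac{4}{\beta}\log(1-t)\in(-\infty,0]$ to the $(F,G)$ system together with the multiplicative substitution $\tilde G=y(-u)\,G$; Itô's formula transfers the diffusive behavior of $y(-u)$ into the explicit $\sqrt{2}\,dB$ term in the second-component equation, yielding \eqref{eq:B_cH} after the two-sided BM identification. Existence, uniqueness, and the boundary condition $\cH_u\to[1,0]^t$ uniformly for $|z|<1$ as $u\to-\infty$ follow from Picard iteration in the space of entire-function-valued processes combined with tail estimates on $\hat y$ near $0$ analogous to \eqref{ineq:hpsde}.

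The main obstacle I foresee is the precise identification of the SDE \eqref{eq:B_cH}: the time change $u=\tfrac{4}{\beta}\log(1-t)$ reverses the direction of the transfer-matrix ODE (which runs from $t=0$ to $t=1$, while the SDE is integrated from $u=-\infty$ to $u=0$), so careful sign bookkeeping is needed to match the exponent $e^{\beta u/4}$, the drift $(1-\tfrac{\beta}{4}(2a+1))du$, and the noise term $\sqrt{2}\,dB$ with correct signs, and to verify $F(1;z)=[1,0]\cH_0(z)$. A secondary subtlety is the boundary asymptotics $\cH_u\to[1,0]^t$ at $u=-\infty$, which requires uniform control on $\hat y$ near $0$ depending on the sign of $\tfrac{\beta}{4}(2a+1)-\tfrac12$, obtained via Brownian tail bounds in the spirit of Proposition \ref{prop:hpoperator}.
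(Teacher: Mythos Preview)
Your overall strategy is close to the paper's, but the packaging is different in a way that creates a gap at the first step. You claim $\zeta^{\ttB}(z)=F(1;z)$, where $(F,G)$ solves the transfer-matrix ODE on $[0,1)$ from $(1,0)$ at $t=0$ and $F(1;z)$ is the limit as $t\to 1$. The results you cite from \cite{BVBV_szeta} (their Propositions 9 and 13) are formulated in the \emph{reversed} framework on $(0,1]$: the ODE is integrated from the possibly limit-point endpoint $t\to 0^+$ with $H\to\uu_0$, and evaluated at the limit-circle endpoint $t=1$. Your ODE runs the other way, toward what may be a limit-point endpoint of $\Bessop$, and while you can check that each iterated integral converges (this is exactly the Hilbert-Schmidt condition \eqref{eq:int_assumption}), the identification of the limit with the secular function is not covered by the cited statements and would need a separate argument.

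The paper avoids this by front-loading the time reversal at the operator level: it introduces the reversed operator $\tau^{\ttB}_{\beta,a}=\Dirop(i\hat y,\uu_0,\uu_1)$ on $(0,1]$ and shows via Lemmas \ref{lem:timerev} and \ref{lem:rotation} that $\rho J\tau^{\ttB}_{\beta,a}J\rho^{-1}\ed\Bessop$, so their secular functions agree in law. Then Propositions 9 and 13 of \cite{BVBV_szeta} apply directly to $\tau^{\ttB}_{\beta,a}$, yielding the Taylor formula \eqref{eq:B_Taylor} immediately in terms of $\hat y$, and the SDE \eqref{eq:B_cH} via $\cH_u=X_uH(e^{\beta u/4},z)$ with $X_u=\operatorname{diag}(1,y(u))$ and the time change $u=\tfrac4\beta\log t$. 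Your distributional identity $(\tl y(t))\ed(1/\hat y(1-t))$ and the order-reversing substitution are correct, and they are precisely the path-level shadow of the operator-level reversal the paper performs first; but by postponing the reversal you have to justify the transfer-matrix representation in a setting where it is not stated. Similarly, your time change $u=\tfrac4\beta\log(1-t)$ with $\tilde G=y(-u)G$ is the mirror image of the paper's $u=\tfrac4\beta\log t$ with $X_u$; the sign bookkeeping you flag is real, and is cleaner in the paper's parametrization because the Brownian motion appearing in $\hat y$ is already the two-sided $B$ of the statement, with no need for a further $(B(r))\ed(-B(-r))$ identification.
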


\begin{remark}
The random analytic function $\zeta^{\ttB}_{\beta,a}$ can also be represented in a product form as follows:
\begin{align}\label{eq:Bess_prod}
    \zeta^{\ttB}_{\beta,a}(z)=\lim_{r\to \infty} \prod_{\substack{\lambda\in \BB_{\beta,a},\\ 0<\lambda<r}} \left(1-\frac{z^2}{\lambda^2}\right).
\end{align}
This follows from definition \eqref{eq:zeta}, the fact that the integral trace of the operator $\Bessop$ is zero, 
and the fact that point process $\BB_{\beta,a}$ is symmetric about 0.  

The random analytic function $\zeta^{\ttHP}_{\beta, \delta}(z)$ should also have a similar representation in terms of its zeros, it should be equal to the principal value product
\begin{align}\label{eq:HP_prod}
    \lim_{r\to \infty} \prod_{\substack{\lambda\in \HPb,\\ |\lambda|<r}}\left(1-\frac{z}{\lambda}\right).
\end{align}
For $\delta=0$ this statement was proved in \cite{BVBV_szeta}. Using the results of the current paper one should be able to extend the proof in \cite{BVBV_szeta} for the general $\delta$ case.
\end{remark}

\section{Convergence of discrete Dirac operators} \label{s:tools}

This section collects some of the tools that will be used to prove Theorems \ref{thm:CJ_op_limit} and \ref{thm:RO_op_limit}.
We first prove a general convergence result for the resolvents and integral traces  of  Dirac operators where the driving paths converge pointwise and are also `regular' in a certain sense.  Then we review some probabilistic tools: a standard result on the convergence of Markov chains to diffusions, and an iterated logarithm type result for products of independent random variables. 

\subsection{Convergence of resolvents and secular functions of Dirac operators}
The following proposition gives a sufficient condition for the convergence of the resolvents and integral traces of deterministic Dirac operators.

\begin{proposition}\label{prop:detlim}
	Suppose that the Dirac operators $\tau^{(n)}, n\in \ZZ_{+}\cup \{\infty\}$ are parametrized by paths $x^{(n)}+i y^{(n)}$  and boundary conditions $\uu_0=[1,0]^t$, $\uu_{1}^{(n)}=[-q^{(n)},-1]^t$. Introduce the notation $\lfloor t\rfloor_n = \lfloor nt\rfloor/n$ with the understanding that $\lfloor t\rfloor_\infty=t$. Assume that 
	there are constants $c_1, c_2>-1$, $c_3>0$, and $\kappa>0$ so that the following bounds hold for all $0\le t<1$,
	    \begin{align}\label{ineq:HS}
	    \kappa^{-1} (1-\lfloor t\rfloor_n)^{c_2}\le y^{(n)}(t)\le \kappa(1-\lfloor t\rfloor_n)^{c_1}, \qquad 
	 |q^{(n)}-x^{(n)}(t)|\le \kappa (1-\lfloor t\rfloor_n)^{c_3}
	    \end{align}
	   uniformly in $n\in \ZZ_{+}\cup\{\infty\}$ with
	    \begin{align}\label{eq:c_bounds}
	    c_3>c_2-1, \qquad c_1>c_2-2.
	    \end{align}
	 Assume that $x^{(n)}+i y^{(n)}\to x^{(\infty)}+i y^{(\infty)}$ point-wise on $[0,1)$. 
	  
	 Then
	 \begin{align}\label{DirConv}
	 \lim_{n\to \infty}    \| \ttr \tau^{(n)}-\ttr \tau^{(\infty)}\|_{HS}=0,\quad\text{and} \quad
	 \lim_{n\to \infty}|\mathfrak{t}_{\tau^{(n)}}- \mathfrak{t}_{\tau^{(\infty)}}|= 0.
	 \end{align}

\end{proposition}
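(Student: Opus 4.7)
The plan is to reduce both claims to the dominated convergence theorem applied to the explicit formulas from \eqref{Dir:inverse} and \eqref{eq:int_tr}. With $\uu_0=[1,0]^t$ and $\uu_1^{(n)}=[-q^{(n)},-1]^t$ a direct calculation gives
\[
\|a^{(n)}(s)\|^2 = \tfrac{1}{y^{(n)}(s)}, \quad \|c^{(n)}(s)\|^2 = \tfrac{(x^{(n)}(s)-q^{(n)})^2}{y^{(n)}(s)} + y^{(n)}(s), \quad a^{(n)}(s)^t c^{(n)}(s) = \tfrac{x^{(n)}(s)-q^{(n)}}{y^{(n)}(s)},
\]
so the kernel $K_{\ttr\tau^{(n)}}$ and the integrand for $\mathfrak{t}_{\tau^{(n)}}$ are explicit in $x^{(n)}, y^{(n)}, q^{(n)}$.

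First I would show $q^{(n)}\to q^{(\infty)}$: for any $t\in[0,1)$ the triangle inequality gives
\[
|q^{(n)}-q^{(\infty)}| \le \kappa(1-\lfloor t\rfloor_n)^{c_3} + |x^{(n)}(t)-x^{(\infty)}(t)| + \kappa(1-t)^{c_3}.
\]
Choosing $t$ close to $1$ kills the boundary terms uniformly in $n$ (using $c_3>0$), after which pointwise convergence of $x^{(n)}(t)$ kills the middle term. Pointwise convergence of $K_{\ttr\tau^{(n)}}(s,t)$ and of the integrand $a^{(n)}(s)^t c^{(n)}(s)$ on $[0,1)^2$ then follows from continuity of the algebraic expressions above in $(x,y,q)$.

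Next I would build $n$-independent integrable majorants. Using $1-s\le 1-\lfloor s\rfloor_n\le 1$, the hypothesized bounds on $y^{(n)}$ and $x^{(n)}-q^{(n)}$ translate to bounds on $\|a^{(n)}(s)\|^2$, $\|c^{(n)}(s)\|^2$, and $|a^{(n)}(s)^t c^{(n)}(s)|$ by $(1-s)^{\alpha\wedge 0}$ for the appropriate ``natural'' exponents $\alpha\in\{-c_2,\,2c_3-c_2,\,c_1,\,c_3-c_2\}$, with constants depending only on $\kappa$. Combined with the symmetrized representation $\|\ttr\tau\|_{HS}^2 = \tfrac12\int_0^1 \|c(s)\|^2\int_0^s \|a(t)\|^2 dt\, ds$ and the inequality $\|a_1c_1^t-a_2c_2^t\|_F^2 \le 2(\|a_1\|^2\|c_1\|^2+\|a_2\|^2\|c_2\|^2)$, this produces a common majorant for both the trace integrand and for $\|K_{\ttr\tau^{(n)}}(s,t) - K_{\ttr\tau^{(\infty)}}(s,t)\|_F^2$.

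Integrability of these majorants is then a short case split on $c_2$ versus $1$. The trace is integrable by $c_3>c_2-1$. For the HS integral one must verify $\int_0^1 (1-s)^\alpha\int_0^s(1-t)^{-c_2}dt\,ds<\infty$ for $\alpha\in\{2c_3-c_2,\,c_1\}$. When $c_2<1$ the inner integral is bounded and it suffices that $2c_3-c_2>-1$ (which holds because $c_3>0$ forces $2c_3-c_2>-c_2>-1$) and $c_1>-1$ (part of the hypothesis). When $c_2\ge 1$ the inner integral contributes an extra factor $(1-s)^{1-c_2}$ (or $-\log(1-s)$ at $c_2=1$), so the effective outer exponents $2c_3-2c_2+1$ and $c_1+1-c_2$ must exceed $-1$, which is precisely $c_3>c_2-1$ and $c_1>c_2-2$. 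Dominated convergence then yields both limits in \eqref{DirConv}. The only real obstacle I anticipate is the bookkeeping around this case split (in particular the $c_2=1$ boundary case with its logarithmic correction), but no technique beyond DCT is required.
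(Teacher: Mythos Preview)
Your approach is the same as the paper's: deduce $q^{(n)}\to q^{(\infty)}$, get pointwise convergence of the kernels from the explicit formulas, and close with a dominated convergence argument. There is, however, a real gap in your majorant step.

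You correctly observe that $1-s\le 1-\lfloor s\rfloor_n\le 1$ yields $(1-\lfloor s\rfloor_n)^{\alpha}\le (1-s)^{\alpha\wedge 0}$, but you then drop the $\wedge 0$ in the integrability check and compute the ``effective outer exponents'' as $2c_3-2c_2+1$ and $c_1+1-c_2$. These two choices are incompatible. With the $\wedge 0$ kept, the majorant fails to be integrable once $c_2\ge 2$: indeed the hypotheses then force $c_1>c_2-2\ge 0$ and $2c_3-c_2>c_2-2\ge 0$, so both outer exponents collapse to $0$ and you are left integrating $(1-s)^{1-c_2}$ with $1-c_2\le -1$. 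Conversely, if you try to use the sharper bound $(1-s)^{\alpha}$ for $\alpha>0$, you would need $(1-\lfloor s\rfloor_n)^{\alpha}\le C(1-s)^{\alpha}$ uniformly on $[0,1)$, and this is false (the ratio blows up on $(1-1/n,1)$, where $1-\lfloor s\rfloor_n=1/n$ while $1-s\to 0$).

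The paper avoids this by invoking the \emph{generalized} dominated convergence theorem: it uses the $n$-dependent majorants coming directly from \eqref{ineq:HS} in terms of $1-\lfloor s\rfloor_n$, notes that these converge pointwise to the $n=\infty$ majorant, and checks that their integrals converge as well. If you prefer a single $n$-free majorant, the clean fix is to use the two-sided estimate $1-s\le 1-\lfloor s\rfloor_n\le 2(1-s)$ on $[0,1-1/n]$ (so that $(1-\lfloor s\rfloor_n)^{\alpha}\le 2^{\alpha}(1-s)^{\alpha}$ there for $\alpha>0$) and treat the remaining strip $(1-1/n,1)$ separately; its contribution is $O(n^{c_2-c_1-2})+O(n^{c_2-2c_3-1})\to 0$ by \eqref{eq:c_bounds}. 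With either correction, the rest of your outline goes through unchanged.
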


\begin{proof}
From the second inequality of \eqref{ineq:HS} and the triangle inequality we have $q^{(n)}\to q^{(\infty)}$.

Denote by $R^{(n)}$ the weight function of $\tau^{(n)}$, and by $X^{(n)}$ the $2\times 2$ matrix defined in \eqref{eq:R}.	Recall that $\ttr \tau^{(n)},n\in\ZZ_{+}\cup\{\infty\}$ is an integral operator with kernel	given by \eqref{Dir:inverse}. From $q^{(n)}\to q^{(\infty)}$ and the pointwise convergence of $x^{(n)}+i y^{(n)}$ we get the pointwise convergence of the integral kernels of $\ttr \tau^{(n)}$ on $[0,1)^2$.

The bounds \eqref{ineq:HS} and the conditions on the constants $c_1, c_2, c_3$ provide  integrable upper bounds for  the functions \begin{align*}
\tr K_{\ttr \tau^{(n)}}(s,s)&= \uu_0^t R^{(n)}(s) \uu_{1}^{(n)}=\frac{x^{(n)}(s)-q^{(n)}}{2 y^{(n)}(s)}, 
\\
\tr K_{\ttr \tau^{(n)}}(s,t)^t K_{\ttr \tau^{(n)}}(s,t)&=\frac{1}{4} \|X^{(n)}(s\vee t) \uu_{1}^{(n)}\|^2\|X^{(n)}(s\wedge t) \uu_0\|^2\\
&=\frac14 \left(\frac{|q^{(n)}-x^{(n)}(s\vee t)|^2}{ y^{(n)}(s\vee t)^2}+1\right) \frac{y^{(n)}(s\vee t)}{y^{(n)}(s\wedge t)},
\end{align*}
on $[0,1)$ and $[0,1)^2$, respectively. This shows that condition \eqref{eq:int_assumption} is satisfied for  $\tau^{(n)}$ for each $n\in \ZZ_{+}\cup\{\infty\}$. Moreover, the General Dominated Convergence Theorem (see e.g. Theorem 1.4.19 in \cite{Royden}) and the point-wise convergence of the kernels lead to  \eqref{DirConv}.
\end{proof}

As an immediate consequence we have the following corollary for random Dirac operators.

\begin{corollary}\label{cor:randomDirac}
     	Suppose that $\tau^{(n)}, n\in \ZZ_{+}\cup \{\infty\}$ are random Dirac operators built from the processes $x^{(n)}+i y^{(n)}$, and boundary conditions $\uu_0=[1,0]^t$ and  
     	$\uu_{1}^{(n)}=[-q^{(n)},-1]$, with random variables $q^{(n)}$. 
     	Assume that the following  conditions are satisfied:
     	
     	\begin{enumerate}
     	    \item  $x^{(n)}+i y^{(n)}\to x^{(\infty)}+i y^{(\infty)}$ 
        in distribution  on $[0,1)$ with respect to the Skorohod topology.
     	\item There exists constants 
      	$c_1, c_2>-1$, $c_3>0$ satisfying \eqref{eq:c_bounds},
		and a sequence of tight positive random variables $\kappa^{(n)}, n\in \ZZ_{+}\cup \{\infty\}$
		so that for $0\leq t<1$
		\begin{align}\label{eq:xyfluc_bound}
		 (\kappa^{(n)})^{-1} (1-\lfloor t\rfloor_n)^{c_2}\le y^{(n)}(t) &\leq \kappa^{(n)}(1-\lfloor t\rfloor_n)^{c_1},\\ 
		|q^{(n)}-x^{(n)}(t)|&\le \kappa^{(n)} (1-\lfloor t\rfloor_n)^{c_3}\label{eq:xyfluc_bound_x}.
		\end{align}
	
     	\end{enumerate}
     	
		Then there is a coupling of $\tau^{(n)}, n\in \ZZ_+\{\infty\}$ so that 
		almost surely both $\| \ttr \tau^{(n)}-\ttr \tau^{(\infty)}\|_{HS}$ and $|\mathfrak{t}_{\tau^{(n)}}- \mathfrak{t}_{\tau^{(\infty)}}|$ converge to $0$ as $n\to \infty$.

\end{corollary}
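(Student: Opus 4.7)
The plan is to deduce the statement from the deterministic Proposition \ref{prop:detlim} via Skorohod's representation theorem, the key difficulty being to upgrade the tightness of $\{\kappa^{(n)}\}$ to an almost sure uniform bound.

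First, by Skorohod's theorem applied to the weak convergence $x^{(n)}+iy^{(n)}\Rightarrow x^{(\infty)}+iy^{(\infty)}$ in the Polish space $D([0,1),\HH)$, I would realize the paths on a common probability space on which the convergence is almost sure. Since the limit is a.s.\ continuous, this upgrades to uniform convergence on every compact subinterval of $[0,1)$, supplying in particular the pointwise convergence required by Proposition \ref{prop:detlim}.

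Second, to handle the $\kappa^{(n)}$'s, I would use that the family $\{(x^{(n)}+iy^{(n)},\kappa^{(n)})\}_n$ is tight in $D([0,1),\HH)\times \R_+$ (joint tightness follows from marginal tightness). Prokhorov extracts from any subsequence a further sub-subsequence along which the joint law converges weakly, and Skorohod provides an almost sure realization; along this sub-subsequence $\kappa^{(n_k)}$ converges a.s.\ to a finite limit, so $\sup_k\kappa^{(n_k)}<\infty$ almost surely. Proposition \ref{prop:detlim} applied $\omega$-wise with $\kappa:=\sup_k\kappa^{(n_k)}(\omega)$ then yields almost sure convergence of both $\|\ttr\tau^{(n_k)}-\ttr\tau^{(\infty)}\|_{HS}$ and $|\mathfrak{t}_{\tau^{(n_k)}}-\mathfrak{t}_{\tau^{(\infty)}}|$ to zero along the sub-subsequence. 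Since every subsequence admits such a sub-subsequence, both quantities converge in probability under this coupling; a final Skorohod construction applied to the full joint law on the countable product space (where tightness reduces to marginal tightness) produces a single coupling in which both convergences hold almost surely along the entire sequence, at which point Proposition \ref{prop:detlim} applied pathwise concludes the proof.

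The main obstacle is this last coupling construction. Marginal tightness of $\{\kappa^{(n)}\}$ alone does not imply $\sup_n\kappa^{(n)}<\infty$ almost surely, and $\kappa^{(n)}$ cannot be decoupled from $x^{(n)}+iy^{(n)}$ since the bounds \eqref{eq:xyfluc_bound}--\eqref{eq:xyfluc_bound_x} must continue to hold. Carefully threading the Skorohod argument through the full joint law of paths and constants so as to reconcile a.s.\ path convergence with an a.s.\ finite $\sup_n\kappa^{(n)}$, while preserving the bound constraints tying each $\kappa^{(n)}$ to its own path, is where the technical effort lies.
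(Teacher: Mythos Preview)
Your approach is essentially the same as the paper's, and the ingredients you identify (joint tightness of $(x^{(n)}+iy^{(n)},\kappa^{(n)})$, Prokhorov, Skorohod, then Proposition~\ref{prop:detlim}) are exactly those used. However, the paper organizes the argument more cleanly in a way that dissolves the obstacle you flag in your last paragraph.

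The paper's target is to show that the \emph{quadruple} $(x^{(n)}+iy^{(n)},\, q^{(n)},\, \ttr\tau^{(n)},\, \mathfrak{t}_{\tau^{(n)}})$ converges in distribution in the appropriate product of separable metric spaces. This is established via the sub-subsequence criterion: given any subsequence, tightness of $(x^{(n)}+iy^{(n)},\kappa^{(n)})$ yields a further sub-subsequence along which this pair converges in distribution; Skorohod realizes this as a.s.\ convergence with $\kappa^{(n_{j(m)})}\to\kappa^{(\infty)}<\infty$ a.s., so $\sup_m\kappa^{(n_{j(m)})}<\infty$ a.s.; Proposition~\ref{prop:detlim} then applies pathwise, giving a.s.\ (hence distributional) convergence of the quadruple along the sub-subsequence. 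Since the limit is always the same, the full sequence of quadruples converges in distribution. A \emph{single} final application of Skorohod to this distributional convergence of quadruples produces the desired a.s.\ coupling.

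The key point is that the final Skorohod step is applied to the quadruple, with $\ttr\tau^{(n)}$ and $\mathfrak{t}_{\tau^{(n)}}$ already sitting inside as elements of separable spaces, not to $(x^{(n)}+iy^{(n)},\kappa^{(n)})$. Hence one never needs $\sup_n\kappa^{(n)}<\infty$ in any global coupling; the $\kappa^{(n)}$'s serve only inside each sub-subsequence coupling to verify the hypotheses of Proposition~\ref{prop:detlim} and are then discarded. Your initial Skorohod step on the paths alone, and the phrase ``convergence in probability under this coupling'', create ambiguity because the sub-subsequence arguments live in different couplings; routing everything through distributional convergence of the quadruple avoids exactly the difficulty you were worried about.
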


\begin{proof}
We will show that the quadruple $(x^{(n)}+i y^{(n)}, q^{(n)},  \ttr \tau^{(n)}, \mathfrak{t}_{\tau^{(n)}})$  converges jointly in distribution to $(x^{(\infty)}+i y^{(\infty)}, q^{(\infty)},  \ttr {\tau^{(\infty)}}, \mathfrak{t}_{\tau^{(\infty)}})$ in the appropriate product space. Since both the space of cadlag functions on $[0,1)$ under the Skorohod topology and the space of $L^2$ bounded integral operators on $\R^2$ 
are separable, the statement follows by  Skorohod's representation theorem (see e.g. Theorem 1.6.7 in \cite{Billingsley}). 

 We have to show that for any subsequence $n_j, j\in \ZZ_+$ we can choose a further subsequence $n_{j(m)}$ along which the appropriate convergence in distribution holds. 
  By the tightness of $\kappa^{(n)}, n\in \ZZ_+$ we may choose $n_{j(m)}$ so that $(x^{(n_{j(m)})}+i y^{(n_{j(m)})}, \kappa^{(n_{j(m)})})\Rightarrow (x^{(\infty)}+i y^{(\infty)}, \kappa^{(\infty)})$ with an a.s.~finite $\kappa^{(\infty)}$. 
Using Skorohod's representation theorem there is a coupling where  this convergence in distribution holds in the a.s.~sense with $x+i y$ converging pointwise on $[0,1)$. We can now use Proposition \ref{prop:detlim} to conclude that in this coupling the quadruple  $(x^{(n_{j(m)})}+i y^{(n_{j(m)})}, q^{(n_{j(m)})},  \ttr \tau^{(n_{j(m)})}, \mathfrak{t}_{\tau^{(n_{j(m)})}})$ converges a.s.~to $(x^{(\infty)}+i y^{(\infty)}, q^{(\infty)},  \ttr \tau^{(\infty)}, \mathfrak{t}_{\tau^{(\infty)}})$ in the appropriate product metric. This also implies convergence in distribution along the subsubsequence $n_{j(m)}$, finishing the proof.
\end{proof}

\subsection{Probabilistic tools}\label{subs:diff_limit}

The following two results will allow us to check the conditions in Corollary \ref{cor:randomDirac}. The first is a special case of a classical result about the diffusion limit of  discrete time Markov chains due to Ethier and Kurtz.

\begin{proposition}\label{prop:as1check}
	Suppose that for each $n\in \ZZ_+$ the the sequence of pairs of random variables  $Z_k^{(n)}=(v_k^{(n)}, w_k^{(n)})$, $0\leq k\leq n-1$ are independent. For a given $n$ let $(x_{k}^{(n)},y_{k}^{(n)}), 0\le k\le n$ be the solution of the recursion \eqref{xyrec} built from $(v_k^{(n)}, w_k^{(n)})$, 
	and introduce the notation  $(x^{(n)}(t), y^{(n)}(t)):=(x^{(n)}_{\lfloor nt\rfloor},y^{(n)}_{\lfloor nt\rfloor})$.
	
	Assume that there exist continuous  functions $a_1,a_2, \sigma_1^2, \sigma_2^2$  on $[0,1)$ such that
	\begin{align}\label{eq:as1check1}
	n\mathbb{E}(Z_{k}^{(n)})&=
	\begin{pmatrix}
	a_1(\tfrac{k}{n}) & a_2(\tfrac{k}{n})
	\end{pmatrix}+\err_1(k,n),\\
	n\mathrm{Cov}(Z_{k}^{(n)},Z_{k}^{(n)})&=
	\begin{pmatrix}
	\sigma_{1}^2(\tfrac{k}{n}) & 0\\ 0& \sigma_{2}^2(\tfrac{k}{n})
	\end{pmatrix}+\err_2(k,n), 
	\end{align}
	and
	\begin{equation}\label{eq:as1check2}
	n\mathbb{E}( |v_{k}^{(n)}|^4+|w_{k}^{(n)}|^4)=\err_3(k,n),
	\end{equation}
	where the error terms satisfy
	\[
	\limsup_{n\to \infty} \max_{k/n\le 1-\delta}|\err_j(k,n)|=0
	\]
	for any $\delta\in (0,1)$, $1\le j\le 3$.

	Then $x^{(n)}+iy^{(n)}$ converges in distribution to $x+iy$, the solution of the stochastic differential equation
	\begin{equation}\label{eq:as1sde}
	dx = (a_1(t)dt+\sigma_1(t)dB_1)y,\quad  dy=(a_2(t)dt+\sigma_2(t)dB_2)y,\quad x(0)=0,y(0)=1,
	\end{equation} 
	on $[0,1)$ with respect to the Skorohod topology. Here $B_1$ and $B_2$ are independent standard Brownian motion. 
\end{proposition}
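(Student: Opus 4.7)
The plan is to invoke a standard diffusion approximation theorem for time-inhomogeneous discrete Markov chains (for example Theorem 7.4.1 or Theorem 8.2.1 of Ethier-Kurtz, \emph{Markov Processes: Characterization and Convergence}). Because the drift and variance functions $a_i, \sigma_i^2$ are only continuous on $[0,1)$ and the limiting SDE may degenerate as $t\to 1$, I would first fix an arbitrary $\delta\in(0,1)$ and prove convergence on $[0,1-\delta]$; since the Skorohod topology on $[0,1)$ is generated by the topologies on these compact subintervals, letting $\delta\to 0$ gives the full result.

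The first main step is to identify the generator. Consider the space-time process $(t,x_k^{(n)},y_k^{(n)})$, which is Markov. For $f\in C_c^\infty(\R^2)$, the one-step generator of the rescaled chain is
\[
A_n f(t,x,y) = n\,\mathbb{E}\bigl[f(x+v_k^{(n)}y,\, y+w_k^{(n)}y)-f(x,y)\bigr], \qquad k=\lfloor nt\rfloor.
\]
Taylor expanding $f$ to second order and using the hypotheses \eqref{eq:as1check1} on the first two moments and conditional covariances of $(v_k^{(n)},w_k^{(n)})$, together with \eqref{eq:as1check2} to absorb the cubic remainder (via Cauchy-Schwarz and the fourth-moment bound, since $\mathbb{E}|v_k^{(n)}|^3\le (\mathbb{E}|v_k^{(n)}|^4)^{3/4}$), one obtains
\[
A_n f(t,x,y)\;\longrightarrow\;Lf(t,x,y)\;:=\;a_1(t)\,y f_x+a_2(t)\,y f_y+\tfrac12\sigma_1^2(t)\,y^2 f_{xx}+\tfrac12\sigma_2^2(t)\,y^2 f_{yy},
\]
uniformly on compact subsets of $[0,1-\delta]\times\R^2$, which is exactly the generator of the SDE \eqref{eq:as1sde}.

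The second step is tightness of the laws of $x^{(n)}+iy^{(n)}$ on $D([0,1-\delta])$. Since the one-step increments $(v_k^{(n)}y_k^{(n)},w_k^{(n)}y_k^{(n)})$ have second moments of order $1/n$ uniformly in $k\le n(1-\delta)$ (again by the hypotheses), and since $y_k^{(n)}=\prod_{j<k}(1+w_j^{(n)})$ is a positive martingale perturbation whose moments remain bounded on this range (standard estimate using $\log(1+w)=w-w^2/2+\O(w^3)$ and the given moment bounds), the Aldous tightness criterion applies. The last step is to identify the limit via the martingale problem: on $[0,1-\delta]$ the coefficients of \eqref{eq:as1sde} are continuous in $t$ and globally Lipschitz in $(x,y)$ on any bounded set of $y$-values, so existence and uniqueness of strong solutions hold on $[0,1-\delta]$, and the martingale problem for $L$ has a unique solution starting from $(0,1)$. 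Any subsequential limit of the $A_n$-martingale problem therefore coincides with this solution, giving convergence in distribution.

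The main obstacle I anticipate is routine but care-demanding: controlling the Taylor remainder in the generator computation uniformly in $k$ up to $k=n(1-\delta)$, and simultaneously bounding moments of $y_k^{(n)}$ on this range so that one may replace $f\in C_c^\infty$ by cutoffs without losing information. Both are handled by the fourth moment condition \eqref{eq:as1check2} combined with a telescoping argument for $\log y_k^{(n)}$. Everything else is a direct verification of the hypotheses in the Ethier-Kurtz framework, followed by the routine diagonal extraction in $\delta$ to extend convergence to all of $[0,1)$.
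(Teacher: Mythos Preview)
Your proposal is correct and takes essentially the same approach as the paper: the paper's entire proof is a direct citation of Theorem 7.4.1 and Corollary 7.4.2 of Ethier--Kurtz (with a pointer to Stroock--Varadhan), whereas you have unpacked what the verification of those hypotheses looks like (generator convergence via Taylor expansion with the fourth-moment bound absorbing the remainder, localization to $[0,1-\delta]$, uniqueness of the martingale problem). Your outline is more detailed than the paper's one-line citation, but it is the same argument.
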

\begin{proof}
	The  proposition follows from Theorem 7.4.1 and Corollary 7.4.2 of \cite{EthierKurtz} (see Section 11.2 in \cite{SV} as well).
\end{proof}


Our next statement  provides a sufficient condition to check the inequality  \eqref{eq:xyfluc_bound} for our models. 
The  proposition is a straightforward extension of  Lemma 5 of \cite{RR}, we do not present the proof here. (See (2.4)-(2.5) of Lemma 5 and also Claim 10 in \cite{RR}.)
\begin{proposition}\label{prop:ygeneral}
	Let $\xi^{(n)}_{k},0\leq k\leq n-1, 1\le n$ be a positive triangular array with independent entries for any given $n$. Define $y_{j}^{(n)}=\prod_{k=0}^{j-1}\xi^{(n)}_{k}$. Assume that there are constants  $\lambda_0>0$, $c_1\in\R$ and $c_2,c_3>0$, 	so that for $|\lambda|<\lambda_0$ and  $0\leq j\leq n-1$ we have
	\begin{align}\label{eq:mgfA_n}
	\log \ev [\exp( \lambda \log y^{(n)}_{j})] = c_1\lambda\log(1-\tfrac{j}{n})-c_2\lambda^2\log(1-\tfrac{j}{n}) + \err_n(j),
	\end{align} 
	where $|\err_n(j)|\leq c_3$  for all $j, n$. Then for any $\eps>0$ small, there exists a sequence of tight positive random variables $\kappa^{(n)}=\kappa^{(n)}(\eps)$ such that for all $0\leq k \le n-1$ we have
	\begin{align*}
	(\kappa^{(n)})^{-1}(1-\tfrac{k}{n})^{c_1+\eps} \le y^{(n)}_{k}\le \kappa^{(n)}(1-\tfrac{k}{n})^{c_1-\eps}.
	\end{align*}
\end{proposition}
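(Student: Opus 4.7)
Write $L_j=\log y_j^{(n)}$. Since the factors $\xi_k^{(n)}$ are independent, the MGF of $L_j$ factorises, so the assumption translates into
\[
\sum_{k=0}^{j-1}\phi_k^{(n)}(\lambda) = (c_1\lambda-c_2\lambda^2)\log(1-\tfrac{j}{n}) + \err_n(j),\qquad \phi_k^{(n)}(\lambda):=\log\ev[(\xi_k^{(n)})^\lambda],
\]
for $|\lambda|<\lambda_0$; telescoping shows each $\phi_k^{(n)}(\lambda)$ is finite. The central object in the proof will be the exponential martingale
\[
N_j(\lambda)=\exp\Bigl(\lambda L_j-\sum_{k=0}^{j-1}\phi_k^{(n)}(\lambda)\Bigr) = \exp\bigl(\lambda L_j-(c_1\lambda-c_2\lambda^2)\log(1-\tfrac{j}{n})-\err_n(j)\bigr),
\]
which is a positive $\mathcal F_j$-martingale with $N_0(\lambda)=1$, as follows from independence of the $\xi_k^{(n)}$.

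Fix $\eps\in(0,\lambda_0 c_2)$ small and set $\lambda=\eps/c_2$. Doob's $L^1$ maximal inequality gives $P(\max_{0\le j\le n}N_j(\lambda)\ge M)\le 1/M$, so $\max_j N_j(\eps/c_2)$ is tight in $n$; the same estimate holds for $\max_j N_j(-\eps/c_2)$ (we use that both $\pm\eps/c_2$ lie in $(-\lambda_0,\lambda_0)$). Solving the defining identity for $L_j$ with $\lambda=\eps/c_2$ produces
\[
L_j = (c_1-\eps)\log(1-\tfrac{j}{n}) + \tfrac{c_2}{\eps}\bigl(\log N_j(\eps/c_2)+\err_n(j)\bigr),
\]
and with $\lambda=-\eps/c_2$ the analogous identity with $c_1+\eps$ in place of $c_1-\eps$ and $-\log N_j(-\eps/c_2)$ in place of $\log N_j(\eps/c_2)$ (the sign flip of the last term makes the maximum, rather than the minimum, of $N_j$ appear in the lower bound as well). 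Using $|\err_n(j)|\le c_3$ and exponentiating gives, uniformly in $0\le j\le n-1$,
\[
(1-\tfrac{j}{n})^{c_1+\eps}\,e^{-c_2c_3/\eps}\bigl(\max_j N_j(-\eps/c_2)\bigr)^{-c_2/\eps}\le y_j^{(n)}\le(1-\tfrac{j}{n})^{c_1-\eps}\,e^{c_2c_3/\eps}\bigl(\max_j N_j(\eps/c_2)\bigr)^{c_2/\eps}.
\]
Taking $\kappa^{(n)}$ to be the maximum of the two prefactors yields a tight sequence with the required property.

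\textbf{Main obstacle.} The argument is essentially bookkeeping around the exponential martingale, so there is no genuine obstruction; the one point to be careful about is that the two-sided control (upper and lower bounds) forces us to use both a positive and a negative exponential tilt, which is why the hypothesis is stated for $|\lambda|<\lambda_0$ rather than just for small positive $\lambda$. The bounded error $\err_n(j)$ sits inside the exponent of the martingale and only contributes a deterministic factor $e^{\pm c_2c_3/\eps}$, which is absorbed into $\kappa^{(n)}$ without affecting tightness.
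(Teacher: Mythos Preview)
Your argument is correct. The paper does not present its own proof, only referring to Lemma~5 and Claim~10 of \cite{RR}; the approach there is precisely the exponential-martingale/Chernoff argument you give, so your proof is in line with the intended one. One cosmetic point: the maximum should run over $0\le j\le n-1$ (not $n$), matching the range in the hypothesis; this does not affect anything since the martingale is only defined and needed on that range.
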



\section{Path convergence for the discrete models} \label{s:pathconv}
In this section, we prove that the driving paths of the operators $\CJop$ and $\ROop$ converge in distribution to the driving paths of the operators $\Huaop$ and $\Bessop$, respectively. For this we will check that the discrete models satisfy the conditions in Proposition \ref{prop:as1check}. 
\subsection{Circular Jacobi ensemble}

Recall the definition of the distributions $\Theta(a+1,\delta)$ and $P_{IV}(m,\mu)$ from Definitions \ref{def:Theta}  and \ref{def:Pearson}. We also introduce an additional distribution. 

\begin{definition}
For $s,t>0$ let $\mathrm{B}'(s,t)$ denote the `beta prime' distribution on $(0,\infty)$  that has the probability density function 
	\begin{align*}
\tfrac{\Gamma(s+t)}{\Gamma(s)\Gamma(t)}y^{s-1}(1+y)^{-s-t}.
	\end{align*} 

\end{definition}
Note that if $X_i, i=1,2$ are independent Gamma distributed random variables with density $\Gamma(\alpha_i)^{-1} x^{\alpha_i-1} e^{-x}$ on $(0,\infty)$ then $\frac{X_1}{X_2}$ has $\mathrm{B}'(\alpha_1,\alpha_2)$ distribution, and $\frac{X_2-X_1}{X_1+X_2}$ has $\mathrm{\tl B}(\alpha_1,\alpha_2)$ distribution.

The following statement follows by a simple change of variables. 

\begin{fact}\label{fact:factor}
Suppose that $\gamma\in \CC$ is distributed as $\Theta(a+1,\delta)$ with $a\ge 0$ and $\Re \delta>-1/2$. Define $w, v\in \R$ with $\tfrac{2\gamma}{1-\gamma}= w-iv$. 
Then the random variables $w$ and $\frac{v}{2+w}$ are \underline{independent}, and 
\begin{align*}
    1+w \sim \mathrm{B}'(\tfrac{a}{2},\tfrac{a}{2}+2\Re \delta+1), \qquad 
    \tfrac{v}{2+w} \sim P_{IV}(\tfrac{a}{2}+\Re\delta+1,-2 \Im \delta). 
\end{align*}
In the $a=0$ case $w$ degenerates to $-1$, and hence $\frac{v}{2+w}=v$.
\end{fact}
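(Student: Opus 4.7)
The plan is to verify the statement by an explicit change of variables from the density of $\Theta(a+1,\delta)$ in Definition \ref{def:Theta}. The M\"obius transformation $\gamma \mapsto \tfrac{2\gamma}{1-\gamma} = w - \im v$ carries the open unit disk biholomorphically onto the half-plane $\{\Re\zeta>-1\}$ with inverse $\zeta \mapsto \zeta/(2+\zeta)$, so $w>-1$ on the image of the support. Short calculations using the inverse map yield
\[
1 - |\gamma|^2 = \tfrac{4(1+w)}{(2+w)^2+v^2}, \qquad 1-\gamma = \tfrac{2}{(2+w)-\im v}, \qquad 1-\bar\gamma = \tfrac{2}{(2+w)+\im v},
\]
and the Jacobian of $(w,v)\mapsto(\Re\gamma,\Im\gamma)$ equals $|f'(w-\im v)|^2 = 4/((2+w)^2+v^2)^2$, where $f(\zeta) = \zeta/(2+\zeta)$.

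Next I would handle the factor $(1-\gamma)^{\bar\delta}(1-\bar\gamma)^\delta$ by writing $(2+w) \pm \im v = re^{\pm \im\theta}$ with $r = \sqrt{(2+w)^2+v^2}$ and $\theta = \arctan(v/(2+w)) \in (-\pi/2,\pi/2)$, which is legitimate since $2+w>0$. This gives
\[
(1-\gamma)^{\bar\delta}(1-\bar\gamma)^\delta = 2^{2\Re\delta}\,r^{-2\Re\delta}\,e^{2\Im\delta\,\arctan(v/(2+w))}.
\]
Substituting into the $\Theta(a+1,\delta)$ density and multiplying by the Jacobian produces a joint density of $(w,v)$ on $(-1,\infty)\times\RR$ proportional to
\[
(1+w)^{a/2-1}\,\big((2+w)^2+v^2\big)^{-a/2-\Re\delta-1}\,\exp\!\big(2\Im\delta\,\arctan\tfrac{v}{2+w}\big).
\]

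The decisive step is then the substitution $y = v/(2+w)$, for which, at fixed $w$, $dv = (2+w)\,dy$. This causes the joint density to split cleanly as
\[
(1+w)^{a/2-1}(2+w)^{-a-2\Re\delta-1}\,\cdot\,(1+y^2)^{-a/2-\Re\delta-1}\,e^{2\Im\delta\,\arctan y},
\]
exhibiting the independence of $w$ and $v/(2+w)$. The $w$-marginal matches the density of $\mathrm{B}'(a/2,\,a/2+2\Re\delta+1)$ for $1+w$, and the $y$-marginal matches the density from Definition \ref{def:Pearson} with $m = a/2+\Re\delta+1$, $\mu = -2\Im\delta$ (using the identity $|\Gamma(m+\tfrac\mu2 \im)|^2 = |\Gamma(a/2+1+\delta)|^2$). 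The normalizing constants are checked directly against $c_{a,\delta}$ using standard Gamma identities.

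For the degenerate case $a=0$, the distribution $\Theta(1,\delta)$ is supported on the unit circle, whose image under the M\"obius map is the vertical line $\{w=-1\}$; the analogous one-variable calculation collapses directly to the $v$-marginal, yielding the claimed $P_{IV}(\Re\delta+1,-2\Im\delta)$ distribution (and here $v/(2+w) = v$ trivially). The only obstacle is routine bookkeeping --- tracking complex exponents via polar coordinates and reconciling the power-of-two and Gamma-function constants --- with no conceptual difficulty.
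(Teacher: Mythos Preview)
Your proof is correct and carries out explicitly the change of variables that the paper only alludes to (the paper's entire proof is the sentence ``The following statement follows by a simple change of variables''). Your approach is exactly what the paper intends, just written out in full detail.
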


We record here the following facts of the beta prime and Pearson type $\textrm{IV}$ distributions.
\begin{fact}\label{fact:moments}
	Let $s, t>0$, and $Y\sim \mathrm{B}'(s,t)$. Then for any $-s<k<t$,
		\begin{align*}
	\mathbb{E}[Y^k] = \tfrac{\Gamma(s+k)\Gamma(t-k)}{\Gamma(s)\Gamma(t)}.
	\end{align*}
	Let $m>5/2$, $\mu\in \R$, and $Z\sim P_{IV}(m,\mu)$. Then we have
	\begin{align*}
	    \ev[Z]=-\tfrac{\mu}{2m-2}, \qquad \ev[Z^2]=\tfrac{2m-2+\mu^2}{(2m-2)(2m-3)},\qquad 	\ev[Z^4]=\tfrac{12 (m+(\mu^2-3)/2)^2-2 \mu ^4 -2\mu^2-3}{(2m-5)(2m-4)(2m-3)(2m-2)}. 
	\end{align*}
\end{fact}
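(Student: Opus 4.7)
\textbf{Proof proposal for Fact \ref{fact:moments}.}

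The first claim is an elementary computation. I would write
\[
\ev[Y^k]=\tfrac{\Gamma(s+t)}{\Gamma(s)\Gamma(t)}\int_0^\infty y^{s+k-1}(1+y)^{-s-t}\,dy,
\]
recognize the integral as the classical beta integral $B(s+k,t-k)=\Gamma(s+k)\Gamma(t-k)/\Gamma(s+t)$, which converges exactly when $-s<k<t$, and read off the claimed formula.

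For the Pearson type $\mathrm{IV}$ moments, my plan is to establish a three-term recurrence via integration by parts. Writing $f(x)=(1+x^2)^{-m}e^{-\mu\arctan x}$ (the unnormalized density), a direct differentiation gives the ODE
\[
(1+x^2)f'(x)=-(2mx+\mu)f(x).
\]
Multiplying by $x^k$ and integrating from $-\infty$ to $\infty$, integration by parts turns the left side into $-\int(kx^{k-1}(1+x^2)+2x^{k+1})f\,dx$, while the right side gives $-\int(2mx^{k+1}+\mu x^k)f\,dx$. Provided the boundary term $[x^k(1+x^2)f(x)]_{-\infty}^\infty$ vanishes, I obtain
\[
(2m-k-2)\,\ev[Z^{k+1}]=k\,\ev[Z^{k-1}]-\mu\,\ev[Z^k].
\]
The boundary term vanishes exactly when $k<2m-2$; taking $k=3$ to reach $\ev[Z^4]$ requires $m>5/2$, which is why this hypothesis appears in the statement.

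With the recurrence in hand, I would iterate starting from $\ev[Z^0]=1$: taking $k=0,1,2,3$ successively gives first $\ev[Z]$, then $\ev[Z^2]$, then $\ev[Z^3]$, and finally $\ev[Z^4]$. The first two matches with the stated formulas are immediate, and plugging $\ev[Z^2]$ and $\ev[Z^3]$ into the $k=3$ recurrence and simplifying yields the stated formula for $\ev[Z^4]$. There is no serious obstacle here; the only thing to be a bit careful about is tracking the polynomial in $\mu$ and $m$ in the final simplification, and verifying that the completed-square form $12(m+(\mu^2-3)/2)^2-2\mu^4-2\mu^2-3$ in the numerator agrees with what the recurrence produces.
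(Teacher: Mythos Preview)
Your proposal is correct. The paper states this as a ``Fact'' without proof, so there is nothing to compare against; your beta-integral evaluation for $\mathrm{B}'(s,t)$ and the three-term recurrence $(2m-k-2)\ev[Z^{k+1}]=k\,\ev[Z^{k-1}]-\mu\,\ev[Z^k]$ obtained from the Pearson ODE $(1+x^2)f'=-(2mx+\mu)f$ are the standard route and carry through exactly as you describe, including the boundary-term analysis that explains the hypothesis $m>5/2$.
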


We are now ready to prove that the driving paths of the operators $\CJop$  converge to the driving path of the operator $\Huaop$.

\begin{proposition}\label{prop:cjpath} Fix $\beta>0$ and $\delta\in \CC$ with $\Re\delta>-1/2$.
	Let $\{\gamma_{k}^{(n)},0\leq k\leq n-1\}$ be random variables 
	that are independent for a fixed $n$, and have distributions $\gamma^{(n)}_{k}\sim \Theta(\beta(n-k-1)+1,\delta)$. Define $v^{(n)}_{k}, w^{(n)}_{k}\in \R$ via \eqref{def:wv} using $\gamma_k=\gamma^{(n)}_{k}$, and let $x^{(n)}_k, y^{(n)}_k, 0\le k\le n$ be the solution of the recursion \eqref{xyrec} using $v_k=v^{(n)}_{k}$, $w_k=w^{(n)}_{k}$.  Set $(x^{(n)}(t), y^{(n)}(t)):=(x^{(n)}_{\lfloor nt\rfloor},y^{(n)}_{\lfloor nt\rfloor})$.
	 Let $\tl x+\tl y$ be the  process defined in Proposition \ref{prop:hpoperator}. Then  $x^{(n)}+iy^{(n)}$ converges in distribution to $\tl x+ i \tl y$  on $[0,1)$ with respect to the Skorohod topology. 
\end{proposition}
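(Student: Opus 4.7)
The plan is to apply Proposition~\ref{prop:as1check} to the triangular array $(v_k^{(n)}, w_k^{(n)})$ with limiting coefficients
\[
a_1(t) = \tfrac{4\Im\delta}{\beta(1-t)}, \qquad a_2(t) = -\tfrac{4\Re\delta}{\beta(1-t)}, \qquad \sigma_1^2(t) = \sigma_2^2(t) = \tfrac{4}{\beta(1-t)},
\]
all of which are continuous on $[0,1)$, and then to identify the resulting limit SDE with the one satisfied by $\tl x + i \tl y$ by applying the deterministic time change $s = \upsilon_\beta(t)$ to \eqref{eq:HPsde}. Since Proposition~\ref{prop:as1check} requires uniformity of the error terms only on every compact $[0,1-\delta]$, the singularity of the coefficients at $t=1$ does not obstruct its application.

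Fact~\ref{fact:factor} writes $1 + w_k^{(n)} = Y$ and $v_k^{(n)} = (2 + w_k^{(n)}) Z$, with $Y \sim \mathrm{B}'(a/2,\, a/2 + 2\Re\delta + 1)$ and $Z \sim P_{IV}(a/2 + \Re\delta + 1,\, -2\Im\delta)$ independent, where $a = \beta(n-k-1)$. Expanding the gamma-function ratios of Fact~\ref{fact:moments} in powers of $1/a$ and multiplying by $n$ yields, uniformly on $k/n \le 1-\delta$, the convergences $n\ev[w_k^{(n)}] \to a_2(k/n)$, $n\ev[v_k^{(n)}] \to a_1(k/n)$, $n\Var(w_k^{(n)}) \to \sigma_2^2(k/n)$, and $n\Var(v_k^{(n)}) \to \sigma_1^2(k/n)$. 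The independence of $Y$ and $Z$ together with $\ev[Z] = O(1/n)$ forces $n\,\mathrm{Cov}(v_k^{(n)}, w_k^{(n)}) \to 0$. For the fourth moment condition \eqref{eq:as1check2}, Fact~\ref{fact:moments} gives $\ev[(w_k^{(n)})^4], \ev[(v_k^{(n)})^4] = O(1/n^2)$, provided the $P_{IV}$ parameter $m = a/2 + \Re\delta + 1$ exceeds $5/2$; this holds on $k/n \le 1-\delta$ for all $n$ large enough, so $n\,\ev[|v_k^{(n)}|^4 + |w_k^{(n)}|^4] = O(1/n)$.

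With the hypotheses verified, Proposition~\ref{prop:as1check} delivers $x^{(n)} + i y^{(n)} \Rightarrow x^* + i y^*$ in the Skorohod topology on $[0,1)$, where $(x^*, y^*)$ is the strong solution of \eqref{eq:as1sde} with the coefficients above and initial data $x^*(0) = 0$, $y^*(0) = 1$. A direct It\^o computation then confirms that if $(x, y)$ solves \eqref{eq:HPsde}, then $(x \circ \upsilon_\beta, y \circ \upsilon_\beta) = (\tl x, \tl y)$ satisfies the same SDE as $(x^*, y^*)$: the factor $\upsilon_\beta'(t) = \tfrac{4}{\beta(1-t)}$ rescales the drifts, and the time-changed Brownian motions $B_i(\upsilon_\beta(t))$ can be represented as $\int_0^t \tfrac{2}{\sqrt{\beta(1-u)}}\, dW_i(u)$ for independent standard Brownian motions $W_i$. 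Hence $(x^*, y^*) \ed (\tl x, \tl y)$ and the claim follows.

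The main technical point is the uniformity of the moment expansions in $k$ on the range $k/n \le 1-\delta$. Both the $\mathrm{B}'$ parameter $a/2$ and the $P_{IV}$ parameter $m$ depend on $k$ and must stay bounded away from their integrability thresholds (namely $2$ for the variance of $\mathrm{B}'$ and $5/2$ for the fourth moment of $P_{IV}$), but since $a = \beta(n-k-1)$ grows linearly in $n$ uniformly on this range, standard Taylor expansions of gamma-function ratios are enough to control all the error terms.
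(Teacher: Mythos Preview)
Your proposal is correct and follows essentially the same approach as the paper: both verify the hypotheses of Proposition~\ref{prop:as1check} via the factorization of Fact~\ref{fact:factor} and the moment formulas of Fact~\ref{fact:moments}, and then identify the limiting SDE as the time change $\upsilon_\beta$ of \eqref{eq:HPsde}. The paper writes out the exact moment expressions \eqref{eq:wmoments}--\eqref{eq:vmoments} rather than describing them asymptotically, but the substance of the argument is identical.
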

\begin{proof}
Let $N_{\delta}=\lceil\frac2\beta(2-\Re\delta)\rceil\vee 0$.

Set $z^{(n)}_{k}=v^{(n)}_{k}/(2+ w^{(n)}_{k})$. By Fact \ref{fact:factor} we have that $1+w^{(n)}_{k}$ and $z^{(n)}_{k}$ are independent with distributions 
\begin{align}\label{eq:w_dist}
1+w^{(n)}_{k}&\sim \mathrm{B}'(\tfrac{\beta}{2}(n-k-1),\tfrac{\beta}{2}(n-k-1)+2\Re \delta+1), \\ z^{(n)}_{k}&\sim P_{IV}(\tfrac{\beta}{2}(n-k-1)+\Re\delta+1,-2 \Im \delta).\label{eq:z_dist}
\end{align}
From  Fact \ref{fact:moments}, we get that for $0\leq k\leq n-N_\delta-1$ 
	\begin{align}
	\mathbb{E}[w^{(n)}_{k}] = \tfrac{-4\Re\delta}{\beta(n-k-1)+4\Re\delta},  \qquad \mathbb{E}[(w^{(n)}_{k})^2]=\tfrac{4\beta(n-k-1)-8\Re\delta+16(\Re\delta)^2}{(\beta(n-k-1)+4\Re\delta-2)(\beta(n-k-1)+4\Re\delta)}, \label{eq:wmoments}\\
	\mathbb{E}[v^{(n)}_{k}]= \tfrac{4\Im\delta}{\beta(n-k-1)+4\Re\delta},\qquad\mathbb{E}[(v^{(n)}_{k})^2]= \tfrac{4\beta(n-k-1)+8\Re\delta+16(\Im\delta)^2}{(\beta(n-k-1)+4\Re\delta-2)(\beta(n-k-1)+4\Re\delta)}\label{eq:vmoments}.
	\end{align}
	Moreover, there exists a constant $c>0$ so that for $0\le k \le n-N_\delta-1$ we have
	\[
	\left|\ev[v^{(n)}_{k}w^{(n)}_{k}]\right|+\ev[(v^{(n)}_{k})^4]+\ev[(w^{(n)}_{k})^4]\le c (n-k)^{-2}.
	\]
	This means that 
	the conditions \eqref{eq:as1check1} and \eqref{eq:as1check2} of Proposition \ref{prop:as1check} are satisfied with the functions $a_1(t)=\Im\delta \upsilon_\beta'(t)$, $a_2(t)=-\Re\delta \upsilon_\beta'(t)$, $\sigma_{1}^2(t)=\sigma_{2}^2(t)=\upsilon_\beta'(t)$, with $\upsilon_\beta(t)=-\tfrac{4}{\beta}\log(1-t)$.
	Hence the processes $x^{(n)}(t)+i y^{(n)}(t)$ converge in distribution to the solution of the sde
	\begin{equation}\label{eq:sde_99}
	dx=\left(\Im\delta \upsilon_\beta'(t) dt +\sqrt{\upsilon_\beta'(t)}dB_1\right) y, \quad 		dy=\left(-\Re\delta \upsilon_\beta'(t) dt +\sqrt{\upsilon_\beta'(t)}dB_2\right) y 
	\end{equation}
	with independent Brownian motions $B_1,B_2$ and initial values $x(0)=0,y(0)=1$.  The distribution of the process in \eqref{eq:sde_99}  is the same as that of the SDE \eqref{eq:HPsde} with the time change $t\to \upsilon_\beta(t)$, which is completes the proof of the proposition.
\end{proof}

\subsection{Real orthogonal ensemble}
Now we turn to the path convergence of the real orthogonal ensemble. By Theorem \ref{thm:OrthVer}, the modified Verblunsky coefficients of the real orthogonal ensemble are all real. Hence \eqref{def:wv} and \eqref{xyrec} imply that $v_k=x_k=0$, $1+w_k=\tfrac{1+\gamma_k}{1-\gamma_k}$, and $y_k=\prod_{j=0}^{k-1} \tfrac{1+\gamma_k}{1-\gamma_k}$.


\begin{proposition}\label{prop:orthpath}
	 Fix $a,b>-1,\beta>0$. Let $\{\gamma^{(2n)}_{k},0\leq k\leq 2n-1\}$ be  random variables that are independent for a fixed $n$ with the following distributions: $\gamma^{(2n)}_{2n-1}=-1$, and for $0\le k\le 2n-2$
	 \begin{align}
	  \gamma^{(2n)}_{k}\sim \begin{cases}
	    \mathrm{\tl B}\big(
	\tfrac{\beta}{4}(2n-k+2a)
, \tfrac{\beta}{4}(2n-k+2b)
\big),\quad &\text{if $k$ is even,}\\
\mathrm{\tl B}\big(\tfrac{\beta}{4}(2n-k+2a+2b+1),
	\frac{\beta}{4}(2n-k-1)
	\big),\quad &\text{if $k$ is odd.}
	    \end{cases}
	 \end{align}	
	 Define $y^{(2n)}(t)= \prod_{k=0}^{\lfloor 2nt\rfloor-1}\frac{1+\gamma^{(2n)}_{k}}{1-\gamma^{(2n)}_{k}}$ for all $0\le t<1$. Let $\tl y$ be the process  defined in Proposition \ref{prop:hardedge}. Then  $y^{(2n)}$ converges in distribution to  $\tl y$ on  $[0,1)$ with respect to the Skorohod topology.
\end{proposition}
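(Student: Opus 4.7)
Since all $\gamma^{(2n)}_k$ are real, the recursion \eqref{xyrec} reduces to $x^{(2n)}\equiv 0$ and $y^{(2n)}_k=\prod_{i<k}Y_i$, where $Y_i:=(1+\gamma^{(2n)}_i)/(1-\gamma^{(2n)}_i)$. A direct application of Proposition~\ref{prop:as1check} to the $2n$-step chain fails: the $\tl{\mathrm{B}}$-parameters of $\gamma^{(2n)}_k$ depend on the parity of $k$, so $\ev[Y_k-1]$ oscillates at scale $1/n$ and the rescaled drift does not converge to a continuous function. The plan is to collapse the oscillation by pairing. Set $\hat w^{(n)}_j:=Y_{2j}Y_{2j+1}-1$, $0\le j\le n-1$; these are independent and satisfy $y^{(2n)}(j/n)=\prod_{i<j}(1+\hat w^{(n)}_i)$, so it suffices to show that the paired process $\tl y^{(n)}(t):=\prod_{i<\lfloor nt\rfloor}(1+\hat w^{(n)}_i)$ converges to the right limit.

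The change of variables $U_k=(1-\gamma^{(2n)}_k)/2$ turns $\tl{\mathrm{B}}(s,t)$ into $\mathrm{Beta}(s,t)$, so $Y_k=(1-U_k)/U_k\sim \mathrm{B}'(t_k,s_k)$ with shape parameters of order $n-j$ read off from Theorem~\ref{thm:OrthVer}. The identity $\ev[Y^m]=\Gamma(t+m)\Gamma(s-m)/(\Gamma(t)\Gamma(s))$ combined with $\Gamma(x+c)/\Gamma(x)=x^c(1+O(x^{-1}))$ yields, uniformly in $j/n\le 1-\eps$,
\[
n\,\ev[\hat w^{(n)}_j]=a_2(j/n)+o(1),\qquad n\,\Var(\hat w^{(n)}_j)=\sigma_2^2(j/n)+o(1),\qquad n\,\ev|\hat w^{(n)}_j|^4=O(n^{-1}),
\]
with $a_2(u)=-(2a+1-4/\beta)/(1-u)$ and $\sigma_2^2(u)=8/(\beta(1-u))$. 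The drift limit hinges on the cancellation $(2b-2a+4/\beta)+(-2a-2b-2+4/\beta)=-(4a+2-8/\beta)$ between the even- and odd-parity contributions; note in particular that $b$ drops out, consistent with the $b$-independence of $\tl y$. Proposition~\ref{prop:as1check} applied with $v^{(n)}_j\equiv 0$ then gives Skorohod convergence on $[0,1)$ of $\tl y^{(n)}$ to the strong solution of $dy=(a_2(t)\,dt+\sigma_2(t)\,dB)y$, $y(0)=1$. Applying It\^o's formula to $\log\tl y(t)=(2a+1)\log(1-t)-B(-\tfrac{8}{\beta}\log(1-t))$ shows this SDE has exactly the distribution of $\tl y$.

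It remains to bridge $\tl y^{(n)}$ and $y^{(2n)}$: by construction $y^{(2n)}(t)=\tl y^{(n)}(t)\cdot Y_{2\lfloor nt\rfloor}^{\ind\{\lfloor 2nt\rfloor\text{ odd}\}}$, whence $\sup_{t\le 1-\eps}|y^{(2n)}(t)-\tl y^{(n)}(t)|\le (\sup_{t\le 1-\eps}\tl y^{(n)}(t))\cdot\max_{k\le 2n(1-\eps)}|Y_k-1|$. The supremum on the right is tight (it converges to $\sup_{t\le 1-\eps}\tl y(t)$, which is a.s.\ finite), and the uniform bound $\ev[(Y_k-1)^{2p}]=O(n^{-p})$ for $k\le 2n(1-\eps)$ and each $p\in\N$, combined with a union bound, yields $\max_k|Y_k-1|\to 0$ in probability. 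Hence the paired and full chains are uniformly close in probability on $[0,1-\eps]$, and $y^{(2n)}\Rightarrow\tl y$ on $[0,1)$ in the Skorohod topology. The main technical obstacle is the moment expansion above: the gamma-function ratios must be tracked with enough precision to see the $b$-cancellation producing the clean drift and the universal constant $8/\beta$ in the diffusion coefficient.
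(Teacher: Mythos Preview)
Your proposal is correct and follows essentially the same approach as the paper: pair consecutive steps to form $w_k^{(n)}=Y_{2k}Y_{2k+1}-1$, verify the hypotheses of Proposition~\ref{prop:as1check} via the $\mathrm{B}'$ moment formulas to obtain the drift $a_2(t)=\frac{4/\beta-(2a+1)}{1-t}$ and diffusion $\sigma_2^2(t)=\frac{8}{\beta(1-t)}$, and then bridge the paired chain and the full chain using moment bounds on $Y_k-1$. The only cosmetic differences are that the paper phrases the bridging step as $|y_1^{(2n)}/y^{(2n)}-1|\to 0$ in sup-norm in probability (via fourth moment bounds) rather than your union-bound argument on $\max_k|Y_k-1|$, and it leaves the identification of the limiting SDE with $\tl y$ implicit where you spell it out via It\^o's formula.
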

\begin{proof}
	We first consider the multiplicative random walk with step size $2$ and define $y_1^{(2n)}(t):=\prod_{k=0}^{2\lfloor nt\rfloor-1}\frac{1+\gamma^{(2n)}_{k}}{1-\gamma^{(2n)}_{k}}$. 
	We will check  the conditions in Proposition \ref{prop:as1check} for $y_1^{(2n)}(t)$ (with $x_1^{(2n)}=0$).

If $\gamma\sim \mathrm{\tl B}(s_1,s_2)$ then $\frac{1+\gamma}{1-\gamma} \sim \mathrm{B}'(s_2,s_1)$. Using the moment formulas of Fact \ref{fact:moments} one readily checks that with 
\[
v_k^{(2n)}=0, \qquad w_k^{(2n)}=\tfrac{1+\gamma^{(2n)}_{2k}}{1-\gamma^{(2n)}_{2k}}\cdot \tfrac{1+\gamma^{(2n)}_{2k+1}}{1-\gamma^{(2n)}_{2k+1}}-1
\]
the conditions  \eqref{eq:as1check1} and \eqref{eq:as1check2} of Proposition \ref{prop:as1check} are satisfied with $a_1=\sigma_{1}^2=0$, $a_2(t)=\frac{4/\beta-(2a+1)}{(1-t)}$ and $\sigma_{2}^2(t)=\frac{8}{\beta(1-t)}$. Hence the limit in distribution of $y_1^{(2n)}(\cdot)$ exist and it has the distribution of the strong solution of the diffusion
\[
d\tl y=\tfrac{4/\beta-(2a+1)}{(1-t)} \tl ydt+\sqrt{\tfrac{8}{\beta(1-t)}}\tl y dB, \qquad \tl y(0)=1,
\]
where $B$ is a standard Brownian motion. 

The solution of this SDE has the same distribution as  the process $\tl y$ in Proposition \ref{prop:hardedge}.
 Using the  the fourth moment bounds of Fact \ref{fact:moments} one can show that $|{y_1^{(2n)}}/{y^{(2n)}}-1|$ converges to 0 in the sup-norm in probability on any compact subset of $[0,1)$. From this it follows that that $y^{(2n)}$ converges to $\tl y$ in distribution as well, proving the proposition. 
\end{proof}

\section{Proofs of the operator limit theorems} \label{s:oplim}

We are ready to prove Theorem \ref{thm:CJ_op_limit}. We will do that by applying Corollary \ref{cor:randomDirac} to the processes described in  Propositions \ref{prop:cjpath}, for this we need 
to prove the path bounds \eqref{eq:xyfluc_bound} and \eqref{eq:xyfluc_bound_x}. This is the content of Propositions \ref{prop:y_cj} and \ref{prop:xtight} below.

\begin{proposition}\label{prop:y_cj}
Fix $\beta>0$, $\delta\in\CC$ with $\Re\delta>-1/2$. Let $x^{(n)}_{k}+i y^{(n)}_{k}, 0\le k\le n$ be  defined as in Proposition \ref{prop:cjpath}. Then for any $0<\eps<c_\delta=\frac{4}{\beta}(\Re\delta+\frac12)$, there exists a sequence of tight random variables $\kappa^{(n)}=\kappa^{(n)}(\eps)$ such that for all $0\le k\le n-1$,
\begin{align}\label{eq:ynk_bound}
(\kappa^{(n)})^{-1}(1-\tfrac{k}{n})^{c_\delta+\eps}\le y^{(n)}_{k}\le \kappa^{(n)}(1-\tfrac{k}{n})^{c_\delta-\eps}.
\end{align}
\end{proposition}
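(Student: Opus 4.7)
The plan is to deduce Proposition~\ref{prop:y_cj} from Proposition~\ref{prop:ygeneral}, applied with the factors $\xi^{(n)}_k := 1 + w^{(n)}_k$ so that $y^{(n)}_j = \prod_{k=0}^{j-1} \xi^{(n)}_k$ for $0 \le j \le n-1$. By Fact~\ref{fact:factor} together with \eqref{eq:w_dist}, the $\xi^{(n)}_k$ are independent for fixed $n$ and satisfy $\xi^{(n)}_k \sim \mathrm{B}'(s_k,\, s_k + 2\Re\delta + 1)$ with $s_k := \tfrac{\beta}{2}(n-k-1)$. Since we only need to bound $y^{(n)}_k$ for $0 \le k \le n-1$, the indices appearing in the product satisfy $k \le n-2$, hence $s_k \ge \tfrac{\beta}{2}$ is bounded below uniformly in $n$; in particular, the degenerate final factor $w^{(n)}_{n-1}=-1$ (coming from $\Theta(1,\delta)$ living on $\{|z|=1\}$) never enters.

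The core computation will be the log-MGF expansion required by \eqref{eq:mgfA_n}. Fix a small $\lambda_0 < \tfrac{\beta}{4}$ so that the $\mathrm{B}'$-moment formula in Fact~\ref{fact:moments} applies uniformly; then for $|\lambda| < \lambda_0$,
\begin{align*}
\log\mathbb{E}\bigl[(\xi^{(n)}_k)^\lambda\bigr] = \bigl[\log\Gamma(s_k+\lambda) - \log\Gamma(s_k)\bigr] + \bigl[\log\Gamma(s_k+2\Re\delta+1-\lambda) - \log\Gamma(s_k+2\Re\delta+1)\bigr].
\end{align*}
Using the Taylor expansion $\log\Gamma(s+h) - \log\Gamma(s) = h\psi(s) + \tfrac{h^2}{2}\psi'(s) + O(|h|^3/s^2)$ together with the classical digamma asymptotics $\psi(s+c) - \psi(s) = c/s + O(1/s^2)$ and $\psi'(s) = 1/s + O(1/s^2)$ (valid uniformly for $s \ge \tfrac{\beta}{2}$ and $|\lambda| \le \lambda_0$), this reduces to
\begin{align*}
\log\mathbb{E}\bigl[(\xi^{(n)}_k)^\lambda\bigr] = -\lambda \cdot \frac{2\Re\delta+1}{s_k} + \lambda^2 \cdot \frac{1}{s_k} + O\bigl(s_k^{-2}\bigr).
\end{align*}

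Summing from $k=0$ to $j-1$ and using $\sum_{k=0}^{j-1} s_k^{-1} = \tfrac{2}{\beta}\sum_{m=n-j}^{n-1} m^{-1} = -\tfrac{2}{\beta}\log(1-j/n) + O(1)$ uniformly for $0 \le j \le n-1$, together with $\sum_{k=0}^{j-1} s_k^{-2} = O(1)$ (a tail of $\sum m^{-2}$), I would obtain
\begin{align*}
\log\mathbb{E}\bigl[\exp(\lambda \log y^{(n)}_j)\bigr] = c_\delta\, \lambda \log(1-\tfrac{j}{n}) - \tfrac{2}{\beta}\, \lambda^2 \log(1-\tfrac{j}{n}) + \err_n(j),
\end{align*}
with $|\err_n(j)|$ bounded uniformly in $n$ and $j$. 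This matches the hypothesis \eqref{eq:mgfA_n} of Proposition~\ref{prop:ygeneral} exactly, with $c_1 = c_\delta$ and $c_2 = 2/\beta > 0$, and its conclusion delivers the desired two-sided bound with tight $\kappa^{(n)}$. The main obstacle is making the $O(\cdot)$ errors genuinely uniform in $n$ and $j$ up to $j = n-1$; this rests entirely on the lower bound $s_k \ge \tfrac{\beta}{2}$ for all indices entering the product, which is precisely why excluding the degenerate last factor $w^{(n)}_{n-1}$ is harmless.
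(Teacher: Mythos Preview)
Your proposal is correct and follows essentially the same route as the paper: both express $y^{(n)}_j$ as a product of independent $\mathrm{B}'$ variables, use Fact~\ref{fact:moments} to write $\log\mathbb{E}[(y^{(n)}_j)^\lambda]$ as a Gamma-function ratio, expand to verify hypothesis~\eqref{eq:mgfA_n} of Proposition~\ref{prop:ygeneral} with $c_1=c_\delta$ and $c_2=2/\beta$, and conclude. The only cosmetic difference is that the paper invokes the Stirling expansion of $\log\Gamma$ directly, while you go through the digamma/trigamma asymptotics; these are equivalent, and your observation that the relevant indices satisfy $s_k\ge\beta/2$ is exactly what makes the error terms uniform.
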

\begin{proof}
Using the definition of $y^{(n)}_k$ together with Fact \ref{fact:factor} we get that 
\[
y^{(n)}_k=\prod_{j=0}^{k-1} (1+w^{(n)}_{k}),
\]
where for a fixed $n$ the random variables $w^{(n)}_{k}, 0\le k\le n-1$ are independent with distribution given in \eqref{eq:w_dist}.
 By Fact \ref{fact:moments}, for $|\lambda|<\Re \delta+1/2$  and $0\le k\le n-1$ we have 
\[
	\log E[(y^{(n)}_{k})^\lambda] = \sum_{j=0}^{k-1}\log \left(\tfrac{\Gamma(s^{(n)}_{j}+\lambda)\Gamma(t^{(n)}_{j}-\lambda)}{\Gamma(s^{(n)}_{j})\Gamma(t^{(n)}_{j})}\right),
\]
where $s^{(n)}_{j}=\frac{\beta}{2}(n-j-1)$, $t^{(n)}_{j}=\frac{\beta}{2}(n-j-1)+2\Re\delta+1$. By the asymptotics of the Gamma function  for any $r>0$ there is a $c_r>0$ so that 
\[
\left|\log \Gamma(x)-\left((x-\tfrac12)\log x +x-\tfrac{\log 2\pi}{2}-\tfrac{1}{12}x^{-1}\right)\right|\le c_r x^{-2} \quad \text{for $x\ge r$.}
\]
From this (and some basic Taylor expansion estimates) it follows that $y^{(n)}_k$ satisfies condition \eqref{eq:mgfA_n} of  Proposition \ref{prop:ygeneral} with $c_1=c_\delta$ and $c_2=\frac{2}{\beta}$, and the statement follows by  Proposition \ref{prop:ygeneral}.
\end{proof}

\begin{proposition}\label{prop:xtight}
Fix $\beta>0$, $\delta\in\CC$ with $\Re\delta>-1/2$. Let $x^{(n)}_{k}+i y^{(n)}_{k}, 0\le k\le n$ be  defined as in Proposition \ref{prop:cjpath}.
	Then for any $0<c'<c_\delta=\frac{4}{\beta}(\Re\delta+\frac12)$, there exist tight random constants $\kappa^{(n)}_1>0$ such that 
	\begin{align}\label{ineq:xtight}
	|x^{(n)}_{n}-x^{(n)}_{j}|\leq \kappa^{(n)}_1(1-\frac{j}{n})^{c'}\quad \text{ for all $0\leq j\leq n-1$.}
	\end{align}
\end{proposition}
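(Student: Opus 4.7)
The plan is to expand $x^{(n)}_n - x^{(n)}_j = \sum_{k=j}^{n-1} v^{(n)}_k y^{(n)}_k$ and to exploit the factorization from Fact \ref{fact:factor}: setting $z^{(n)}_k := v^{(n)}_k/(2+w^{(n)}_k)$, the random variable $z^{(n)}_k$ has the $P_{IV}(\tfrac{\beta}{2}(n-k-1)+\Re\delta+1,-2\Im\delta)$ distribution and is independent of $w^{(n)}_k$, while the whole family $\{z^{(n)}_k\}_k$ is independent of $\mathcal{W}:=\sigma(w^{(n)}_l, 0\le l\le n-1)$ by independence of the $\gamma^{(n)}_k$'s. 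Using $y^{(n)}_k+y^{(n)}_{k+1}=y^{(n)}_k(2+w^{(n)}_k)=:Y^{(n)}_k$, we rewrite
\begin{align*}
x^{(n)}_n - x^{(n)}_j = \sum_{k=j}^{n-1} Y^{(n)}_k z^{(n)}_k,
\end{align*}
where, conditional on $\mathcal{W}$, the $Y^{(n)}_k$'s are constants and the $z^{(n)}_k$'s are still independent with their original marginal laws. Fix $\eps>0$ with $c'<c_\delta-\eps$, and choose $K_\delta$ large enough that the parameter $m_k=\tfrac{\beta}{2}(n-k-1)+\Re\delta+1$ satisfies $m_k\ge 5/2$ for $k\le n-K_\delta$ (so the moment formulas in Fact \ref{fact:moments} apply). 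Split the sum at $k=n-K_\delta$ into a \emph{main} part and a \emph{boundary} part of at most $K_\delta$ terms.

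For the main part, decompose $z^{(n)}_k=\mu^{(n)}_k+\tilde z^{(n)}_k$ with $\mu^{(n)}_k:=\ev[z^{(n)}_k]$, giving $|\mu^{(n)}_k|\le C/(n-k)$ and $\Var(z^{(n)}_k)\le C/(n-k)$ by Fact \ref{fact:moments}. Using the upper bound $Y^{(n)}_k\le 2\kappa^{(n)}(1-k/n)^{c_\delta-\eps}$ from Proposition \ref{prop:y_cj} together with the elementary estimate $\sum_{k=j}^{n-1}(1-k/n)^{c_\delta-\eps}/(n-k)\le C(1-j/n)^{c_\delta-\eps}$, the drift $D^{(n)}_j:=\sum_k Y^{(n)}_k\mu^{(n)}_k$ satisfies $|D^{(n)}_j|\le C\kappa^{(n)}(1-j/n)^{c'}$. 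For the martingale part $M^{(n)}_j:=\sum_k Y^{(n)}_k\tilde z^{(n)}_k$, the process $l\mapsto M^{(n)}_{n-l}$ is a martingale in $l$ under the conditional law given $\mathcal{W}$. Apply Doob's $L^2$ maximal inequality on the dyadic block $I_m:=\{j:2^m\le n-j<2^{m+1}\}$ to obtain
\begin{align*}
\ev\!\left[\sup_{j\in I_m}|M^{(n)}_j|^2 \,\Big|\, \mathcal{W}\right]\le 4\!\!\sum_{k=n-2^{m+1}}^{n-1}\!(Y^{(n)}_k)^2\Var(z^{(n)}_k)\le C(\kappa^{(n)})^2(2^m/n)^{2(c_\delta-\eps)}.
\end{align*}
Chebyshev yields $P(\sup_{j\in I_m}|M^{(n)}_j|/(1-j/n)^{c'}>A\mid\mathcal{W})\le C(\kappa^{(n)})^2(2^m/n)^{2(c_\delta-\eps-c')}/A^2$; summing over $m$ gives a convergent geometric series (since $c_\delta-\eps-c'>0$), so a union bound combined with tightness of $\kappa^{(n)}$ produces tightness of $\sup_j|M^{(n)}_j|/(1-j/n)^{c'}$.

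The boundary part is handled by a crude bound: for $k\in(n-K_\delta,n-1]$ the distribution of $z^{(n)}_k$ depends only on $n-k\in\{1,\ldots,K_\delta-1\}$ and not on $n$, so $\{|z^{(n)}_k|\}$ is tight. Each term satisfies $|Y^{(n)}_k z^{(n)}_k|\le 2\kappa^{(n)}(K_\delta/n)^{c_\delta-\eps}|z^{(n)}_k|$; dividing by $(1-j/n)^{c'}\ge n^{-c'}$ and using $c'<c_\delta-\eps$ yields a tight contribution. Summing the tight bounds on drift, martingale, and boundary yields the required $\kappa^{(n)}_1$. The main obstacle is precisely the boundary: for $k$ near $n-1$ the $P_{IV}$ parameter $m_k$ can approach $1/2$, so $z^{(n)}_k$ need not even have finite variance and the second-moment approach used for the main part breaks down; isolating the last $O(1)$ terms and handling them via tightness of their fixed distributions cleanly bypasses this issue.
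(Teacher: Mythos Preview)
Your proof is correct and follows essentially the same approach as the paper's: the same rewriting $x^{(n)}_n-x^{(n)}_j=\sum_{k=j}^{n-1}z^{(n)}_k(y^{(n)}_k+y^{(n)}_{k+1})$ using Fact~\ref{fact:factor}, the same boundary/main split (isolating the last $O(1)$ terms whose fixed distributions need not have enough moments), and the same drift/martingale decomposition of the main part, with the martingale controlled via a block decomposition, Doob's maximal inequality, and Chebyshev. The only cosmetic differences are that you use dyadic blocks $\{2^m\le n-j<2^{m+1}\}$ where the paper uses super-geometric blocks $\sigma_i=\lfloor n(1-e^{-\theta^i})\rfloor$, and that your phrase ``a union bound combined with tightness of $\kappa^{(n)}$'' compresses the paper's explicit intersection with $\{\kappa^{(n)}\le\sqrt{K}\}$---both variants work for the same reason.
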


\begin{proof}
Fix $\eps>0$ so that $c'+2\eps<c_\delta$. By Proposition \ref{prop:y_cj} there is a sequence of tight random variables $\kappa^{(n)}$ so that \eqref{eq:ynk_bound} holds, and the sequence $\kappa^{(n)}$ is measurable with respect to the sigma-field generated by the random variables $y^{(n)}_k, 0\le k\le n-1$.     

Set $z^{(n)}_{k}=v^{(n)}_{k}/(2+w^{(n)}_{k})$. Then from \eqref{xyrec}  we get
\[
x^{(n)}_{k+1}=x^{(n)}_{k}+z^{(n)}_k(2+w^{(n)}_{k})y^{(n)}_{k}=x^{(n)}_{k}+z^{(n)}_{k}(y^{(n)}_{k+1}+y^{(n)}_{k}),
\]
and
\[
x^{(n)}_{n}-x^{(n)}_{j}=\sum_{k=j}^{n-1}z^{(n)}_{k}(y^{(n)}_{k}+y^{(n)}_{k+1}).
\]
Introduce 
\[
A^{(n)}:=\max_{0\leq j\leq n-1} \left|\sum_{k=j}^{n-1}z^{(n)}_{k}(y^{(n)}_{k}+y^{(n)}_{k+1})\right|(1-\frac{j}{n})^{-c'},
\]
the statement will follow once we show that the sequence $A^{(n)}, n\ge 1$ is tight. We will do that by first separating finitely many terms in the maximum, and then splitting the sum using centered versions of $z^{(n)}_{k}$.

Set $N_{\delta}=\lceil\frac2\beta(4-\Re\delta)\rceil\vee 0$ and $\tl n=n-N_\delta-1$. Note that by Fact \ref{fact:moments}, the fourth moment of $z^{(n)}_{k}$ is finite for $j\le \tl n$. 
By \eqref{eq:z_dist} the distribution of $z^{(n)}_{k}$ only depends on $n-k$, hence the path bounds \eqref{eq:ynk_bound} on $y^{(n)}_k$ (together with $c_\delta-2\eps-c'>0$) imply that the following sequence of random variables is tight:
\begin{align}\label{eq:An_0}
A^{(n)}_{0}:=\max_{\tl n+1\leq j\leq n-1} \left|\sum_{k=j}^{n-1}z^{(n)}_{k}(y^{(n)}_{k}+y^{(n)}_{k+1})\right|(1-\frac{j}{n})^{-c'}.
\end{align}
Since the sequence $A^{(n)}_{0}, n\ge 1$ is tight, it suffices to show the tightness of the following sequence:
\begin{align}
\tl A^{(n)}:= \max_{0\leq j\leq \tl n } \left|\sum_{k=j}^{\tl n}z^{(n)}_{k}(y^{(n)}_{k}+y^{(n)}_{k+1})\right|(1-\frac{j}{n})^{-c'}.\end{align}
We introduce 
\begin{align*}
    A^{(n)}_{1}&=\max_{0\leq j\leq \tl n } \left|\sum_{k=j}^{\tl n} \ev[z^{(n)}_{k}](y^{(n)}_{k}+y^{(n)}_{k+1})\right|(1-\frac{j}{n})^{-c'},
\\
    A^{(n)}_{2}&=\max_{0\leq j\leq \tl n } \left|\sum_{k=j}^{\tl n} \bar z^{(n)}_{k}(y^{(n)}_{k}+y^{(n)}_{k+1})\right|(1-\frac{j}{n})^{-c'},
\end{align*}
where $\ol{X}=X-E[X]$. Note that $\tl A^{(n)}\le A^{(n)}_{1}+A^{(n)}_{2}$.

By \eqref{eq:z_dist} and Fact \ref{fact:moments} we have 
\[
\ev[z^{(n)}_{k}]=\frac{2\Im \delta}{\beta(n-k-1)+2\Re \delta}.
\]
Using the bounds in  \eqref{eq:ynk_bound} with $\eps<c_{\delta}-c'$ we get 
\begin{align}
    A^{(n)}_{1}&\le 
    \max_{0\leq j\leq \tl n }\left\{ (1-\frac{j}{n})^{-c'} \left(\sum_{k=j}^{\tl n} 4\kappa^{(n)} (1-\frac{k}{n})^{c_\delta-\eps} \frac{|\Im\delta|}{\beta(n-k-1)+2\Re\delta}\right)\right\}
    \le c \kappa^{(n)},
\end{align}
 with a deterministic constant $c$ that only depends on $\delta$ and $\beta$. This shows that the sequence $A^{(n)}_{1}, n\ge 1$ is tight.

Next we turn to the tightness of the sequence $A^{(n)}_{2}$.
Choose $1<\theta<(c_\delta-\tfrac32\eps)/c'$. Define \begin{align*}m&=m^{(n)}=\inf\{i\in\Z^+: \theta^i \geq \log(\tfrac{n}{N_\delta+1})\},\\
\sigma_0&=\sigma^{(n)}_0=0, \qquad \sigma_i=\sigma^{(n)}_i=\min(\lfloor n(1-e^{-\theta^i})\rfloor,\tl n) \quad \text{for $1\leq i\leq m$. }
\end{align*}
Note that $\sigma_0=0\le \sigma_1 \le \dots \le \sigma_m=\tl n$.
In order to bound the tail of $A^{(n)}_{2}$ we will  split the index set of the sums into blocks $\{\sigma_i,\sigma_i+1,\cdots,\sigma_{i+1}\}$ to control the term $(1-j/n)^{-c'}$, and then control the fluctuations within each block. Fix $K>0$, then we have
\begin{align}\label{eq:blocks}
	P(A^{(n)}_{2}\geq K)\le &
	\sum_{i=0}^{m-1} P\left(\max_{\sigma_i\le j\le \sigma_{i+1}}|\sum_{k=j}^{\tl n}\bar{z}^{(n)}_{k}(y^{(n)}_{k}+y^{(n)}_{k+1})|(1-\frac{j}{n})^{-c'}\ge K, \kappa^{(n)}\le \sqrt{K}\right)\\
	&+P(\kappa^{(n)}>\sqrt{K}).\notag
\end{align}
Since $\kappa^{(n)}$ are tight, we have 
\[
\lim_{K\to\infty}\limsup_{n\to\infty} P(\kappa^{(n)}>\sqrt{K})=0.
\]
We now estimate the terms in the sum in \eqref{eq:blocks} for each  $0\le i\le m-1$.  We have
\begin{align*}
&P\left(\max_{\sigma_i\le j\le \sigma_{i+1}}|\sum_{k=j}^{\tl n}\bar{z}^{(n)}_{k}(y^{(n)}_{k}+y^{(n)}_{k+1})|(1-\frac{j}{n})^{-c'}\ge K, \kappa^{(n)}\le \sqrt{K}\right)\\
&\qquad \qquad \leq P\big( |\sum_{k=\sigma_i}^{\tl n}\bar{z}^{(n)}_{n}(y^{(n)}_{k}+y^{(n)}_{k+1})|\ge \frac{K}{2}(1-\frac{\sigma_{i+1}}{n})^{c'}, \kappa^{(n)}\le \sqrt{K}\big)\\ &\qquad \qquad\quad +P\big(\max_{\sigma_i\leq j\leq \sigma_{i+1}}|\sum_{k=\sigma_i}^{j}\bar{z}^{(n)}_{k}(y^{(n)}_{k}+y^{(n)}_{k+1})|\geq\frac{K}{2}(1-\frac{\sigma_{i+1}}{n})^{c'},\kappa^{(n)}\le \sqrt{K}\big).
\end{align*}
Note that the sequence $\kappa^{(n)}$ is measurable with respect to $y^{(n)}_{k}, 0\le k\le n$ and $\bar z^{(n)}_{k}$ are independent of $y^{(n)}_{k}$. Hence by conditioning on $y^{(n)}_{k}, 0\le k\le n$, using Doob's maximal inequality, and the path bound \eqref{eq:ynk_bound}  we get
\begin{align*}
   & P\big(\max_{\sigma_i\leq j\leq \sigma_{i+1}}|\sum_{k=\sigma_i}^{j}\bar{z}^{(n)}_{k}(y^{(n)}_{k}+y^{(n)}_{k+1})|\geq\frac{K}{2}(1-\frac{\sigma_{i+1}}{n})^{c'},\kappa^{(n)}\le \sqrt{K}\big)\\
   &\qquad\qquad\qquad\qquad \le \ev\left[\ind(\kappa^{(n)}\le \sqrt{K}) \sum_{k=\sigma_i}^{\sigma_{i+1}}
    \frac{4 \ev[ (\bar z^{(n)}_{k})^2] (y^{(n)}_{k}+y^{(n)}_{k+1})^2}{K^2 (1-\sigma_{i+1}/n)^{2c'}}
    \right]\\
    &\qquad\qquad\qquad\qquad \le 
    \ev\left[\ind(\kappa^{(n)}\le \sqrt{K}) \sum_{k=\sigma_i}^{\sigma_{i+1}}
    \frac{16 (\kappa^{(n)})^2 \ev[ (\bar z^{(n)}_{k})^2] (1-k/n)^{2(c_\delta-\eps)}}{K^2 (1-\sigma_{i+1}/n)^{2c'}}
    \right]\\
    &\qquad\qquad\qquad\qquad \le  \sum_{k=\sigma_i}^{\sigma_{i+1}}
    \frac{16 \ev[ (\bar z^{(n)}_{k})^2] (1-k/n)^{2(c_\delta-\eps)}}{K (1-\sigma_{i+1}/n)^{2c'}}.
\end{align*}
Using \eqref{eq:z_dist} and Fact \ref{fact:moments} one can show that there exists an absolute constant $c$ such that 
\begin{align*}
    \sum_{k=\sigma_i}^{\sigma_{i+1}}
    \frac{16 \ev[(\bar z^{(n)}_{k})^2] (1-k/n)^{2(c_\delta-\eps)}}{K (1-\sigma_{i+1}/n)^{2c'}}&\le  cK^{-1}(1-\frac{\sigma_{i+1}}{n})^{-2c'}(1-\frac{\sigma_i}{n})^{2(c_\delta-\eps)}
    \le cK^{-1}e^{-2\theta^i(c_\delta-\eps-c'\theta)}\\
    &\le c K^{-1} e^{-\eps \theta^i}.
\end{align*}
Similarly,  Chebishev's inequality, conditioning, and the path bound \eqref{eq:ynk_bound}  give the upper bound
\[
P\big( |\sum_{k=\sigma_i}^{\tl n}\bar{z}^{(n)}_{k}(y^{(n)}_{k}+y^{(n)}_{k+1})|\ge \frac{K}{2}(1-\frac{\sigma_{i+1}}{n})^{c'}, \kappa^{(n)}\le \sqrt{K}\big)\le c K^{-1} e^{-\eps \theta^i}.
\]
This shows that the sum on the right of \eqref{eq:blocks} can be bounded from above by 
\[
2 \sum_{i=0}^m c K^{-1} e^{-\eps \theta^i}\le c_1 K^{-1}
\]
with an absolute constant $c_1$. This proves the tightness of the sequence $A^{(n)}_{2}, n\ge 1$, and completes the proof of the proposition. 
\end{proof}
Now we have all the pieces for the proof of Theorem \ref{thm:CJ_op_limit}.
\begin{proof}[Proof of Theorem \ref{thm:CJ_op_limit}]
Consider the random variables $x^{(n)}_{k}+i y^{(n)}_{k}, 0\le k\le n$ defined in Proposition \ref{prop:cjpath}, and define $(x^{(n)}(t), y^{(n)}(t)):=(x^{(n)}_{\lfloor nt\rfloor},y^{(n)}_{\lfloor nt\rfloor})$. Let $\tl x+i \tl y$ be the process  defined  in Proposition \ref{prop:hpoperator}. 
Set $q^{(n)}=x^{(n)}_n$ and $q=\lim\limits_{t\to 1} \tl x(t)$. Define $\tau^{(n)}, n\in \ZZ_+$ using $(x^{(n)}+i y^{(n)}, q^{(n)})$, and $\tau^{(\infty)}$ using  $(\tl x+i \tl y,q)$. Then $\tau^{(n)}\sim \CJop$ and $\tau^{(\infty)}\sim  \Huaop$.

By Propositions \ref{prop:y_cj} and \ref{prop:xtight} there exists a tight sequence $\kappa^{(n)}, n\in \ZZ_+$ so that the inequalities \eqref{eq:xyfluc_bound} and \eqref{eq:xyfluc_bound_x} are satisfied for $n\in \ZZ_+$ with $c_1=c_\delta-\eps, c_2=c_\delta+\eps$, $c_3=c_\delta-\eps$. Here $c_\delta=\frac{4}{\beta}(\Re \delta+1/2)$ and $\eps\in (0, \min(c_\delta, \frac12))$ is arbitrary. By \eqref{ineq:hpsde} there is a finite random variable $\kappa^{(\infty)}$ so that 
\eqref{eq:xyfluc_bound} and \eqref{eq:xyfluc_bound_x} are satisfied for $\tl x+i \tl y$ with the just defined $c_1, c_2, c_3$. Together with Proposition \ref{prop:cjpath} this means that the conditions of Corollary \ref{cor:randomDirac} are satisfied, and hence the statement of the theorem follows. 
\end{proof}

The proof of Theorem \ref{thm:RO_op_limit} follows along the same line.

\begin{proposition}\label{prop:y_orth}
Fix $\beta>0$, $a,b>-1$. Let $y^{(2n)}_k, 0\le k\le 2n$ be  defined as in Proposition \ref{prop:orthpath}. Then for any $\eps>0$ small, there exists a sequence of tight random variables $\kappa^{(2n)}=\kappa^{(2n)}(\eps)$ such that for all $0\le k\le 2n-1$,
\[
(\kappa^{(2n)})^{-1}(1-\tfrac{k}{2n})^{2a+1+\eps}\le y^{(2n)}_{k}\le \kappa^{(2n)}(1-\tfrac{k}{2n})^{2a+1-\eps}.
\]
\end{proposition}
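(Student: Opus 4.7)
The plan is to apply Proposition~\ref{prop:ygeneral}, with the role of $n$ played by $2n$, to the product $y^{(2n)}_k = \prod_{j=0}^{k-1} \xi^{(2n)}_j$ where $\xi^{(2n)}_j = (1+\gamma^{(2n)}_j)/(1-\gamma^{(2n)}_j)$. The argument closely parallels the proof of Proposition~\ref{prop:y_cj}; the only real work is to compute the cumulant generating function of $\log y^{(2n)}_k$ and identify the constants, which will turn out to be $c_1 = 2a+1$ and $c_2 = 4/\beta$.

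A direct change of variables shows that $Y = (1+\gamma)/(1-\gamma) \sim \mathrm{B}'(s_2, s_1)$ when $\gamma \sim \tl{\mathrm{B}}(s_1, s_2)$. Applied to the definitions in Proposition~\ref{prop:orthpath}, this gives, for even index $j=2i$, $\xi^{(2n)}_{2i} \sim \mathrm{B}'(\tfrac{\beta}{2}(n-i+b), \tfrac{\beta}{2}(n-i+a))$, and for odd index $j=2i+1$, $\xi^{(2n)}_{2i+1} \sim \mathrm{B}'(\tfrac{\beta}{2}(n-i-1), \tfrac{\beta}{2}(n-i+a+b))$. Since $a, b > -1$ and the indices used satisfy $j \le k-1 \le 2n-2$, every Beta-prime parameter stays bounded below by a positive constant $c_0 = c_0(a, b, \beta)$.

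By Fact~\ref{fact:moments} and the asymptotic expansion of $\log \Gamma$, each factor contributes
\begin{align*}
\log \ev[(\xi^{(2n)}_j)^\lambda] = \lambda \log(s_j/t_j) + \tfrac{\lambda^2}{2}\big(\tfrac{1}{s_j}+\tfrac{1}{t_j}\big) + O\big((s_j \wedge t_j)^{-2}\big),
\end{align*}
uniformly in $j, n$ for $|\lambda| < \lambda_0$ small. Combining the two members of a complete pair $(2i, 2i+1)$ and expanding in $1/(n-i)$,
\begin{align*}
\log \frac{(n-i+b)(n-i-1)}{(n-i+a)(n-i+a+b)} = -\frac{2a+1}{n-i} + O\Big(\frac{1}{(n-i)^2}\Big), \quad \sum_{\text{pair}} \Big(\tfrac{1}{s}+\tfrac{1}{t}\Big) = \frac{8}{\beta(n-i)} + O\Big(\frac{1}{(n-i)^2}\Big);
\end{align*}
note in particular that $b$ cancels from the leading drift, in line with the fact that the limiting operator's scaling exponent $2a+1$ involves only $a$. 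Summing over $i = 0, \ldots, \lfloor k/2 \rfloor - 1$, the $O((n-i)^{-2})$ errors aggregate to a uniformly bounded remainder, and the main sums equal $(2a+1) \log(1 - k/(2n))$ and $-(4/\beta) \log(1 - k/(2n))$ up to a bounded error. When $k$ is odd, the single unpaired even factor at index $2\lfloor k/2 \rfloor$ contributes only a bounded term because its parameters remain bounded below by $c_0$. Hence condition~\eqref{eq:mgfA_n} of Proposition~\ref{prop:ygeneral} is verified with $c_1 = 2a+1$ and $c_2 = 4/\beta$, and that proposition immediately yields the claimed bound.

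The main potential obstacle is preserving uniformity of the error as $k \to 2n-1$: the Stirling expansion degrades for factors with $n - i = O(1)$. This is averted by the observation that the index restrictions plus $a, b > -1$ force $s_j, t_j \ge c_0 > 0$ throughout, so both the total Stirling remainder and the unpaired boundary factor contribute only $O(1)$ and can be absorbed into $\err_{2n}(k)$.
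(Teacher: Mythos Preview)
Your proof is correct and follows essentially the same approach as the paper: apply Proposition~\ref{prop:ygeneral} with $c_1=2a+1$ and $c_2=4/\beta$, mirroring the proof of Proposition~\ref{prop:y_cj}. The paper's own proof simply states those constants and instructs the reader to mimic the circular Jacobi argument; your version fills in the pairing computation and the handling of the boundary factor, which is exactly what that mimicry entails.
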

\begin{proof}
One can just mimic the steps of the  proof of Proposition \ref{prop:y_cj} using the parameters 
\begin{align*}
(s^{(2n)}_{k},t^{(2n)}_{k}) =\begin{cases}
\big(\tfrac{\beta}{4}(2n-k+2a),\tfrac{\beta}{4}(2n-k+2b)\big) &\mbox{if $k$ is even,}\\
\big(\tfrac{\beta}{4}(2n-k+2a+2b+1), \tfrac{\beta}{4}(2n-k-1)\big) &\mbox{if $k$ is odd},
\end{cases}
\end{align*}
 and $c_1=2a+1$, $c_2=\frac{4}{\beta}$. 
\end{proof}

\begin{proof}[Proof of Theorem \ref{thm:RO_op_limit}] 
    Consider the random variables $ y^{(2n)}_{k}, 0\le k\le n$ defined in Proposition \ref{prop:orthpath}, and define $(x^{(2n)}(t), y^{(2n)}(t)):=(0,y^{(2n)}_{\lfloor 2nt\rfloor})$. Let $\tl y$ be the process  defined  in Proposition \ref{prop:hardedge} and set $\tl x=0$. 
Set $q^{(2n)}=q=0$, and define $\tau^{(2n)}, n\in \ZZ_+$ using $(x^{(2n)}+i y^{(2n)}, q^{(2n)})$, and $\tau^{(\infty)}$ using  $(\tl x+i \tl y,q)$. Then $\tau^{(2n)}\sim \ROop$ and $\tau^{(\infty)}\sim  \Bessop$.

By Propositions \ref{prop:y_orth}  there exists a tight sequence $\kappa^{(2n)}, n\in \ZZ_+$ so that the  inequalities \eqref{eq:xyfluc_bound} and \eqref{eq:xyfluc_bound_x} are satisfied for $n\in \ZZ_+$ with $c_1=2a+1-\eps, c_2=2a+1+\eps$, $c_3=\max(c_1, 1)$. (Note that since $x^{(2n)}=q^{(2n)}=0$ the inequality \eqref{eq:xyfluc_bound_x} holds for  any positive $c_3$.) Here  $\eps\in(0,\frac12)$ is chosen so that $c_1>-1$.  By the sublinearity of  Brownian motion there is a finite random variable $\kappa^{(\infty)}$ so that 
\eqref{eq:xyfluc_bound} and \eqref{eq:xyfluc_bound_x} are satisfied for $\tl x+i \tl y$ with the just defined $c_1, c_2, c_3$. Together with Proposition \ref{prop:orthpath} this means that the conditions of Corollary \ref{cor:randomDirac} are satisfied, and hence the statement of the theorem follows.
\end{proof}

\section{Proofs of the theorems related to the limiting operators}\label{s:limitingobjects}

In this section we provide the proofs 
for
our results on the properties and characterizations of the limiting point processes and random analytic functions arising from the {circular Jacobi $\beta$-ensemble} and the  {real orthogonal $\beta$-ensemble} (Theorems \ref{thm:KSsde}, \ref{thm:HP_2piZ}, \ref{thm:zetaHP} and \ref{thm:zetaB}).

\subsection{Simple transformations of Dirac operators}\label{subs:reverse}

For some of our results it will be more convenient to consider Dirac operators that live on $(0,1]$, with a potential limit point at $0$. (In  fact this is the framework used in  \cite{BVBV_szeta}.) In order to do this, the framework introduced in Section \ref{subs:Diracop} has to be extended to also include the following setup (we call this the \emph{reversed framework}):
\\
a) Both the generating path $x+i y$ and the weight function $R$ (defined via \eqref{eq:R}) are defined on $(0,1]$.  The operator $\tau$ in \eqref{def:Dirop} acts on $(0,1]\to \R^2$ functions.\\
b)  In Assumption \ref{assumption:1} the first integral condition is replaced with $\int_0^1 \| R(s) \uu_0\| ds<\infty$.

Otherwise we have the same assumptions: $x+i y$ is measurable and locally bounded on its domain, the boundary conditions $\uu_0, \uu_1$ satisfy \eqref{eq:u_assumption}. Then $\tau$ is self-adjoint on the domain $\dom (\tau)$ given by \eqref{Dir:domain}, its inverse is a Hilbert-Schmidt integral operator with the kernel given in \eqref{eq:HS_kernel}. The operator $\res \tau$, the integral trace $\mathfrak{t}_\tau$, and the secular function $\zeta_\tau$ can be defined the same way as before (see Section \ref{subs:Diracop}).







There is a simple way to move between the two frameworks. Introduce the time reversal  operator $\rho f(t):=f(1-t)$ acting on functions defined on $[0,1)$ or $(0,1]$. Let $\iota: \HH\to \HH$ be defined as the reflection $x+i y\to -x +i y$, and set
\[
S=\mat{1}{0}{0}{-1}.
\]
If a weight function $R$ is generated by the path $z=x+i y$, then $SRS$ is the weight function corresponding to the path $\iota z$.

The statements of the following two lemmas are contained in Lemma 36 of \cite{BVBV_szeta}.
\begin{lemma}[\cite{BVBV_szeta}]\label{lem:timerev}
	Assume that the Dirac operator $\tau=\Dirop(R,\uu_0, \uu_1)$ satisfies the assumptions \eqref{eq:u_assumption} and \eqref{eq:int_assumption} with boundary conditions $\uu_0,\uu_1$, weight function $R$, and generating path $z=x+i y$.  Then the operator $\rho^{-1}S \tau S \rho$ satisfies the assumptions of the reversed framework with boundary conditions $-\uu_1, -\uu_0$, weight function $\rho SRS$, and generating path $\iota \rho z$.
	The operators $\tau$ and $\rho^{-1}S \tau S \rho$ are orthogonally equivalent  in the respective $L^2$ spaces,  they have the same integral traces and secular functions.
\end{lemma}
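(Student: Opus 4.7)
The plan is to unpack $\rho^{-1}S\tau S\rho$ by direct differentiation and matrix algebra, identify it as a Dirac operator in the reversed framework with the stated weight function and generating path, and then exhibit $S\rho$ as an isometric intertwiner that transports all of the relevant structure.

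First I would apply the chain of operations to a test function $g\colon (0,1]\to \R^2$. Since $(S\rho g)(s)=Sg(1-s)$ has derivative $-Sg'(1-s)$, and one checks in one line that $SJS=-J$, the composition collapses to
\begin{equation*}
(\rho^{-1}S\tau S\rho\, g)(t)=(SRS)(1-t)^{-1}Jg'(t)=(\rho SRS)(t)^{-1}Jg'(t),
\end{equation*}
which is the Dirac form with weight function $\rho SRS$. To match the generating path, a direct computation gives $SX(t)S=\tfrac{1}{\sqrt{y(t)}}\mat{1}{x(t)}{0}{y(t)}$, the $X$-matrix associated with $\iota z(t)=-x(t)+iy(t)$; after the time reversal this is the $X$-matrix of $\iota\rho z$, as asserted.

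Next I would handle the boundary conditions and the integrability assumptions. A function $g$ belongs to the domain of the transformed operator iff $v:=S\rho g$ lies in $\dom(\tau)$; using $v(s)^tJ\uu=g(1-s)^tSJ\uu$ together with the identity $JSJ=S$, the two conditions on $v$ at $s=0$ and $s=1$ become conditions on $g$ at $t=1$ and $t=0$ of the form $g(t)^tJ\tl{\uu}_j=0$, with $\tl{\uu}_0,\tl{\uu}_1$ parallel to $\uu_1,\uu_0$ up to the conventional signs of the reversed framework, and with the normalization $\tl{\uu}_0^tJ\tl{\uu}_1=1$ preserved. The first integrability assumption of the reversed framework reduces, after the substitution $u=1-s$ and the orthogonality of $S$, to $\int_0^1\|R(u)\uu_1\|\,du<\infty$, which is the original first inequality in Assumption \ref{assumption:1}; the double-integral condition transfers under the same substitution.

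For the orthogonal equivalence the key observation is that $S\rho$ is an isometry from $L^2_{\rho SRS}$ onto $L^2_R$, because
\begin{equation*}
\|g\|_{\rho SRS}^2=\int_0^1 g(s)^tSR(1-s)Sg(s)\,ds=\int_0^1(Sg(1-u))^tR(u)(Sg(1-u))\,du=\|S\rho g\|_R^2,
\end{equation*}
and by construction $\rho^{-1}S\tau S\rho=(S\rho)^{-1}\tau(S\rho)$. A short two-by-two matrix calculation yields $X(S\rho)\tl X^{-1}=S\rho$, so that $\res\tl\tau=(S\rho)^{-1}(\res\tau)(S\rho)$ as operators on unweighted $L^2$; both Hilbert--Schmidt norms and second regularized determinants are invariant under this unitary conjugation. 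For the integral traces, direct substitution in $\mathfrak{t}_{\tl\tau}=\int_0^1\tl{\uu}_0^t(\rho SRS)(s)\tl{\uu}_1\,ds$, combined with $S^2=I$ and the same change of variable, gives $\int_0^1\uu_0^tR(u)\uu_1\,du=\mathfrak{t}_\tau$. Equality of the secular functions then follows from the definition in \eqref{eq:zeta} and the two equalities just established. The proof contains no substantive analytic content; the main obstacle is purely bookkeeping --- tracking the interplay of the identities $SJS=-J$, $JSJ=S$ and $SXS=X_{\iota z}$, the time reversal $\rho$, the two $L^2$ spaces, and the sign/normalization convention adopted for the boundary vectors in the reversed framework.
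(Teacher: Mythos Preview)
The paper does not prove this lemma; it is quoted from Lemma~36 of \cite{BVBV_szeta}. Your direct verification by matrix algebra and change of variables is the natural approach and is essentially correct.

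There is one slip worth flagging in the boundary-condition step. From $v(s)^tJ\uu=g(1-s)^tSJ\uu$ and $SJ=-JS$ (equivalent to your $JSJ=S$) one gets $g(1-s)^tJ(-S\uu)=0$, so the transformed boundary vectors are $\tl\uu_0\parallel S\uu_1$ and $\tl\uu_1\parallel S\uu_0$, not merely parallel to $\uu_1,\uu_0$ as you wrote; in general $S\uu$ is not parallel to $\uu$. With the correct choice $\tl\uu_0=-S\uu_1$, $\tl\uu_1=-S\uu_0$ both the normalization $\tl\uu_0^tJ\tl\uu_1=\uu_1^tSJS\uu_0=-\uu_1^tJ\uu_0=1$ and your integral-trace computation go through cleanly (the four factors of $S$ collapse via $S^2=I$, exactly as you indicated). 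With your stated $\tl\uu_j$, however, the trace computation would produce $\int_0^1(S\uu_1)^tR(u)(S\uu_0)\,du$, which is not $\mathfrak t_\tau$ in general. The lemma as printed in the paper also writes $-\uu_1,-\uu_0$, so the slip may have been inherited from there; in the paper's applications the relevant boundary vectors have a zero coordinate (so $S\uu\parallel\uu$) and the distinction is invisible, but for the general statement the $S$ must be tracked.
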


\begin{lemma}[\cite{BVBV_szeta}]\label{lem:rotation}
	Let $Q$ be a $2\times 2$ orthogonal matrix with determinant 1. Let $\mathcal Q: \bar \HH\to \bar \HH$ be the corresponding linear isometry of $\bar \HH$ mapping $z\in \bar \HH$ to the ratio of the entries of $Q [z,1]^t$. Suppose that the Dirac operator $\tau$ satisfies the assumptions \eqref{eq:u_assumption} and \eqref{eq:int_assumption} with boundary conditions $\uu_0, \uu_1$ and generating path $x+i y$. Then the operator $Q \tau Q^{-1}$ also satisfies the same assumptions, with boundary conditions $\mathcal Q \uu_0, \mathcal Q \uu_1$ and generating path $\mathcal Q(x+i y)$. The two operators are orthogonally equivalent,  they have the same integral traces and secular functions. The same statement holds if $\tau$ satisfies the assumptions of the reversed framework.
\end{lemma}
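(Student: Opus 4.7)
The plan is to track how each ingredient transforms under conjugation by $Q$, using two algebraic facts: since $Q \in SO(2)$ is itself a rotation, $QJ = JQ$; and $Q^{-1} = Q^t$. Together these give $(Q\uu_0)^t J (Q\uu_1) = \uu_0^t J \uu_1 = 1$, so the normalization \eqref{eq:u_assumption} passes to the new pair $Q\uu_0, Q\uu_1$. Computing directly from \eqref{def:Dirop},
\[
Q \tau Q^{-1} g = Q R^{-1} J Q^{-1} g' = (Q R Q^t)^{-1} J g',
\]
so $Q\tau Q^{-1}$ is a Dirac operator with new weight function $\tilde R := Q R Q^t$. The substitution $v = Q^{-1}g$ in \eqref{Dir:domain} produces $v^t J \uu_i = g^t Q J \uu_i = g^t J (Q\uu_i)$, confirming that the transformed boundary conditions are $Q\uu_0, Q\uu_1$, which in the ratio-of-coordinates identification correspond to $\mathcal Q\uu_0, \mathcal Q\uu_1$.

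Next I would identify the generating path of $Q \tau Q^{-1}$. Since $\det(XQ^t) = 1$, the $QR$ decomposition produces $XQ^t = K'\tilde X$ with $K' \in SO(2)$ and
\[
\tilde X = \tfrac{1}{\sqrt{\tilde y}}\begin{pmatrix} 1 & -\tilde x \\ 0 & \tilde y \end{pmatrix}
\]
for uniquely determined $\tilde y > 0$ and $\tilde x \in \R$. Then $\tilde R = \tfrac12 (XQ^t)^t (XQ^t) = \tfrac12 \tilde X^t \tilde X$, so $\tilde R$ is indeed of the form \eqref{eq:R} with generating path $\tilde x + i \tilde y$. To identify $\tilde x + i \tilde y$ with $\mathcal Q(x+iy)$, recall that via the M\"obius action on $\HH$ the matrix $X$ sends $x+iy$ to $i$, while the rotation $K' \in SO(2)$ fixes $i$; hence
\[
\tilde x + i \tilde y = \tilde X^{-1} \cdot i = Q X^{-1} K' \cdot i = Q X^{-1} \cdot i = \mathcal Q(x+iy).
\]

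Finally, Assumption \ref{assumption:1} transfers because $\|\tilde R(s) Q\uu_i\| = \|R(s)\uu_i\|$ and $(Q\uu_0)^t \tilde R(s) (Q\uu_1) = \uu_0^t R(s) \uu_1$; the latter identity also shows equality of integral traces via \eqref{eq:int_tr}. The map $f \mapsto Qf$ is a unitary intertwiner $L^2_R \to L^2_{\tilde R}$ of $\tau$ with $Q\tau Q^{-1}$ (since $\|Qf\|^2_{\tilde R} = \int f^t R f\, ds = \|f\|^2_R$), giving orthogonal equivalence, identical spectra, and therefore equal secular functions via \eqref{eq:zeta}. The reversed-framework version is covered by the same algebra, or alternatively by combining the standard case with Lemma \ref{lem:timerev}. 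The only genuinely geometric step is the $QR$ identification of $\tilde x + i\tilde y$ with $\mathcal Q(x+iy)$; the remainder is direct verification.
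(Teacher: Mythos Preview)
Your argument is correct. The paper itself does not supply a proof of this lemma; it simply cites it as part of Lemma 36 in \cite{BVBV_szeta}. Your direct verification---using the commutation $QJ=JQ$ for $Q\in SO(2)$, the identity $\tilde R=QRQ^t$, and the QR-decomposition of $XQ^t$ to identify the new generating path as $\mathcal Q(x+iy)$---is exactly the natural route and matches the spirit of the computation in the cited reference. One small remark: in checking Assumption~\ref{assumption:1} you wrote the mixed product $(Q\uu_0)^t\tilde R(Q\uu_1)$, but the second condition in \eqref{eq:int_assumption} involves the diagonal terms $(Q\uu_i)^t\tilde R(Q\uu_i)$; the same substitution $\tilde R=QRQ^t$ handles those identically, so nothing is lost.
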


\subsection{Proofs of the theorems related to $\Huaop$}

Our first step is to produce a unitary equivalent form of the operator $\Huaop$ where the driving path is independent of the boundary conditions. In order to do that, we use the following factorization lemma for the diffusion \eqref{eq:HPsde}. This is a generalization of Proposition X.3.1 in \cite{hypBM} which treats the $\delta=0$ case, i.e.~the hyperbolic Brownian motion. 

We recall that in the Poincar\'e  half plane model of the hyperbolic plane the isometries are of the form $z\to \frac{a z+b}{c z+ d}$ with $a, b, c, d\in \R$ and $ad-bc\neq 0$. For $r\in \R$ we set
\begin{align}\label{def:hyprot}
    T_r(z)=\frac{rz+1}{r-z}.
\end{align}
$T_r$ is  the hyperbolic rotation  about the point $i$ taking $r$ to $\infty$ and $\infty$ to $-r$.

\begin{theorem}\label{thm:factor} Fix $\delta\in \CC$ with $\Re \delta>-1/2$. 
 Consider the diffusion $w=x+i y$ defined in \eqref{eq:HPsde}, and denote by $w_\infty$ the a.s.~limit as $t\to \infty$. Then the process $\tl w_t=T_{w_\infty} w_t$ satisfies the diffusion
 \begin{align}\label{eq:SDE_cond}
     d\tl w=\Im \tl w(d\tl Z+i (1+\bar \delta) dt), \qquad \tl w_0=i.
 \end{align}
 where $\tl Z$ is standard complex Brownian motion. 

Moreover, if a process $\tl w$ satisfies the SDE \eqref{eq:SDE_cond}, and $q$ is a random variable with distribution $P_{IV}(\Re \delta+1,-2 \Im \delta)$ then the process $x_t+i y_t=T_q^{-1} \tl w_t$ satisfies the SDE \eqref{eq:HPsde} with $B_1, B_2$ being independent copies of standard Brownian motion. 
\end{theorem}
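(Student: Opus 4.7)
I would prove Theorem~\ref{thm:factor} via a Doob $h$-transform followed by a conformal change of variables. First, writing \eqref{eq:HPsde} in complex form with $\hat Z := B_2 + iB_1$ a standard complex Brownian motion gives
\[
dw = \Im w \cdot (d\hat Z - i\delta\, dt), \qquad w_0 = i,
\]
and since $d\hat Z \cdot d\hat Z = 0$, Itô's formula for a holomorphic $F$ reduces to $dF(w) = F'(w)\,dw$. By Theorem~\ref{thm:q} started from $i$, combined with the affine invariance $w \mapsto y_0 w + x_0$ of the SDE, the density of $w_\infty$ starting from a general point $w$ admits the compact form
\[
\log h(w;q) = \mathrm{const} + (2\Re\delta+1)\log(w-\bar w) - (1+\delta)\log(w-q) - (1+\bar\delta)\log(\bar w - q),
\]
and a direct second-order computation shows $\mathcal L h(\cdot; q) = 0$ for the generator $\mathcal L$ of $w$.

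Conditional on $w_\infty = q$, Doob's $h$-transform yields a diffusion with the same volatility but drift $-i\delta\, y + 2 y^2 \partial_{\bar w} \log h$. Using $\partial_{\bar w} \log h = -(2\Re\delta+1)/(w-\bar w) - (1+\bar\delta)/(\bar w - q)$, $w-\bar w = 2iy$, and the identity $iy(\bar w - q) - 2y^2 = iy(w-q)$, this drift simplifies to $b^{\mathrm{new}}_w = (1+\bar\delta)\,iy(w-q)/(\bar w - q)$. Applying the holomorphic map $T_q$, Itô then gives
\[
d\tilde w = T_q'(w)\,dw = \frac{q^2+1}{(q-w)^2}\bigl[y\, d\hat Z + b^{\mathrm{new}}_w\, dt\bigr].
\]
Using $\tilde y = (q^2+1)y/|q-w|^2$, the diffusion coefficient becomes $\tilde y \cdot (q-\bar w)/(q-w)$, a unit-modulus adapted rotation of $\hat Z$; by Lévy's characterization (since $d\hat Z \cdot d\hat Z = 0$), $d\tilde Z := \frac{q-\bar w}{q-w}\,d\hat Z$ is a standard complex Brownian motion. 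The drift part, after cancellations using $(q-w)(q-\bar w) = |q-w|^2$, collapses to exactly $i(1+\bar\delta)\tilde y$, yielding \eqref{eq:SDE_cond}.

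Because the conditional law of $\tilde w = T_{w_\infty}(w)$ given $w_\infty = q$ does not depend on $q$, the unconditional law of $\tilde w$ is \eqref{eq:SDE_cond} and $\tilde w$ is in fact independent of $w_\infty$, giving the first claim. For the converse, given an independent pair $(q, \tilde w)$ with $q \sim P_{IV}(\Re\delta+1, -2\Im\delta)$ and $\tilde w$ solving \eqref{eq:SDE_cond}, the pair has the same joint distribution as $(w_\infty, T_{w_\infty}(w))$ from the forward direction, and applying the continuous map $(q, \tilde w) \mapsto T_q^{-1}(\tilde w)$ recovers a process with the law of $w$. The main obstacle is the algebraic bookkeeping in the Doob-transform and conformal-change step: the distinction between $\delta$ in the original drift versus $\bar\delta$ in the target must be tracked carefully, and identifying $h$ as the genuine density (rather than an arbitrary positive harmonic function) must be justified either by direct verification via the affine invariance or by uniqueness of positive harmonic functions with the appropriate boundary behavior on $\R \cup \{\infty\}$.
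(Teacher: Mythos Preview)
Your proposal is correct and follows essentially the same route as the paper: both compute the Doob $h$-transform of the diffusion \eqref{eq:HPsde} conditioned on its endpoint (the paper states the resulting conditioned SDE directly, while you derive it from the explicit density via affine invariance and Theorem~\ref{thm:q}), then apply the M\"obius map $T_q$ via It\^o's formula to obtain \eqref{eq:SDE_cond}, and conclude independence from the fact that the conditional law is free of $q$. Your version simply makes explicit the algebra that the paper summarizes in the phrases ``one can show'' and ``one can readily check.''
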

\begin{proof}
By Theorem \ref{thm:q} the distribution of $w_\infty$ is given by $P_{IV}(\Re \delta+1,-2 \Im \delta)$.
 The SDE \eqref{eq:HPsde} is invariant under affine transformations of the form $z\to a+b z$ with $a\in \R, b>0$.  Hence for $a\in \R, b>0$ the solution of \eqref{eq:HPsde} with initial condition $a+i b$ will converge in distribution to $a+b w_\infty$ where $w_\infty\sim P_{IV}(\Re \delta+1,-2 \Im \delta)$. Now using either Doob's $h$-transform or the technique of enlargement of filtrations (c.f.~\cite{RogersWilliams}, or \cite{MatsumotoYor2001}) one can  show that for a given $r\in \RR\cup \{\infty\}$ the process $w$ conditioned on the event $\{w_\infty=r\}$ satisfies the diffusion
\begin{align}
  dz^{(r)}=\Im z^{(r)} \left(dZ+
i (1+\bar \delta)\frac{z^{(r)}-r}{\overline{z^{(r)}}-r} dt\right), \qquad z^{(r)}(0)=i.  
\end{align}
Here $Z$ is a standard complex Brownian motion, and in the $r=\infty$ case the $\frac{z^{(r)}-r}{\bar z^{(r)}-r}$ term in the drift is replaced by the constant one. In particular, $z^{(\infty)}$ has the same distribution as the process $\tl w$ from \eqref{eq:SDE_cond}, and it hits $\infty$ with probability one.
Using Ito's formula one can readily check that  for $r\in \R$ the rotated process $\tl w^{(r)}=T_r(z^{(r)})=\frac{rz^{(r)}+1}{r-z^{(r)}}$ satisfies the SDE     \eqref{eq:SDE_cond}, in particular, its distribution does not depend on $r$. 
This shows that the rotated process $t\to  T_{w_\infty} w_t$ has the same distribution as $\tl w$ from \eqref{eq:SDE_cond}, and that it is independent of  $w_\infty$. Using $w_\infty\sim P_{IV}(\Re \delta+1,-2 \Im \delta)$ the second half of the theorem follows as well.
\end{proof}

We will now construct a reversed and transformed version of $\Huaop$. Let $B_1,B_2$ be independent two-sided real Brownian motion. Consider the two-sided version of $x+iy$  from \eqref{eq:HPsde} defined using $B_1,B_2$, i.e., 
\begin{align}\label{eq:HPtwosided}
y_s= e^{B_2(s)-(\Re\delta+\frac12)s}, \quad 
x_s=\begin{cases}
-\int_s^0 y(t) dB_1 - \Im\delta\int_s^0 y(t)dt \quad&\mbox{$s\leq 0$},\\
\int_0^s y(t) dB_1  + \Im\delta\int_0^s y(t)dt \quad&\mbox{$s\geq 0$}.
\end{cases}
\end{align} 
We also introduce the time change 
\begin{equation*}
u_\beta(t)=-\tch_\beta(1-t)=\frac{4}{\beta}\log t.
\end{equation*}

\begin{definition}\label{def:HP_rev}
Let $q$ be a random variable with distribution $ P_{IV}(1+\Re\delta,-2\Im\delta)$ independent of $B_1,B_2$. Set $\hat x(t)+i\hat y(t)=x(u_\beta(t))+iy(u_\beta(t))$ for $t\in(0,1]$. Define the reversed and transformed version of 
the $\Huaop$ operator as 
\begin{equation*}
 \tau^{\ttHP}_{\beta,\delta}=\Dirop(\hat x+i \hat y, \mathfrak{u}_0,\mathfrak{u}_1), 
\end{equation*}
where $\uu_0=[1,0]^t, \uu_1=[-q,-1]^t$. 
\end{definition}

In this section we will use the simplified notation $\tau_{\beta, \delta}$ for $ \tau^{\ttHP}_{\beta,\delta}$, and denote 
the secular function of $\tau_{\beta, \delta}$ by $\zeta_{\beta, \delta}$.

\begin{lemma}\label{lem:HPtau}
%
The operator $\tau_{\beta,\delta}$ is orthogonal equivalent to an operator which has the same distribution as the $\Huaop$ operator. In particular,  the random analytic function $\zeta_{\beta,\delta}$  has the same distribution as $\zeta^{\ttHP}_{\beta, \delta}$. 
\end{lemma}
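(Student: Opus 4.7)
The strategy is to produce an orthogonally equivalent form of $\tau_{\beta,\delta}$ whose driving path and boundary conditions match those of $\Huaop$ in distribution; since both Lemma \ref{lem:rotation} and Lemma \ref{lem:timerev} preserve secular functions and integral traces, the conclusion for $\zeta_{\beta,\delta}$ then follows immediately. The crucial bridge is Theorem \ref{thm:factor}, which factors the one-sided diffusion $w$ solving \eqref{eq:HPsde} as $w = T_q^{-1}\tilde w$, with $\tilde w$ solving \eqref{eq:SDE_cond} and independent of $q\sim P_{IV}(\Re\delta+1,-2\Im\delta)$. Definition \ref{def:HP_rev} is engineered to package exactly the $(\tilde w,q)$ side of this factorization: the two-sided path realizes $\tilde w$ (up to a reflection) and the independent $P_{IV}$ random variable is inserted directly into $\uu_1$.

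The first step is a direct Ito calculation starting from the explicit formulas in \eqref{eq:HPtwosided}. Setting $u=-s\geq 0$, substituting $\hat B_i(u)=B_i(-u)$, and using that $B_1,B_2$ are independent (so the relevant quadratic covariations vanish), one verifies that the process $u\mapsto \iota\bigl[(x+iy)(-u)\bigr]$ satisfies the SDE \eqref{eq:SDE_cond} with $\tilde w_0=i$. Consequently $\hat z(t)=(x+iy)(u_\beta(t))$ is distributed as $\iota\tilde w(-u_\beta(t))$, independently of $q$.

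Next, I would apply Lemma \ref{lem:rotation} with the orthogonal matrix $Q_q$ corresponding to the hyperbolic isometry $T_q(z)=(qz+1)/(q-z)$, which fixes $i$ and sends $q\to\infty$, $\infty\to -q$. This converts $\uu_1$ to the ``limit point'' ratio $\infty$; combined with the previous step and the reverse direction of Theorem \ref{thm:factor} (applied to the independent pair $(\tilde w,q)$, using $T_q^{-1}=T_{-q}$), it identifies the rotated path distributionally as the reflected one-sided Hua--Pickrell diffusion. Finally, applying Lemma \ref{lem:timerev} converts the operator from the reversed to the forward framework: the intrinsic $\iota$-reflection in that lemma composes with the one from Step~1, the time change $u_\beta(1-t)=-\upsilon_\beta(t)$ aligns domains correctly, and the resulting operator has the path distributed as the driving path $\tilde x+i\tilde y$ of $\Huaop$ with boundary conditions exactly of the form $[1,0]^t$ and $[-q_\delta,-1]^t$, where $q_\delta$ plays the role of the random path limit.

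The main obstacle will be careful bookkeeping of reflections and sign conventions on the boundary-condition ratios through the composition of transformations; in particular, one must verify that the cumulative effect reproduces $\Huaop$'s structural coupling ``boundary condition ratio equals path limit.'' A useful consistency check, obtainable from an $S$-conjugation of $\Huaop$ together with $\iota w_\delta \ed w_{\bar\delta}$ and $-q_\delta\ed q_{\bar\delta}$, is the distributional symmetry $\Huaop \ed \mathtt{HP}_{\beta,\bar\delta}$, which absorbs any $\delta\leftrightarrow\bar\delta$ discrepancy arising from the various applications of $\iota$.
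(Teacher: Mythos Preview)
Your proposal is correct and follows essentially the same route as the paper: apply the hyperbolic rotation $T_q$ via Lemma~\ref{lem:rotation} (which sends the boundary ratios to $\infty$ and $-q$, or equivalently $q$ after the sign flip from Lemma~\ref{lem:timerev}), then apply the time reversal of Lemma~\ref{lem:timerev}, and use Theorem~\ref{thm:factor} to identify the resulting path distributionally with the driving path of $\Huaop$. The only cosmetic difference is where the reflection $\iota$ enters the distributional calculation: you observe first that $u\mapsto\iota[(x+iy)(-u)]$ solves \eqref{eq:SDE_cond} with the original parameter $\bar\delta$, whereas the paper notes that $u\mapsto(x+iy)(-u)$ solves \eqref{eq:SDE_cond} with drift $i(1+\delta)$ and then invokes Theorem~\ref{thm:factor} with parameter $\bar\delta$ and $-q\sim P_{IV}(\Re\delta+1,2\Im\delta)$; these are equivalent via the identity $\iota T_q=T_{-q}\iota$, so the $\delta\leftrightarrow\bar\delta$ bookkeeping closes without needing your fallback symmetry $\Huaop\ed\mathtt{HP}_{\beta,\bar\delta}$.
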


\begin{proof}
Recall the  transformations $\iota, S$ and $\rho$  defined in and around Lemma \ref{lem:timerev}. Let $T_q$ be the hyperbolic rotation defined in \eqref{def:hyprot}. Consider the Dirac operator 
\begin{align*}
\tl\tau =\rho^{-1}S\,\Dirop(T_q(\hat x +i \hat y),T_q\uu_0,T_q\uu_1)\,S\rho = \Dirop(\rho\iota T_q(\hat x+i \hat y),-T_q\uu_1,-T_q\uu_0).
\end{align*} 
 Here we identify the boundary condition $\uu=[a,b]^t$ with its projection $a/b$ onto the real axis so that $T_q \uu_0$, $T_q \uu_1$ are well defined:
 \begin{align*}
    -T_q\uu_1=\infty, \qquad -T_q\uu_0=q.
 \end{align*}
 By Lemmas \ref{lem:timerev} and \ref{lem:rotation} the operator $\tl \tau$ is orthogonal equivalent to $\tau_{\beta,\delta}$, hence we just have to  show that $\tl\tau$ has the same distribution as $\Huaop$.

Note that $T_q=T_{-q}^{-1}$ and $-q\sim P_{IV}(\Re\delta+1,2\Im\delta)$. 
From the definition \eqref{eq:HPtwosided} it follows that the reversed process $(x_{-s}+iy_{-s}), s\ge 0$ satisfies the SDE \eqref{eq:SDE_cond} with drift $i(1+\delta)$ in place of $i(1+\bar\delta)$. Hence by Theorem \ref{thm:factor}, the process $T_q(x_{-s}+iy_{-s}), s\ge 0$ satisfies the SDE 
\begin{align*}
dw = \Im w(dZ-i\bar{\delta}ds),\qquad w(0)=i,
\end{align*}
with  standard complex Brownian motion $Z$, and the path converges to $T_q \infty=-q$ as $s\to\infty$. From this it follows that
\begin{align*}
	\rho\iota T_q(x_{u(\cdot)}+iy_{u(\cdot)}) \ed \rho (x_{-u(\cdot)}+iy_{-u(\cdot)})= (x_{\upsilon_\beta(\cdot)}+iy_{\upsilon_\beta(\cdot)}),
\end{align*}
with $\lim_{t\to 1}\rho\iota T_q(x_{u(t)}+iy_{u(t)})=q$.
 This shows that the driving path and boundary conditions of $\tl \tau$ match up (in distribution)  with the corresponding ingredients of the $\Huaop$ operator, proving the statement of the lemma. 
\end{proof}

The independence of the boundary point and the driving path in the reversed operator  $\tau_{\beta,\delta}$ allows us to prove Theorem \ref{thm:zetaHP}.
Our proof follows the proof of Theorem 1 of \cite{BVBV_szeta}, which can be considered the $\delta=0$ case of our theorem.

\begin{proof}[Proof of Theorem \ref{thm:zetaHP}]
	By Lemma \ref{lem:HPtau} the random analytic function $\zeta_{\beta,\delta}$ has the same distribution as $\zeta^{\ttHP}_{\beta, \delta}$. Hence  we can work with the reversed operator $\tau_{\beta,\delta}$, and prove the statements of the theorem for $\zeta_{\beta,\delta}$.
	
	By Proposition 13 in \cite{BVBV_szeta} the secular function of $\tau_{\beta,\delta}$ can be characterized as follows. 
	Let $R(t)$ be the weight function built from the driving path of the reversed $\tau_{\beta,\delta}$ operator according to \eqref{eq:R}. Then there exists a unique function $H:(0,1]\times \CC\mapsto\CC^2$ so that for every $z\in\CC$ the function $H(\cdot,z)$ solves the ODE
	\begin{align}\label{eq:HP_H}
	J\frac{d}{dt}H(t,z) = zR(t)H(t,z),\qquad \lim_{t\to 0}H(t,z) = \uu_0=[1,0]^t.
	\end{align}
	The secular function $\zeta_{\beta,\delta}$ can be obtained from $H$ using the formula $\zeta_{\beta,\delta}(z)=[1,-q]H(1,z)$.
	
	Consider the process $X_u = \begin{pmatrix}1 &-x_u\\0&y_u\end{pmatrix},u\le 0$, where $x_u+iy_u$ is defined in \eqref{eq:HPtwosided}. Define $\cH_u(z)=X_uH(t(u),z)$ with $t(u)=e^{\frac{\beta}{4}u}$ being the inverse of $u(t)=\tfrac{4}{\beta}\log t$. 
	Since $X_0=\mat{1}{0}{0}{1}$, we have  $\zeta_{\beta,\delta}(z)=[1,-q]\cH_0(z)$.  A direct computation using  It\^o's formula  shows that $\cH_u$ solves the SDE \eqref{eq:HP_cH}. To be precise, one first has to consider approximations  of $\cH_u$ that are defined on $[\eps, 1]$, for this one has to use the approximation method introduced in Propositions 20 and 43 in \cite{BVBV_szeta}. A simple extension of those arguments also shows the characterization of $\cH_u(z)$ as the unique solution of \eqref{eq:HP_cH} with the conditions given. 
	
	Now write $\cH_u=[\cA_u,\cB_u]^t$. The functions $\cA_u, \cB_u$ are entire functions on $\CC$, we denote their Taylor coefficients at 0 by $\cA^{(n)}_{u}, \cB^{(n)}_{u}$.
	Since the SDE system \eqref{eq:HP_cH} depends analytically on its parameter $z$, It\^o's formula can be applied to get SDEs for derivatives in this parameter as well, see e.g.~Section V.7 of \cite{Protter}. 
	Differentiating \eqref{eq:HP_cH} $n$ times in $z$ and considering $z=0$ shows that the Taylor coefficients $\cA^{(n)},\cB^{(n)}$ satisfy the following system of SDEs
	\begin{align*}
	d\cB^{(n)} &= \cB^{(n)} dB_2 -\Re\delta \cB^{(n)} du -\frac{\beta}{8}e^{\beta u/4}\cA^{(n-1)}du,\\
	d\cA^{(n)} &= -\cB^{(n)} dB_1 -\Im\delta \cB^{(n)} du +\frac{\beta}{8}e^{\beta u/4}\cB^{(n-1)}du,
	\end{align*} 
	with initial conditions $\cB^{(0)}\equiv 0$, $\cA^{(0)}\equiv 1$. Mimicking the proof of Propositions 45 and 47 in \cite{BVBV_szeta} one can prove that the solution of the above system exist, and it is given by equations \eqref{eq:HP_taylor}, \eqref{eq:HP_taylor_2}.
\end{proof}

Using the SDE characterization of $\zeta^{\ttHP}_{\beta, \delta}$ given in Theorem \ref{thm:zetaHP} we are able to prove Theorem \ref{thm:KSsde}.

\begin{proof}[Proof of Theorem \ref{thm:KSsde}]
As in the proof of Theorem \ref{thm:zetaHP}, we work with the operator $\tau_{\beta,\delta}$. The spectrum of this operator has the same distribution as the $\HPb$ process.

Consider the random analytic function valued processes $\cA_u, \cB_u$ introduced in the proof of Theorem \ref{thm:zetaHP}. Recall
 that $\zeta_{\beta,\delta}=[1,-q]\cH_0=\cA_0-q\cB_0$, with $q$ given in the definition of $\tau_{\beta, \delta}$, see Definition \ref{def:HP_rev}. 
 
 We introduce the structure function $\cE(u,z)=\cA_u(z)-i\cB_u(z)$, note that this can also be expressed as $[1,-i]\cH(u,z)$ with $\cH_u$ defined in the proof of Theorem \ref{thm:zetaHP}.
 For $\lambda\in\R$ we define $2\log\cE(u,\lambda)=\mathcal{L}_\lambda(u)+i\alpha_\lambda(u)$ with $\mathcal{L}_\lambda, \alpha_\lambda \in \R$, where for each $u\in \R$ the function is chosen so that it is continuous in $\lambda$ and $\alpha_0(u)=0$. (This is possible because $\cH_u(z)$ is continuous in $z$ and it is never equal to $[0,0]^t$.)
By \eqref{eq:HP_cH} and It\^o's formula we get
\begin{align}\label{eq:alphaSDE_2}
	d\alpha_\lambda = \lambda\tfrac{\beta}{4}e^{\frac{\beta}{4}u} du + \Re[(e^{-i\alpha_\lambda}-1)(dZ-i\delta du)], \qquad \alpha_\lambda(-\infty)=0. 
\end{align}
The process $\psi_\lambda(t)=\alpha_\lambda(u(t))$ with $u(t)=\tfrac4{\beta} \log t$ satisfies the SDE \eqref{eq:HP_psi_sde}, and simple coupling arguments show that it is the unique solution of \eqref{eq:HP_psi_sde} with the conditions given in Theorem \ref{thm:KSsde}. (See e.g~\cite{KS} for more details in the $\delta=0$ case.)

Set $\theta = -2\arccot q$. By the comment following Theorem \ref{thm:q}  we have  $e^{i \theta}\sim \Theta(1,\delta)$, and $\theta$ is independent of the complex Brownian motion $Z$ in \eqref{eq:alphaSDE_2}.
The eigenvalues of $\tau_{\beta,\delta}$ are given by the zeros of $\zeta_{\beta, \delta}$. By definition we have $\zeta_{\beta, \delta}(\lambda)=0$ if and only if $\cE(0,\lambda)$ is a real multiple of $q-i$, or equivalently $\alpha_\lambda(0)=\psi_{\lambda}(1)=2\log (q-i)=\theta$ mod $2\pi$. Using $\spec(\tau_{\beta,\delta})\ed\HPb$ finishes the proof.
\end{proof}

Now we turn to the proof of Theorem \ref{thm:HP_2piZ}. 
We first isolate the statements regarding the SDE \eqref{eq:HP_alpha_sde} in a separate lemma. 

\begin{lemma}\label{lem:HP_alpha_sde}
The SDE system \eqref{eq:HP_alpha_sde} has a  unique strong solution on $t\in [0,\infty)$, $\lambda\in \R$.
With probability one the process $\lambda\to \alpha_\lambda(t)$ is increasing for all $t>0$. 
For each $\lambda\in \R$ the limit $\lim\limits_{t\to \infty}\frac{1}{2\pi} \alpha_\lambda(t)$ exists almost surely and it is an integer.  
Moreover, if $\beta\le 4(\Re\delta+\frac12)$ and $\lambda>0$ then a.s.~$\frac{1}{2\pi} \alpha_\lambda(t)$ converges to an integer from above. 
\end{lemma}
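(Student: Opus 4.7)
The plan is to treat the four assertions in order, working from the real form of the SDE,
\begin{align*}
d\alpha_\lambda=\bigl(\lambda\tfrac{\beta}{4}e^{-\beta t/4}+\Im\delta(\cos\alpha_\lambda-1)-\Re\delta\sin\alpha_\lambda\bigr)dt+2\sin(\alpha_\lambda/2)\,dW,
\end{align*}
obtained by writing $\Re[(e^{-i\alpha_\lambda}-1)dZ]=2\sin(\alpha_\lambda/2)\,dW$ for a suitable real Brownian motion $W$ (the identity recorded right after Theorem \ref{thm:HP_2piZ}). All drift and diffusion coefficients are globally Lipschitz and bounded in $\alpha$, uniformly in $t$, so strong existence and uniqueness, as well as joint continuity of $(\lambda,t)\mapsto\alpha_\lambda(t)$, follow from classical SDE theory together with Kolmogorov moment estimates on differences $\alpha_{\lambda_2}-\alpha_{\lambda_1}$, in direct analogy with the $\Sineb$ case of \cite{KS} and \cite{BVBV}.

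For monotonicity in $\lambda$ I would couple $\alpha_{\lambda_1}$ and $\alpha_{\lambda_2}$ with $\lambda_1<\lambda_2$ through the same $Z$ and analyse $D=\alpha_{\lambda_2}-\alpha_{\lambda_1}$. The sum-to-product identities $\cos\alpha_2-\cos\alpha_1=-2\sin(\tfrac{\alpha_1+\alpha_2}{2})\sin(D/2)$ and $\sin\alpha_2-\sin\alpha_1=2\cos(\tfrac{\alpha_1+\alpha_2}{2})\sin(D/2)$ let me rewrite the SDE for $D$ in the form
\begin{align*}
dD=(\lambda_2-\lambda_1)\tfrac{\beta}{4}e^{-\beta t/4}dt+\sin(D/2)\bigl(F_t\,dt+G_t\,dW\bigr),
\end{align*}
with bounded progressively measurable $F_t,G_t$. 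Since both corrections vanish to first order at $D=0$ while the separation drift is strictly positive, a Yamada-Watanabe comparison argument (as in the $\Sineb$ analysis of \cite{KS}) gives $D(t)>0$ for all $t>0$ almost surely; promoting this to simultaneous monotonicity of $\lambda\mapsto\alpha_\lambda(t)$ for all $\lambda$ uses the joint continuity.

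The core of the proof is convergence to $2\pi\Z$. My plan is to apply It\^o to the Lyapunov function $V(\alpha)=-\log\sin^2(\alpha/2)$; using the identities $\cot(\alpha/2)(1-\cos\alpha)=\sin\alpha$ and $\cot(\alpha/2)\sin\alpha=1+\cos\alpha$ one obtains
\begin{align*}
dV=\bigl(1+2\Re\delta\cos^2(\alpha/2)+\Im\delta\sin\alpha-\cot(\alpha/2)\cdot\lambda\tfrac{\beta}{4}e^{-\beta t/4}\bigr)dt-2\cos(\alpha/2)\,dW.
\end{align*}
The exponential $\lambda$-term contributes a finite-variation piece that is integrable in $t$, the remaining drift tends to $1+2\Re\delta>0$ as $V\to\infty$ (equivalently $\alpha\to 2\pi\Z$), and the martingale has bounded quadratic variation rate $4\cos^2(\alpha/2)\le 4$. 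A stochastic comparison with a Brownian motion plus drift then gives $V(t)\to\infty$ a.s., so the distance from $\alpha_\lambda(t)$ to $2\pi\Z$ vanishes. To upgrade this to convergence to a single integer multiple of $2\pi$, I would argue that once $V$ exceeds a deterministic threshold the process $\alpha_\lambda$ cannot cross the barriers in $\pi+2\pi\Z$, where $\sin^2(\alpha/2)=1$: a Borel-Cantelli argument applied to the drift and martingale parts of $V$ rules out infinitely many excursions from $V$-large back to $V=0$.

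For the last assertion, when $\lambda>0$ the positive $\lambda$-drift drives $\alpha_\lambda$ into arbitrarily high values eventually, so the integer limit is positive. Under the additional hypothesis $\beta\le 4(\Re\delta+\tfrac12)$ the restoring drift $-\Re\delta\sin\alpha$ near any $2\pi k$ dominates the multiplicative noise $2\sin(\alpha/2)\,dW$ in the Feller boundary-classification sense. I would make this precise by applying It\^o to a M\"obius transform such as $\cot((\alpha-2\pi k)/4)$ and verifying that the transformed process has an entrance boundary at infinity exactly under the stated inequality. It then follows that $\alpha_\lambda(t)$ cannot cross any $2\pi k$ downward in finite time once it has passed above it, yielding convergence from above.

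The main obstacle, in my view, is the no-recurrence step in the general convergence argument: ruling out infinite crossings of $\pi+2\pi\Z$ by $\alpha_\lambda(t)$, which the Lyapunov estimate alone does not provide and which requires careful tail control of the $V$-martingale using its quadratic variation.
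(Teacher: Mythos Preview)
Your treatment of existence, uniqueness and monotonicity is fine and matches the paper. The main gap is in the Lyapunov step. In your equation for $V=-\log\sin^2(\alpha/2)$ the term $-\cot(\alpha/2)\,\lambda\tfrac{\beta}{4}e^{-\beta t/4}$ is not ``a finite-variation piece that is integrable in $t$'': since $|\cot(\alpha/2)|=|\cos(\alpha/2)|\,e^{V/2}$, this term has magnitude of order $e^{V/2-\beta t/4}$, and if your argument succeeds with $V(t)\sim(1+2\Re\delta)t$ then the exponent is $(\tfrac12+\Re\delta-\tfrac{\beta}{4})t$, which is nonnegative exactly in the limit-point regime $\beta\le 4(\Re\delta+\tfrac12)$. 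Worse, the sign of this term is $-\operatorname{sgn}\cot(\alpha/2)$, so when $\alpha$ sits just above $2\pi k$ (the ``from above'' scenario) the $\lambda$-term is large and \emph{negative}, pushing $V$ down rather than up. So the Lyapunov inequality you need simply fails in the regime that matters, and the no-recurrence difficulty you flag is a symptom of this: $V$ alone cannot see on which side of $\pi+2\pi\Z$ the process sits.

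The paper avoids all of this by first recording the barrier property (for $\lambda>0$, once $\alpha_\lambda>2\pi m$ it stays there) and then applying It\^o to $X=\log\tan(\alpha/4)$, which bijects each period $(2\pi k,2\pi(k+1))$ onto $\R$. The resulting SDE has \emph{additive} noise $dW$, a bounded $\tanh X$ drift with sign $\operatorname{sgn}X$, a bounded $\operatorname{sech}X$ term, and a $\lambda$-term $(\lambda\beta/8)e^{-\beta t/4}\cosh X$ that is always \emph{positive}. One then shows $X\to\pm\infty$ by elementary coupling with Brownian motion plus constant drift on the regions $\{X\ge M\}$ and $\{X\le -M,\ t\ge cM\}$; the blowup/restart of $X$ exactly encodes upward crossings of $2\pi\Z$, so convergence to a single integer comes for free. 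Your proposed transform $\cot((\alpha-2\pi k)/4)$ for the last assertion is essentially this $X$, so you already had the right object---use it for the whole convergence argument, not just the endgame. Finally, note that the barrier property holds for all $\beta$, so it cannot by itself give ``from above''; what is needed when $\beta\le 4(\Re\delta+\tfrac12)$ is to rule out $X\to+\infty$ (i.e.\ $\alpha\to 2\pi(k+1)^-$), which the paper does by adapting the $\delta=0$ argument of \cite{BVBV} after observing that the $\Im\delta\,\operatorname{sech}X$ term is negligible for $|X|$ large. Also, the limiting integer need not be positive for $\lambda>0$; it is only nonnegative.
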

Note that for $\delta=0$ these statements were proved in Theorem 7 and Proposition 9 of \cite{BVBV}.

\begin{proof}
The fact that   the  system \eqref{eq:HP_alpha_sde} has a  unique strong solution follows from  standard theory, the monotonicity property is a consequence of the monotone dependence of the drift function  of the parameter $\lambda$. 

For a fixed $\lambda\in \R$ the process $\alpha_\lambda$ solves the SDE  
	\begin{align}\label{eq:SDE_1234}
	d\alpha_{\lambda} = \lambda \tfrac{\beta}{4}e^{-\frac{\beta}{4}t}dt + (\Im\delta(\cos\alpha_{\lambda}-1)-\Re\delta\sin\alpha_{\lambda})dt + 2\sin(\tfrac{\alpha_{\lambda}}{2})dW, \quad \alpha_{\lambda}(0)=0,
	\end{align}
 where $W$ is a standard real Brownian motion depending on $\lambda$.    
    
	For $\lambda=0$ we  have $\alpha_\lambda(t)=0$. It is sufficient to show the statement for $\lambda>0$, since $-\alpha_{-\lambda}$ solves the same SDE as $\alpha_\lambda$ with $\bar \delta$.  From the monotonicity in $\lambda$ it follows that for $\lambda>0$ we have  $\alpha_\lambda(t)>0$ for $t>0$ almost surely, and if $t_0>0$, $m\in \ZZ$ then on the event $\alpha_\lambda(t_0)>2 m \pi$ one has $\alpha_\lambda(t)>2m \pi$ for all $t>t_0$ with probability one. (See Proposition 9 in \cite{BVBV} for the proof of these statements in the $\delta=0$ case.)
	
	Fix $\lambda>0$, and introduce the  diffusion
	\begin{align*}
	 X(t) =   \begin{cases}
	 \log(\tan(\alpha_\lambda(t)/4)), \qquad &\text{if } \alpha_\lambda(t)\in[4k\pi, (4k+2)\pi),\\
	 -\log(-\tan(\alpha_\lambda(t)/4)), \qquad &\text{if }  \alpha_\lambda(t)\in[(4k+2)\pi, (4k+4)\pi).
	\end{cases} 
	\end{align*}
	By It\^o's formula, this diffusion  satisfies the SDE
	\begin{align}\label{eq:Xsde}
	dX = \tfrac{\lambda\beta}{8}e^{-\beta t/4}\cosh Xdt + (\Re\delta+\tfrac12) \tanh X_i dt -\Im\delta\sech Xdt+ dW,\,\, X(0)=-\infty,
	\end{align}
	with a $W$ standard Brownian motion that is a simple transformation of the $W$ from \eqref{eq:SDE_1234}. Note that the diffusion might blow up to $\infty$ in finite time, in which case it restarts immediately from $-\infty$. To prove the convergence statement for $\tfrac{1}{2\pi} \alpha_\lambda(t)$ we need to show that with probability one $\lim\limits_{t\to \infty} X(t)$ exists and it is an element of $\{-\infty,\infty\}$. 
	 This can be proved with fairly straightforward coupling arguments, we will only give a sketch of the proof.
	
	For given $t_0>0, x\in \RR$ we can consider the solution of \eqref{eq:Xsde} on $[t_0,\infty)$ with $X(t_0)=x$. We denote the distribution of the process by $P_{t_0, x}$.

	Denote the drift term in the SDE \eqref{eq:Xsde} by 
	\begin{equation*}
	R(x,t) =\tfrac{\lambda\beta}{8}e^{-\beta t/4}\cosh x+(\Re\delta+\tfrac12)\tanh x - \Im\delta\sech x.
	\end{equation*}
	Note that when $|x|\leq 2M$, the function $|R(x,t)|$ could be  bounded from above by a constant $c=c(M,\delta,\beta,\lambda)$ that is independent of  $t$. By coupling $R$  with a Brownian motion with drift  $c$, it follows that for any fixed $M>0$ there is an $\eps\in (0,1)$ so that 
	\[
	\sup_{t_0>0, |x|\le M} P_{t_0, x}\left(|X(t)|\le M \text{ for all } t\in [t_0,t_0+1]\right)\le 1-\eps.
	\]
	Using the strong Markov property it now  follows that for any $t_0>0$, $x\in [-M,M]$ we have 
	\begin{align}\label{eq:X_111}
	    	P_{t_0,x}\left(|X(t)|\le M \text{ for all } t\ge t_0\right)=0.
	\end{align}
We will show that there is a positive  constant $c_1$, so that 
\begin{align}\label{eq:X_222}
\lim_{M\to \infty}\, \inf_{\substack{t_0\ge c_1 M\\ |x|\ge M}} P_{t_0, x}(\lim_{t\to \infty} X(t)\in \{-\infty,\infty\})=1.
\end{align}
This statement together with \eqref{eq:X_111} implies that with probability one $\lim_{t\to \infty} X(t)\in \{-\infty, \infty\}$.

Fix $x\ge M$, $t_0>0$. 	 For any fixed $0<c_+<\Re\delta+\frac12$, we could choose $M$ large so that $R(x,t)\geq  c_+$ for all $x\geq M/2, t\ge 0$. Under the  distribution $P_{t_0,x}$, the coupling \[
 X(t)-M\geq W_{c_+}(t_0,t):= W(t)-W(t_0)+c_+(t-t_0)
 \]
 holds on $[t_0,\sigma]$ where
	\begin{align*}
		\sigma:=\inf_{t\ge t_0}\{X(t-)=\infty \text{ or } W_{c_+}(t_0,t)\le -M/2\}.
	\end{align*}
	Since $c_+>0$, the random variable $-\inf_{t\geq t_0} W_{c_+}(t_0,t)$ is distributed as an exponential random variable with parameter $2c_+$ (see e.g.~\cite{MY_PS2}). Thus,
	\begin{align*}
	P_{t_0,x}(W_{c_+}(t_0,t)> -\tfrac{M}{2},\forall  t\ge t_0) =1-e^{-c_+M}.
	\end{align*}
	Using the sublinearity  of Brownian motion we get that
	\begin{align}\label{eq:lim_Xinfty}
	\inf_{\substack{t_0> 0\\ x\ge M}} P_{t_0,x}(\lim_{t\to\infty} X(t)=\infty \text{ or } X(t)\text{ blows up in finite time})\ge 1-e^{-c_+M}.
	\end{align}
	
	Next we fix the constants $c_{-}, c_2$ with $0<c_{-}<c_2<\min\{\Re\delta+\frac12,\frac{\beta}{4}\}$, and fix $t_0\ge 2c_2^{-1} M$, $x_0\le -M$. The bound $R(x,t)\le -c_{-}$ holds in the region 
	\[
	\mathcal{R}:=\{(t,x):-M/2\ge x\ge -c_2 t, t\ge t_0\},
	\]
	if $M$ is larger than a fixed constant that only depends on $\lambda, \delta$ and $\beta$. 
	Thus under $P_{t_0,x_0}$ we can couple $X(t)-x_0$ on $[t_0,\infty)$ from above with the process \[
	W_{-c_{-}}(t_0,t):=W(t)-W(t_0)-c_{-}(t-t_0),\]
	on the event that $(t,-M+W_{-c_{-}}(t_0,t))$ stays in the region $\mathcal{R}$. Note that by our assumption $(t_0,-M+W_{-c_{-}}(t_0,t_0))\in \mathcal{R}$. Note that both
	\[\sup_{t\ge t_0} W_{-c_{-}}(t_0,t)\quad \text{and} \quad -\inf_{t\ge t_0} W_{-c_{-}}(t_0,t)+c_2(t-t_0)
	\]
	are exponentially distributed, with parameters $2c_{-}$ and $2(c_2-c_-)$, respectively. Hence the probability of $(t,-M+W_{-c_{-}}(t_0,t))$ not staying in the region $\mathcal{R}$ is exponentially small in $M$. Since $-M+W_{-c_{-}}(t_0,t)$ converges to $-\infty$ as $t\to \infty$, we get 
	\begin{align}\label{eq:lim_Xneginfty}
	\lim_{M\to \infty} \inf_{\substack{t_0> 2 c_2^{-1} M\\ x_0\le -M}} P_{t_0,x_0}(\lim_{t\to\infty} X(t)=-\infty )=1.
	\end{align}
From \eqref{eq:lim_Xinfty} and \eqref{eq:lim_Xneginfty} we get \eqref{eq:X_222}, which implies that a.s.~$X$ converges to either $\infty$ or $-\infty$.
		
	In the case $\beta\le 4(\Re\delta+\frac12)$, the $\Huaop$ operator is limit point at $t=1$. In this case, for $\lambda>0$ one can show that the limit of $X(t)$ has to be $-\infty$. This generalizes Theorem 7 of \cite{BVBV} which proves the statement for $\delta=0$. 
	The idea is that for any fixed $\delta$ with $\Re \delta+1/2>0$ one can choose $M$ large so that the term $-\Im\delta\sech x$ in $R(x,t)$ is negligible on the event $\{X(t)\ge M \text{ for } t\ge t_0\}$. After dropping that term, one can just mimic  the proof of the $\delta=0$ case from Theorem 7 of \cite{BVBV}.
	 This proves that a.s.~$X$ converges to $-\infty$ when $\beta\le 4(\Re\delta+\frac12)$ and hence a.s.~$\alpha_\lambda$ converges from above for any fixed $\lambda>0$.
\end{proof}

We now have all the ingredients to prove Theorem \ref{thm:HP_2piZ}. 

\begin{proof}[Proof of Theorem \ref{thm:HP_2piZ}]
The statements about the SDE \eqref{eq:HP_alpha_sde} are proved in Lemma \ref{lem:HP_alpha_sde}. The rest of the proof will follow along the lines of the proof of Theorem 26 in \cite{BVBV_op}, where the $\delta=0$ case is handled.

Consider the operator $\Huaop$ defined in Proposition \ref{prop:hpoperator}. Let $v=v_\lambda=[v_1,v_2]^t$ be the solution of the differential equation $\Huaop v=\lambda v$ with $v(0)=[1,0]^t$. Then the ratio of the two components $r_{\lambda}(t)=\frac{v_1(\lambda,t)}{v_2(\lambda,t)}$ satisfies the  ODE
	\begin{align}\label{eq:r_ODE}
	r_\lambda'=\lambda\tfrac{\tl y^2+(\tl x-r_\lambda)^2}{2\tl y},
	\end{align}
with initial condition $r_\lambda(0)=\infty$.
Consider the hyperbolic angle $\tl\alpha_{\lambda}=\tl\alpha_{\lambda,\delta}$ between the points $\infty, \tl x+i \tl y, r_\lambda$, this is given by   $\tl\alpha_{\lambda}=2\arccot(\frac{\tl x-r_\lambda}{\tl y})$. More precisely, we can define a ``lifted'' version of this function on $\R$ that is continuous in $\lambda$ and $t$, satisfies $\tl \alpha_\lambda(0)=0$ and $\cot(\tl \alpha_\lambda/2)=\tfrac{\tl x-r_\lambda}{\tl y}$.

By It\^o's formula, together with a change of variable $\alpha_{\lambda}(t)  = \tl\alpha_{\lambda,\delta}(e^{-\beta t/4})$,  we get the SDE system
	\begin{align*}
	d\alpha_{\lambda} = \lambda \tfrac{\beta}{4}e^{-\frac{\beta}{4}t}dt +\Re[(e^{-\im \alpha_{\lambda}}-1)(dZ-\im\delta dt)],\quad\quad \alpha_{\lambda}(0)=0.
	\end{align*}

	Let $N(\lambda)$ be the right-continuous version of the limit $\lim\limits_{t\to\infty}\frac{1}{2\pi}\alpha_\lambda(t)$. It remains to prove that $N(\cdot)$ has the same distribution as the counting function of the spectrum of the $\Huaop$ operator. The proof relies on the oscillation theory of Dirac operators, see Section 4 in \cite{BVBV_op}, and it can be done exactly the same way as in Theorem 26 in \cite{BVBV_op}. The only ingredients that are needed to cover the general $\Re \delta+1/2>0$ case are the following: (1) the right endpoint of the $\Huaop$ operator is limit point if $\beta\le 4 (\Re \delta +1/2)$ and limit circle otherwise (see Proposition 31 in \cite{BVBV_op}), and (2) for $\beta\le 4 (\Re \delta +1/2)$ in the $\lambda>0$ case $\alpha_\lambda(t)$ converges to its limit from above a.s.~by Lemma \ref{lem:HP_alpha_sde}.
\end{proof}

\subsection{Proofs of the theorems related to $\Bessop$}

\begin{proof}[Proof of Theorem \ref{thm:zetaB}]
	It will be more convenient to work with a time reversed version of the operator $\Bessop$. Let $y(u)=\exp(-\frac{\beta}{4}(2a+1)u+B(2u))$ and $\hat y(t)=y(u_\beta(t))$ with $u_\beta(t)=\frac{4}{\beta}\log t$. We consider the reversed Dirac operator
	\begin{align*}
		\tau_{\beta,a}^{\ttB} = \Dirop(i\hat y(t), \uu_0, \uu_1),\qquad t\in(0,1],
	\end{align*}	
	where $\uu_0 = [1,0]^t, \uu_1=[0,-1]^t$. Within this proof we use the simplified notation	$\tau_{\beta,a}$ for $\tau_{\beta,a}^{\ttB}$, and denote the secular function of $\tau_{\beta,a}$  by $\zeta_{\beta,a}$.
	By the symmetry of $\Bessop$, Lemmas \ref{lem:timerev} and \ref{lem:rotation}, we have
	\begin{align*}
	\rho J\tau_{\beta,a}J\rho^{-1} {\,\,\buildrel d\over =\,\,} \Bessop.
	\end{align*}
	Hence $\tau_{\beta,a}$ is orthogonal equivalent to $\Bessop$,  its eigenvalues have the same law of the $\BB_{\beta,a}$ process, and  $\zeta_{\beta, a}^{\ttB}$ has the same distribution as  $\zeta_{\beta,a}$.

	The statement about the Taylor expansion of $\zeta_{\beta,a}$
 follows from Proposition 9 in \cite{BVBV_szeta}, which shows that the $n$th Taylor coefficient of $\zeta_{\beta,a}$ can be evaluated using the multiple integral
	\begin{align*}
	 -\!\!\iiint\limits_{0<s_1<s_2<\cdots<s_n\le 1} \uu_0^t R(s_1)JR(s_2)J\cdots R(s_n)\uu_1 ds_1\cdots ds_n,\quad R(s) =\frac12\begin{pmatrix}
	\hat y(s)^{-1}& 0\\ 0 &\hat y(s)
	\end{pmatrix}.
	\end{align*}
Noting that the multiple integral is 0 when $n$ is odd, the statement about the Taylor expansion of $\zeta_{\beta,a}$ follows. 
 
The SDE representation of $\zeta_{\beta,a}$ can be shown similarly as the analogue statement for $\zeta^{\ttHP}_{\beta, \delta}$.
	By Proposition  13 in \cite{BVBV_szeta}, we have $\zeta_{\beta,a}(z) = [1,0]H(1,z)$, where $H:(0,1]\times \CC\mapsto\CC^2$ is the unique function that solves the ODE
	\begin{align*}
	J\frac{d}{dt}H(t,z) = zR(t)H(t,z),\qquad \lim_{t\to 0}H(t,z) = \uu_0.
	\end{align*}
	Introduce $X_u = \begin{pmatrix}1 &0\\0&y(u)\end{pmatrix},u\le 0$. Then we have $\zeta_{\beta,a}(z)=[1,0]\cH_0(z)$ where $\cH_u(z)=X_uH(e^{\frac{\beta}{4}u},z)$. The fact that $\cH$ satisfies the SDE \eqref{eq:B_cH}  can be checked using It\^o's formula and an adaptation of the approximating scheme described in Propositions 20 and 43 in \cite{BVBV_szeta}. 

Note that the Taylor coefficients of $\zeta_{\beta, a}$ can also be expressed by differentiating the SDE \eqref{eq:B_cH} and solving the resulting system of SDEs. This gives another way to derive \eqref{eq:B_Taylor}.
\end{proof}

\noindent\textbf{Acknowledgements.} 
The authors thank  B\'alint Vir\'ag for valuable discussions.  B.~V.~was partially supported by the NSF award DMS-1712551.

\def\cprime{$'$}

{{  
\bigskip
  \footnotesize

\noindent Yun Li, \textsc{Department of Mathematics, University of Wisconsin -- Madison, 480 Lincoln Dr,
Madison, WI 53706},  \texttt{li724@wisc.edu}

  \medskip

\noindent Benedek Valk\'o, \textsc{Department of Mathematics, University of Wisconsin -- Madison, 480 Lincoln Dr,
Madison, WI 53706}, \texttt{valko@math.wisc.edu}

}}

\end{document}